\documentclass[reqno]{amsart}

\usepackage[mathscr]{euscript}
\usepackage{amssymb}
\usepackage{graphicx}
\usepackage[shortlabels]{enumitem}
\usepackage{mathtools}
\usepackage{booktabs}
\usepackage{tikz}
\usepackage{tikz-cd}
\usetikzlibrary{positioning,cd,arrows, patterns}
\usepackage{caption}
\usepackage{subcaption}
\usepackage{ragged2e}
\usepackage{a4wide}
\usepackage{hyperref}
\hypersetup{colorlinks}
\usepackage{amsbsy}
\usepackage{xargs}
\usepackage{array}
\usepackage{float}

\hypersetup{
    unicode=false,          
    pdftoolbar=true,        
    pdfmenubar=true,        
    pdffitwindow=false,     
    pdfnewwindow=true,      
    colorlinks=true,       
    linkcolor=blue,          
    citecolor=red,        
    filecolor=violet,      
    urlcolor=violet       
}

\captionsetup[table]{skip=10pt}
\captionsetup[subfigure]{labelformat=simple}


\theoremstyle{plain}
\newtheorem{theorem}{Theorem}[section]
\newtheorem{lemma}[theorem]{Lemma}
\newtheorem{proposition}[theorem]{Proposition}
\newtheorem{corollary}[theorem]{Corollary}

\theoremstyle{remark}
\newtheorem{remark}[theorem]{Remark}

\theoremstyle{definition}
\newtheorem{definition}[theorem]{Definition}
\newtheorem{assumption}[theorem]{Assumption}
\newtheorem{example}[theorem]{Example}



\newcommand\C{\mathbb{C}}

\newcommand\Z{\mathbb{Z}}


\newcommand\bfx{\mathbf{x}}

\newcommand\cA{\mathcal{A}}



\newcommand{\clusterfont}{\mathcal}
\newcommand{\dynkinfont}{\mathsf}

\newcommand{\quiver}{\clusterfont{Q}}
\newcommand{\qbasis}{\clusterfont{B}}


\newcommand{\dynA}{\dynkinfont{A}}
\newcommand{\dynB}{\dynkinfont{B}}
\newcommand{\dynC}{\dynkinfont{C}}
\newcommand{\dynD}{\dynkinfont{D}}
\newcommand{\dynE}{\dynkinfont{E}}
\newcommand{\dynF}{\dynkinfont{F}}
\newcommand{\dynG}{\dynkinfont{G}}
\newcommand{\dynX}{\dynkinfont{X}}
\newcommand{\dynY}{\dynkinfont{Y}}

\newcommand{\exdynA}{\widetilde{\dynA}}
\newcommand{\exdynB}{\widetilde{\dynB}}
\newcommand{\exdynC}{\widetilde{\dynC}}
\newcommand{\exdynD}{\widetilde{\dynD}}
\newcommand{\exdynE}{\widetilde{\dynE}}
\newcommand{\exdynF}{\widetilde{\dynF}}
\newcommand{\exdynG}{\widetilde{\dynG}}
\newcommand{\exdynX}{\widetilde{\dynX}}


\newcommand{\seed}{\Sigma}
\newcommand{\initialseed}{\Sigma_{t_0}}

\newcommand{\mutation}{\mu}

\newcommand{\field}{\mathbb{F}}

\newcommand{\Roots}{\Phi}

\newcommand{\boundellipse}[3]
{(#1) ellipse (#2 and #3)}

\newcommand{\reducedto}{\succ}

\numberwithin{equation}{section}

\tikzset{ynode/.style = {circle, fill = yellow, inner sep = 2pt, opacity = 0.5}}
\tikzset{gnode/.style = {circle, fill=green, inner sep = 2pt, opacity = 0.5}}
\tikzstyle{vertex}=[draw, circle, fill=black, inner sep = 2pt]
\tikzstyle{double line} = [
double distance = 1.5pt, 
double=\pgfkeysvalueof{/tikz/commutative diagrams/background color}
]
\tikzstyle{triple line} = [
double distance = 2pt, 
double=\pgfkeysvalueof{/tikz/commutative diagrams/background color}
]


%
%


\setlength{\marginparwidth}{0.7in}



\makeatletter 
\tikzset{curlybrace/.style={rounded corners=2pt,line cap=round}}%
\pgfkeys{
/curlybrace/.cd,%
tip angle/.code     =  \def\cb@angle{#1},
/curlybrace/.unknown/.code ={\let\searchname=\pgfkeyscurrentname
                              \pgfkeysalso{\searchname/.try=#1,
                              /tikz/\searchname/.retry=#1}}}  
\def\curlybrace{\pgfutil@ifnextchar[{\curly@brace}{\curly@brace[]}}%

\def\curly@brace[#1]#2#3#4{%
\pgfkeys{/curlybrace/.cd,
tip angle = 0.75}%
\pgfqkeys{/curlybrace}{#1}%
\ifnum 1>#4 \def\cbrd{0.05} \else \def\cbrd{0.075} \fi
\draw[/curlybrace/.cd,curlybrace,#1]  (#2:#4-\cbrd) -- (#2:#4) arc (#2:{(#2+#3)/2-\cb@angle}:#4) --({(#2+#3)/2}:#4+\cbrd) coordinate (curlybracetipn);
\draw[/curlybrace/.cd,curlybrace,#1] ({(#2+#3)/2}:#4+\cbrd) -- ({(#2+#3)/2+\cb@angle}:#4) arc ({(#2+#3)/2+\cb@angle} :#3:#4) --(#3:#4-\cbrd)
}
\makeatother

\newlength{\myline}
\setlength{\myline}{1pt}
\newcommandx*{\triplearrow}[4][1=0, 2=1]{
  \draw[double distance=3\myline,#3] #4;
  \draw[shorten <=#1\myline,shorten >=#2\myline,#3] #4
}

\title{On folded cluster patterns of affine type}

\author{Byung Hee An}
\email{anbyhee@knu.ac.kr}
\address{Department of Mathematics Education, Kyungpook National University, Republic of Korea}
%

\author{Eunjeong Lee}
\email{eunjeong.lee@ibs.re.kr}
\address{Center for Geometry and Physics, Institute for Basic Science (IBS), Pohang 37673, Republic of Korea}
\keywords{Cluster patterns of affine type, folding, invariance and admissibility}
\subjclass[2020]{Primary: 13F60, 05E18; Secondary: 17B67}

\begin{document}

\begin{abstract}
A cluster algebra is a commutative algebra whose structure is decided by a skew-symmetrizable matrix or 
a quiver.
When a skew-symmetrizable matrix is invariant under an action of a finite group and this action is 
\emph{admissible}, the \emph{folded} cluster algebra is obtained from the 
original one. Any cluster algebra of non-simply-laced affine type can 
be obtained by folding a cluster algebra of simply-laced affine type with a 
specific $G$-action. 
In this paper, we study the combinatorial properties of quivers in the cluster 
algebra of affine type. 
We prove that for any quiver of simply-laced affine type, $G$-invariance 
and $G$-admissibility are equivalent.
This leads us to prove that the set of $G$-invariant seeds forms the folded cluster pattern. 
\end{abstract}

\maketitle

\tableofcontents

\section{Introduction}
Cluster algebras are commutative algebras introduced and studied by 
Fomin--Zelevinsky and Berenstein--Fomin--Zelevinsky in a series of 
articles~\cite{FZ1_2002, FZ2_2003, BFZ3_2005, FZ4_2007}. They were invented in 
the context of total positivity and dual canonical bases in Lie theory; since 
then, connections and applications have been discovered in diverse 
areas of mathematics. A cluster algebra is a commutative algebra with certain 
generators, called 
\emph{cluster variables}, defined recursively. The recursive structure of 
cluster algebra is encoded in the combinatorial datum of an
exchange matrix, which is a skew-symmetrizable integer 
matrix (see Definition~\ref{definition:seed and exchange matrix}).
More precisely, a cluster algebra is defined by a 
bunch of \emph{seeds} and each seed consists of cluster variables and an 
exchange matrix. The structure of this cluster (called the \emph{seed 
pattern}) is decided recursively via an operation (called the \emph{mutations}) 
given by the exchange matrix in each seed (see Section~\ref{section_preliminaries}).

A cluster algebra is said to be of \emph{finite type} if the cluster pattern has 
only a finite number of seeds. Fomin and Zelevinsky~\cite{FZ2_2003} showed that 
the cluster algebras of finite type can be classified in terms of the Dynkin 
diagrams of finite-dimensional simple Lie algebras.
A wider class of cluster algebras consists of cluster algebras of \emph{finite 
mutation type}, which have finitely many exchange matrices but are allowed to 
have infinitely many seeds. Felikson, Shapiro, and 
Tumarkin proved in~\cite{FeliksonShapiroTumarkin12_JEMS} that a 
\emph{skew-symmetric} cluster algebra of 
rank $n$ is finite mutation type if and only if $n \le 2$; or it is of surface 
type; or it is one of $11$ exceptional types:
\[
\underbrace{\dynE_6, \dynE_7, \dynE_8}_{\text{finite type}},\quad
\underbrace{\exdynE_6, \exdynE_7,\exdynE_8}_{\text{affine type}}, \quad
\dynE_6^{(1,1)},\dynE_{7}^{(1,1)},\dynE_8^{(1,1)},\dynX_6,\dynX_7.
\]
Here, we notice that cluster algebras of surface type can be of other  
remaining simply-laced Dynkin type: $\dynA_n,\dynD_n,\exdynA_{p,q},\exdynD_n$ 
as provided in~\cite[Table~1]{FST08}.

A skew-symmetric matrix can be considered as the adjacency matrix of a finite 
directed multigraph that does not have directed cycles of length at most $2$. 
We call such directed graph a \emph{quiver} (see Figure~\ref{fig_quivers} for 
examples/non-examples of quivers).
To study \emph{skew-symmetrizable} cluster algebras of finite mutation type, 
Felikson, Shapiro, and Tumarkin used the folding and unfolding procedures of 
cluster algebras in~\cite{FeliksonShapiroTumarkin12_unfoldings}. 
Indeed, they consider a certain symmetry on the quivers and their quotients
which leads to prove that skew-symmetrizable cluster algebras correspond to the 
non-simply-laced Dynkin diagrams are of finite mutation type by folding 
simply-laced Dynkin diagrams. We present in Table~\ref{table:all possible 
foldings} how simply-laced affine Dynkin diagrams and non-simply-laced affine 
Dynkin diagrams are related (also, see 
figures in~Appendix~\ref{appendix_actions_on_Dynkin_diagrams}). Folding procedure produces all non-simply-laced affine Dynkin diagrams using simply-laced affine Dynkin diagrams.
\newcolumntype{?}{!{\vrule width 0.6pt}}
\begin{table}[ht]
{
\setlength{\tabcolsep}{4pt}
\renewcommand{\arraystretch}{1.5}		
\begin{tabular}{c||c?c?c?c?c?c?c?c?c?c?c}
\toprule
$\dynX$ & $\exdynA_{2,2}$ & $\exdynA_{n,n}$ 
& \multicolumn{2}{c?}{$\exdynD_4$ }
&  \multicolumn{2}{c?}{$\exdynD_n$ } 
&  \multicolumn{2}{c?}{$\exdynD_{2n}$ } 
&  \multicolumn{2}{c?}{$\exdynE_6$ }
& $\exdynE_7$ \\
\hline 
$G$ & $\Z/2\Z$& $\Z/2\Z$  
& $(\Z/2\Z)^2$ & $\Z/3\Z$ 
& $\Z/ 2\Z$ & $\Z/2\Z$
& $\Z/2\Z$  & $(\Z/2\Z)^2$
& $\Z/3\Z$ & $\Z/2\Z$ 
& $\Z/2\Z$ \\
\hline 
$\dynY$ & $\exdynA_1$ & $\dynD_{n+1}^{(2)}$ 
& $\dynA_2^{(2)}$ & $\dynD_4^{(3)}$ 
& $\exdynC_{n-2}$ & $\dynA_{2(n-1)-1}^{(2)}$
& $\exdynB_n$ & $\dynA_{2n-2}^{(2)}$
& $\exdynG_2$ & $\dynE_6^{(2)}$
& $\exdynF_4$ \\
\bottomrule
\end{tabular}}
\caption{Foldings appearing in affine Dynkin diagrams. For 
$(\dynX, G, \dynY)$ in each column, the quiver of type~$\dynX$ is globally 
foldable with respect to $G$, and the corresponding folded cluster pattern is of 
type~$\dynY$.}
\label{table:all possible foldings}
\end{table}

In this paper, we investigate combinatorial properties of quivers in the seed 
pattern of affine type. To state our main theorem, we 
prepare some terminologies. 
We say two quivers are \emph{mutation equivalent} if one can be obtained from 
the other by applying finitely many mutations.
For a simply-laced Dynkin type $\dynX$, a quiver is \emph{of type~$\dynX$} if 
it is mutation equivalent to a quiver whose underlying graph is the Dynkin 
diagram of $\dynX$. 
For a finite group $G$ acting on the set of vertices of a quiver~$\quiver$, the 
quiver $\quiver$ is \emph{$G$-invariant} if for any $g \in G$, the quiver 
$\quiver$ is 
isomorphic to $g \cdot \quiver$ as a directed graph. A $G$-invariant quiver 
$\quiver$ is \emph{$G$-admissible} if for any two vertices $i$ and $i'$ in the same $G$-orbit, there is no arrow connecting $i$ and $i'$ and whenever there is an arrow $i \to j$ (respectively, $j \to i$), we should have $i' \to j$ (respectively, $j \to i'$). 
See 
Section~\ref{section_inv_and_admiss_of_quivers} for more precise definitions.
In general, a $G$-invariant quiver might not be $G$-admissible as explained in 
Example~\ref{example_non_admissible}. Nevertheless, when we concentrate on 
quivers of affine type, the $G$-invariance ensures the $G$-admissibility, which 
is the main result of the paper.
\begin{theorem}[{Theorem~\ref{thm_invaraint_implies_admissible}}]\label{thm_main}
Let $(\dynX, G, \dynY)$ be a triple given by a column of Table~\ref{table:all 
possible foldings}.
Let $\quiver$ be a quiver of type~$\dynX$. 
If $\quiver$ is $G$-invariant, then it is $G$-admissible. Indeed,  
$G$-invariance and $G$-admissibility are equivalent.
\end{theorem}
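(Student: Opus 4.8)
The implication ``$G$-admissible $\Rightarrow$ $G$-invariant'' holds for an arbitrary quiver and is immediate: if $\quiver$ is $G$-admissible then every $g\in G$ is in fact an automorphism of $\quiver$. Indeed, for an arrow $i\to j$ the case $j\in G\cdot i$ is excluded by the no-arrow-inside-an-orbit condition, and if $j\notin G\cdot i$ one applies the neighborhood-consistency condition first to $G\cdot i$ and $j$ (getting $g\cdot i\to j$ with the same multiplicity) and then to $G\cdot j$ and $g\cdot i$ (getting $g\cdot i\to g\cdot j$); hence $g\cdot\quiver=\quiver$, and in particular $\quiver\cong g\cdot\quiver$. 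So the whole content lies in the forward implication ``$G$-invariant $\Rightarrow$ $G$-admissible'', where the hypothesis that $\quiver$ is of affine type is indispensable (cf.\ Example~\ref{example_non_admissible}).

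For the forward implication I would combine two ingredients. The first is the explicit combinatorial description of the quivers mutation equivalent to the Dynkin quiver of $\dynX$, together with their automorphism groups: for every $\dynX$ in Table~\ref{table:all possible foldings} the mutation class is of finite mutation type, and its quivers fall into finitely many ``shapes'' --- for $\exdynA_{n,n}$ a single non-oriented defining cycle carrying oriented trees, for $\exdynD_n$ the affine analogue of Vatne's list, and an explicit finite list for $\exdynD_4$, $\exdynE_6$, $\exdynE_7$. The second is a rigidity step: starting from an isomorphism $\quiver\cong g\cdot\quiver$, one shows --- using the uniqueness, in the affine case, of the defining combinatorial data (the defining cycle, respectively the distinguished vertices) --- that $g$ in fact acts as an automorphism of $\quiver$, so the $G$-action factors through $\operatorname{Aut}(\quiver)$. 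It then suffices, shape by shape, to run over the subgroups of $\operatorname{Aut}(\quiver)$ isomorphic to $G$ with the prescribed orbit sizes and to check the two conditions defining $G$-admissibility.

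The first condition (no arrow inside a $G$-orbit) is eliminated by inspecting the shapes: the constrained combinatorics of the defining cycle and its attached trees --- for instance, in type $\exdynA_{n,n}$ the involutions compatible with the defining cycle move each cycle vertex away from its neighbors --- and the explicit lists in the exceptional cases leave no candidate $G$-action with adjacent orbit mates. For the second condition (a vertex and its $G$-mates have the same arrows to any given vertex) the delicate point is a pair of mutually adjacent $G$-orbits of size $\ge 2$: admissibility demands a complete-bipartite connection (or none) rather than a \emph{matching}. Here the affine hypothesis is essential --- the shape classification shows that in a $G$-symmetric quiver of type $\dynX$ two orbits of size $\ge 2$ can be mutually adjacent only in the admissible way, a matching being incompatible with the rigid structure (the defining cycle, the distinguished vertices) that characterizes type $\dynX$.

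The principal obstacle is the family $\dynX=\exdynD_n$, together with its even sub-family $\exdynD_{2n}$: the mutation class has several genuinely different shapes, each admitting several candidate $\Z/2$- (and, for even $n$, $(\Z/2)^2$-) actions, so both admissibility conditions must be verified uniformly in $n$ and across all shapes, while tracking the multiple arrows created by mutation and distinguishing the several foldings involved ($\exdynC_{n-2}$, $\exdynB_n$, and twisted types $\dynA^{(2)}$). Once the rigidity step is in place, each of these reduces to a bounded verification; assembling them over all columns of Table~\ref{table:all possible foldings}, and combining with the easy implication above, gives the stated equivalence.
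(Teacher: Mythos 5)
Your plan diverges from the paper's proof in its central mechanism, and the divergence hides the main gap. You propose to first establish an explicit structural classification of all quivers in the mutation class of $\dynX$ (``shapes'': a defining cycle carrying trees for $\exdynA_{n,n}$, ``the affine analogue of Vatne's list'' for $\exdynD_n$, explicit finite lists for $\exdynE_6,\exdynE_7$), then run over candidate $G$-actions shape by shape. But such a classification is exactly what is \emph{not} available: Vatne's result covers finite type $\dynD_n$, not $\exdynD_n$, and no uniform-in-$n$ description of the affine $\dynD$ mutation class is established here or cited; for $\exdynE_7$ the ``explicit finite list'' has $1080$ exchange matrices, so invoking it amounts to the computer check that the paper explicitly sets aside as an ``experimental proof.'' Producing the classification you lean on would be at least as hard as the theorem itself. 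The paper's actual argument avoids it entirely: it assumes a violation of condition \ref{def_admissible_4} of Definition~\ref{definition:admissible quiver}, extracts from the $G$-symmetry a small forbidden configuration (a directed $4$-cycle, a multi-arrow triangle, a complete $4$-vertex subquiver as in Lemma~\ref{lemma:complete 4-graph}), and shows by explicit mutation sequences that $\quiver$ then \emph{reduces to} a mutation-infinite quiver or to $2\exdynA_1$, contradicting Corollary~\ref{corollary:seed restriction}; the Reading--Stella input (Lemma~\ref{lemma_on_facets}, that every seed of affine type misses some vertex) and Lemma~\ref{lemma:induced subquiver} are what make this reduction argument run, and none of them appear in your plan.

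Two smaller points. First, condition \ref{def_admissible_3} (no arrows inside an orbit) does not need any shape inspection when every $g\in G$ has order $2$: invariance gives $b_{i,g(i)}=b_{g(i),g^2(i)}=b_{g(i),i}=-b_{i,g(i)}$, hence $b_{i,g(i)}=0$ (Lemma~\ref{lemma:condition_3_holds_of_order_2}); only the $\Z/3\Z$ actions on $\exdynD_4$ and $\exdynE_6$ require a genuine argument. Second, your ``rigidity step'' and your derivation of invariance from admissibility both address the informal definition in the introduction; under the paper's working Definition~\ref{definition:admissible quiver}, invariance means $b_{i,j}=b_{g(i),g(j)}$ on the nose and admissibility is defined only for invariant quivers, so the converse implication is definitional and no rigidity is needed. (As stated, your derivation is also not quite right: conditions \ref{def_admissible_3} and \ref{def_admissible_4} control signs, not multiplicities, so they do not by themselves force $g\cdot\quiver=\quiver$.) In short, the easy direction is fine, but the forward direction as planned rests on an unproved classification, and the key idea that actually closes the argument in the paper --- reduction of a hypothetical non-admissible configuration to a forbidden subquiver of a mutation-finite quiver --- is missing.
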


The proof of the theorem is provided by type-by-type arguments and we make 
great 
use of the fact that the corresponding cluster algebra is of finite mutation 
type. Because the cluster algebras of affine type are of finite mutation type 
as we already mentioned, using the computer program 
\mbox{SageMath}~\cite{sagemath}, one can get the same result as 
Theorem~\ref{thm_main} for quivers of type~$\exdynE$ or type 
$\exdynA_{n,n},\exdynD_n$ for a given $n$, which is an \emph{experimental 
proof}. 
On the other hand, we provide a \emph{combinatorial proof} by observing the 
combinatorics of quivers. 

We also study an application of Theorem~\ref{thm_main} to the \emph{folded} 
cluster pattern. Let $\field$ be the rational function field with $n$ algebraically independent variables over $\C$. 
Suppose that a finite group $G$ acts on $[n] \coloneqq \{1,\dots,n\}$.
Let $\field^G$ be the field of rational functions in $\#([n]/G)$ independent variables and $\psi \colon \field \to \field^G$ be a surjective homomorphism. A seed $\seed = (\mathbf x, \quiver)$, which is a pair of variables $\mathbf x = (x_1,\dots,x_n)$ in $\field$ and a quiver $\quiver$ on $[n]$, is called \emph{$(G,\psi)$-invariant} or \emph{$(G,\psi)$-admissible} if for any indices $i$ and $i'$ in the same $G$-orbit, we have $\psi(x_i) = \psi(x_{i'})$ and $\quiver$ is $G$-invariant or $G$-admissible, respectively.

For a $(G,\psi)$-admissible seed $\seed$, if the admissibility is preserved under orbit mutations, then one can fold the seed $\seed$, which will be denoted by $\seed^G$. Here, an orbit mutation is a composition of mutations for all vertices in the same $G$-orbit. 
A cluster pattern given by the folded seed $\seed^G$ can be identified with the set of $(G,\psi)$-admissible seeds in the original cluster pattern given by $\seed$. 
Indeed, the folded cluster pattern consists of seeds defined recursively via a sequence of orbit mutations.
We prove that the set of $(G,\psi)$-invariant seeds forms the folded cluster pattern.
\begin{corollary}[{Corollary~\ref{cor_invariant_seeds_form_folded_pattern}}]
Let $(\dynX,G,\dynY)$ be a triple given by a column of Table~\ref{table:all 
possible foldings}.
Let $\initialseed = (\mathbf x, \quiver)$ be a seed. Suppose 
that $\quiver$ is of type~$\dynX$. Define $\psi \colon \field  \to \field^G$ so 
that $\initialseed$ is a $(G, \psi)$-admissible seed. Then, any 
$(G,\psi)$-invariant seed can be reached 
by a sequence of orbit mutations from $\initialseed$. Moreover, the set of $(G,\psi)$-invariant seeds forms the `folded' cluster pattern given by $\initialseed^G$ of $\dynY$ via folding. 
\end{corollary}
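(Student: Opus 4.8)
The plan is to bootstrap the corollary from Theorem~\ref{thm_main} together with the general folding formalism: once orbit mutations are shown never to leave the $(G,\psi)$-admissible locus, the folded seed $\initialseed^G$ and the cluster pattern it generates are defined, folding identifies that pattern with the set of $(G,\psi)$-admissible seeds in the cluster pattern of $\initialseed$, and Theorem~\ref{thm_main} then lets one trade ``admissible'' for ``invariant'' throughout. The first move is to upgrade Theorem~\ref{thm_main} from a single quiver to the whole cluster pattern: every quiver obtained from $\quiver$ by a sequence of mutations — in particular by a sequence of orbit mutations — is again of type~$\dynX$, so by Theorem~\ref{thm_main} it is $G$-invariant exactly when it is $G$-admissible. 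Since the conditions imposed on the cluster part are literally the same in the definitions of $(G,\psi)$-invariant and $(G,\psi)$-admissible — namely $\psi(x_i)=\psi(x_{i'})$ whenever $i$ and $i'$ share a $G$-orbit — a seed in the cluster pattern of $\initialseed$ is $(G,\psi)$-invariant if and only if it is $(G,\psi)$-admissible.

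The heart of the argument is to show, by induction on the length of an orbit-mutation sequence starting at $\initialseed$, that all seeds produced are $(G,\psi)$-admissible, so that at each stage the next orbit mutation is legitimate. For the inductive step, let $\seed=(\mathbf x,\quiver)$ be $(G,\psi)$-admissible with $\quiver$ of type~$\dynX$ and let $[k]$ be a $G$-orbit of vertices. $G$-admissibility forces the absence of arrows between distinct vertices of $[k]$, so the mutations at these vertices commute and the orbit mutation $\mutation_{[k]}$ is well defined; because $\mutation_{[k]}$ commutes with the $G$-action, from $g\cdot\quiver\cong\quiver$ one gets $g\cdot\mutation_{[k]}(\quiver)\cong\mutation_{[k]}(\quiver)$ for all $g\in G$, i.e.\ $\mutation_{[k]}(\quiver)$ is $G$-invariant; and the exchange relations at the vertices of $[k]$ are permuted among themselves by $G$ thanks to $\psi(x_i)=\psi(x_{i'})$, so $\psi$ still agrees on $G$-orbits after mutation. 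Thus $\mutation_{[k]}(\seed)$ is $(G,\psi)$-invariant, and since its quiver is still of type~$\dynX$, Theorem~\ref{thm_main} promotes this to $(G,\psi)$-admissible, closing the induction.

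With the induction in hand, $\initialseed^G$ is a well-defined seed — of type~$\dynY$ by Table~\ref{table:all possible foldings} — the folded cluster pattern it generates is defined, and the folding construction assigns bijectively to each seed reachable from $\initialseed$ by orbit mutations a seed of the cluster pattern of $\initialseed^G$; by the first paragraph the source of this bijection is exactly the set of $(G,\psi)$-invariant seeds in the cluster pattern of $\initialseed$, which gives the ``forms the folded cluster pattern'' assertion and, in particular, that every $(G,\psi)$-invariant seed is reached from $\initialseed$ by orbit mutations. If the identification of the folded pattern of $\initialseed^G$ with \emph{all} $(G,\psi)$-admissible seeds (rather than merely those reached by orbit mutations) is not already available from the general formalism, one obtains it by comparing the orbit-mutation class of an arbitrary $(G,\psi)$-admissible seed $\seed'$ with that of $\initialseed$: running the induction from $\seed'$ shows its orbit-mutation class is again a folded cluster pattern of type~$\dynY$, hence of finite mutation type with connected exchange graph, and since $\seed'$ lies in the cluster pattern of $\initialseed$ the two folded patterns must coincide, so $(\seed')^G$, and therefore $\seed'$ itself, is reached from $\initialseed^G$, respectively from $\initialseed$, by a sequence of (orbit) mutations.

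The main obstacle is the inductive step of the second paragraph, and it is precisely where Theorem~\ref{thm_main} is indispensable: on general grounds, orbit-mutating a $G$-admissible quiver is only guaranteed to return a $G$-\emph{invariant} quiver, and without the affine-type equivalence of invariance and admissibility the induction would stall at the first mutation that breaks admissibility — which, as Example~\ref{example_non_admissible} shows, genuinely occurs for quivers outside affine type. A secondary technical point is the bookkeeping that $\psi$ remains constant on $G$-orbits after an orbit mutation and, if needed, the abstract comparison of folded patterns sketched above to see that no $(G,\psi)$-invariant seed is missed; both are routine once the admissibility is known to propagate.
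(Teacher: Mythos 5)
Your first two paragraphs are sound and match the paper's setup: the equivalence of $(G,\psi)$-invariance and $(G,\psi)$-admissibility propagates over the whole mutation class by Theorem~\ref{thm_invaraint_implies_admissible}, the induction shows orbit mutations never leave the admissible locus (this is exactly how the paper deduces global foldability), and Proposition~\ref{proposition:folded cluster pattern} then produces the folded pattern. The gap is in your third paragraph, which is where the actual content of the corollary sits: you must show that \emph{every} $(G,\psi)$-admissible seed in the cluster pattern of $\initialseed$ --- not merely every seed already known to be reachable by orbit mutations --- is reached by orbit mutations. Your substitute argument, that the orbit-mutation class of an arbitrary admissible seed $\seed'$ is ``again a folded cluster pattern of type~$\dynY$ \dots and since $\seed'$ lies in the cluster pattern of $\initialseed$ the two folded patterns must coincide,'' does not hold up: $\seed'$ is connected to $\initialseed$ only by an \emph{ordinary} mutation sequence, which does not descend to $\field^G$, and two cluster patterns of the same affine type sitting in the same ambient field need not be equal. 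Even if one knew that $(\seed')^G$ lay in the pattern of $\initialseed^G$, you would still need a folded seed to have a \emph{unique} admissible preimage before concluding that $\seed'$ itself is obtained by lifting the corresponding orbit-mutation sequence.

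The paper closes exactly this gap in Proposition~\ref{prop_admissible_seeds_form_folded_seed_pattern} using two inputs you never invoke. First, positivity of the Laurent phenomenon (Gross--Hacking--Keel--Kontsevich): taking one admissible preimage $\seed_t$ as the initial seed, any other admissible seed $\seed_s$ with $\{\psi(x_{i;t})\}=\{\psi(x_{i;s})\}$ must already satisfy $\{x_{i;t}\}=\{x_{i;s}\}$, because each $x_{i;s}$ is a Laurent polynomial with non-negative coefficients in the $x_{i;t}$ and no cancellation can occur under $\psi$. Second, the theorem of Buan--Marsh--Reiten--Todorov that for an acyclic initial quiver the cluster determines the seed, whence $\seed_s=\seed_t$. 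This uniqueness of admissible preimages is what identifies the set of admissible seeds with the folded cluster pattern and forces every admissible (equivalently, invariant) seed into the orbit-mutation class of $\initialseed$. Without these ingredients, or an equivalent injectivity statement, your argument does not establish the ``can be reached by a sequence of orbit mutations'' claim, which is the point of the corollary and of Dupont's question.
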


This provides an 
answer to the question in \cite[Problem~9.5]{Dupont08} which asks whether any 
$(G,\psi)$-invariant seed can be reached by sequences of orbit mutations 
from the initial seed for the case of cluster algebras of affine type (see Remark~\ref{rmk_Dupont}).
Moreover, this result is useful when studying Lagrangian fillings of 
Legendrians of affine type as exhibited in the forthcoming paper 
An--Bae--Lee~\cite{ABL2021c}.

The paper is organized as follows. 
In Section~\ref{section_preliminaries}, we review the definition of cluster 
algebras and mutations. 
In Section~\ref{section_inv_and_admiss_of_quivers}, we consider the 
$G$-invariance and $G$-admissibility of quivers. 
In Section~\ref{section:proof of admissibility}, we present the proof of the 
main theorem by analyzing each type of quiver. 
In Section~\ref{section_connections_with_CA}, we provide an application of the 
main theorem by considering the \emph{folded} version of cluster algebras and 
cluster patterns.
We describe finite group actions on quivers of affine Dynkin type in 
Appendix~\ref{appendix_actions_on_Dynkin_diagrams}.
\subsection*{Acknowledgement}
The authors would like to great thank Youngjin Bae for his careful reading and 
corrections.
B. H. An was supported by the National Research Foundation of Korea (NRF) grant 
funded by the Korea government (MSIT) (No. 2020R1A2C1A0100320).
E. Lee was supported by the Institute for Basic Science (IBS-R003-D1).

\section{Preliminaries: cluster algebras}
\label{section_preliminaries}
Cluster algebras, introduced by Fomin and Zelevinsky~\cite{FZ1_2002}, are 
commutative algebras with specific generators, called \emph{cluster variables}, 
defined recursively. In this section, we recall basic notions in the theory of 
cluster algebras. For more details, we refer the reader to~\cite{FZ1_2002, 
FZ2_2003}.

Throughout this section, we fix $n \in \Z_{>0}$ and we let $\field$ be the 
rational function field with $n$ algebraically independent variables over $\C$.
We also denote the set $\{1,\dots, n\}$ by simply $[n]$.

\begin{definition}[{cf. \cite{FZ1_2002, FZ2_2003}}]\label{definition:seed and exchange matrix}
A \emph{seed} $\seed = (\bfx, \qbasis)$ is a pair of 
\begin{itemize}
\item a tuple $\mathbf x = (x_1,\dots,x_n)$ of algebraically independent 
generators of $\field$, that is, $\field = \C(x_1,\dots,x_n)$;
\item an $n \times n$ skew-symmetrizable integer matrix $\qbasis = 
(b_{i,j})$, that is, there exist positive integers $d_1,\dots,d_n$ such 
that
$$\textrm{diag}(d_1,\dots,d_n) \cdot \qbasis$$ is a 
skew-symmetric matrix.
\end{itemize}
We call elements $x_1,\dots,x_n$ \emph{cluster variables} and call $\qbasis$ 
\emph{exchange matrix}. 
\end{definition}

In general, cluster variables consist of \emph{unfrozen} 
(or \emph{mutable}) variables and \emph{frozen} variables but we assume the following.

\begin{assumption}\label{assumption:all variables are mutable}
Throughout this paper, we assume that all cluster variables are mutable.
\end{assumption} 

A finite directed multigraph $\quiver$ with the set $[n]$ of vertices is called a \emph{quiver} on $[n]$ if it does not have directed cycles of length at 
most $2$.  
In Figure~\ref{fig_quivers}, we provide examples/non-examples of 
quivers. 
The left directed graph in Figure~\ref{fig_quivers_nonexample} is not a quiver because 
of the one-cycle on the vertex $2$; neither is the right one because it has 
a directed two-cycle connecting vertices $1$ and $2$.
\begin{figure}[ht]
\begin{subfigure}[b]{0.45\textwidth}
\begin{tikzpicture}[baseline=-.5ex]
\begin{scope}
\draw[fill] (0,0) node (A1) {} circle (2pt) (210:1) node (A3) {} circle (2pt) 
(90:1) node (A2) {} circle (2pt) (-30:1) node (A4) {} circle (2pt);
\draw[->] (A2) node[above] {$2$} -- (A1) node[above right] {$1$};
\draw[->] (A3) node[left] {$3$} -- (A2);
\draw[->] (A4) node[right] {$4$} -- (A2);
\draw[->] (A1) -- (A3);
\draw[->] (A3) -- (A4);
\draw[->] (A1) -- (A4);
\end{scope}
\begin{scope}[xshift=3cm]
\draw[fill] (0,0) node (A1) {} circle (2pt) (210:1) node (A3) {} circle (2pt) 
(90:1) node (A2) {} circle (2pt) (-30:1) node (A4) {} circle (2pt);
\draw[->] (A1) -- (A3) node[left] {$3$};
\draw[->] (A2) -- (A3);
\draw[-Implies,double distance=2pt] (A4) -- (A1) node[below] {$1$};
\draw[->] (A3) -- (A4) node[right] {$4$};
\draw[<-] (A4) -- (A2) node[above] {$2$};
\draw[-Implies,double distance=2pt] (A1)--(A2);
\end{scope}
\end{tikzpicture}
\caption{Quivers}\label{fig_quivers_example}
\end{subfigure}
\begin{subfigure}[b]{0.45\textwidth}
\begin{tikzpicture}[baseline=-.5ex]
\draw[fill] 
(210:1) node (A3) {} circle (2pt) 
(90:1) node (A2) {} circle (2pt) 
(-30:1) node (A4) {} circle (2pt);
\draw (A2) node[above] {$1$} (A3) node[below left] {$2$} (A4) node[below right] 
{$3$};
\draw[->] (A2)--(A3);
\draw[->] (A3)--(A4);
\draw[->] (A4)--(A2); 

\draw[->] (A3) to [out=90,in=180,looseness=10] (A3);

\begin{scope}[xshift=3cm]
\draw[fill] 
(210:1) node (A3) {} circle (2pt) 
(90:1) node (A2) {} circle (2pt) 
(-30:1) node (A4) {} circle (2pt);
\draw (A2) node[above] {$1$} (A3) node[below left] {$2$} (A4) node[below right] 
{$3$};

\draw[->] (A2) to [bend left] (A3);
\draw[->] (A3) to [bend left] (A2);
\draw[->] (A3)--(A4);
\draw[->] (A4)--(A2); 
\end{scope}
\end{tikzpicture}
\caption{Directed graphs which are not quivers}\label{fig_quivers_nonexample}
\end{subfigure}
\caption{Examples and non-examples of quivers}\label{fig_quivers}
\end{figure}
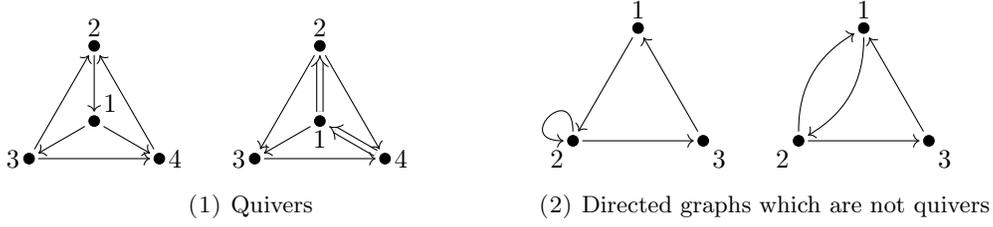

The adjacency matrix of a quiver is always a skew-symmetric matrix. 
To define cluster algebras, we introduce mutations on seeds, exchange matrices, 
and quivers as follows. 

\begin{definition}
The mutation on seeds, exchange matrices, or quivers is defined as follows.
\begin{enumerate}
\item (Mutation on seeds)	For a seed $\seed = (\bfx, \qbasis)$ and an integer 
$k \in [n]$, the \emph{mutation} $\mutation_k(\seed) = (\bfx', \qbasis')$  
is defined as follows:
\begin{equation*}
\begin{split}
x_i' &= \begin{cases}
x_i &\text{ if } i \neq k;\\
\displaystyle 
x_k^{-1}\left( \prod_{b_{j,k} > 0} x_j^{b_{j,k}} + \prod_{b_{j,k} < 
0}x_j^{-b_{j,k}}
\right) & \text{ otherwise}.
\end{cases}\\[1em]
b_{i,j}' &= \begin{cases}
-b_{i,j} & \text{ if } i = k \text{ or } j = k; \\
\displaystyle b_{i,j} + \frac{|b_{i,k}| b_{k,j} + b_{i,k} | b_{k,j}|} {2} & 
\text{ otherwise}.
\end{cases}
\end{split}
\end{equation*}
\item (Mutation on exchange matrices)
We define $\mutation_k(\qbasis) = (b_{i,j}')$, and say that \emph{$\qbasis' 
=(b_{i,j}')$ is the mutation of $\qbasis$ at $k$}.
\item (Mutation on quivers) Let $\quiver$ be a quiver on $[n]$ 
and 
$\qbasis(\quiver)$ its adjacency matrix. For each $k\in[n]$, the mutation
$\mutation_k(\qbasis(\quiver))$ is again the adjacency matrix of a quiver $\quiver'$. 
We define $\mutation_k(\quiver)$ is a quiver satisfying
\begin{equation}\label{eq_mutations_on_quiver_and_exchange_matrix}
\qbasis(\mutation_k(\quiver)) = \mutation_k(\qbasis(\quiver))
\end{equation}
and say that $\mutation_k(\quiver)$ is the \emph{mutation of $\quiver$ at $k$}.
\end{enumerate}
\end{definition}

An immediate check shows that $\mutation_k(\seed)$ is again a seed, and a 
mutation is an involution, that is, its square is the identity. 
For a skew-symmetrizable matrix $\qbasis$ of size $n \times n$, and for $k \in [n]$, we have  
\(
\mutation_k(\qbasis^T) = \mutation_k(\qbasis)^T
\)
by the definition of mutations. Therefore, the mutation preserves the skew-symmetricity.
Because 
of~\eqref{eq_mutations_on_quiver_and_exchange_matrix}, we sometimes denote a 
seed by 
\[
\seed = (\mathbf{x},\quiver) = (\mathbf{x}, \qbasis(\quiver)).
\] 
\begin{example}
Let $\quiver$ be a quiver on the left side of Figure~\ref{fig_quivers_example}. 
The adjacency matrix $\qbasis$ of $\quiver$ and the mutation $\mutation_3(\qbasis)$ 
are given by
\[
\qbasis = \qbasis(\quiver) = \begin{bmatrix}
0 & -1 & 1 & 1 \\
1 & 0 & -1 & -1 \\
-1 & 1 & 0 & 1 \\
-1 & 1 & -1 & 0 
\end{bmatrix},
\qquad 
\mutation_3(\qbasis) = \begin{bmatrix}
0 & 0 & -1 & 2 \\
0 & 0 & 1 & -1 \\
1 & -1 & 0 & -1 \\
-2 & 1 & 1 & 0
\end{bmatrix},
\]
which produces the quiver $\mutation_3(\quiver)$:
\[
\mutation_3(\quiver)=
\begin{tikzpicture}[baseline=-.5ex]
\draw[fill] (0,0) node (A1) {} circle (2pt) (210:1) node (A3) {} circle (2pt) 
(90:1) node (A2) {} circle (2pt) (-30:1) node (A4) {} circle (2pt);

\draw[->] (A2) node[above] {$2$} -- (A3) node[left] {$3$};
\draw[->] (A3) -- (A1) node[above right] {$1$};
\draw[->] (A4) node[right] {$4$} -- (A3);
\draw[->] (A4) -- (A2);
\draw[-Implies,double distance=2pt] (A1) -- (A4);
\end{tikzpicture}.
\]
One can easily check that $\mutation_4 \mutation_3(\quiver)$ becomes the quiver on the 
right side of Figure~\ref{fig_quivers_example}.
\end{example}

\begin{remark}
Let $k$ be a vertex in a quiver $\quiver$.
The mutation $\mutation_k(\quiver)$ can also be described via a sequence of three steps:
\begin{enumerate}
\item For each directed two-arrow path $i \to k \to j$, add a new arrow $i \to j$.
\item Reverse the direction of all arrows incident to the vertex $k$.
\item Repeatedly remove directed $2$-cycles until unable to do so.
\end{enumerate}
\end{remark}

Let $\mathbb{T}_n$ denote the $n$-regular tree whose edges 
are labeled by $1,\dots,n$. Except for $n = 1$, there are infinitely many 
vertices on the tree $\mathbb{T}_n$. 
A \emph{cluster pattern} (or \emph{seed pattern}) is an assignment
\[
\mathbb{T}_n \to \{\text{seeds in } \field\}, \quad t \mapsto \seed_t = 
(\bfx_t, \qbasis_t)
\]
such that if $\begin{tikzcd} t \arrow[r,dash, "k"] & t' \end{tikzcd}$ in 
$\mathbb{T}_n$, then $\mutation_k(\seed_t) = \seed_{t'}$.

\begin{definition}[{cf. \cite{FZ2_2003}}]
Let $\{ \seed_t = (\bfx_t, \qbasis_t)\}_{t \in \mathbb{T}_n}$ be a cluster 
pattern with $\bfx_t = (x_{1;t},\dots,x_{n;t})$. The \emph{cluster algebra} 
$\cA(\{\seed_t\}_{t \in \mathbb{T}_n})$ is defined to be the 
$\C$-subalgebra of $\field$ generated by all the cluster 
variables $\bigcup_{t \in \mathbb{T}_n} \{x_{1;t},\dots,x_{n;t}\}$.
\end{definition}

If we fix a vertex $t_0 \in \mathbb{T}_n$, then a cluster pattern $\{ \seed_t 
\}_{t \in \mathbb{T}_n}$ is constructed from the seed $\initialseed$. In this 
case, we call $\initialseed$ an \emph{initial seed}. Moreover, up to 
isomorphism on $\field$, a cluster algebra depends only on $\qbasis_{t_0}$ in the initial seed 
$\initialseed$. Because of this reason, we 
simply denote by $\cA(\qbasis_{t_0})$ the cluster algebra given by the cluster 
pattern constructed from the initial seed $\initialseed = (\bfx_{t_0}, 
\qbasis_{t_0})$. Moreover, when $\qbasis_{t_0} = \qbasis(\quiver_{t_0})$ for a 
quiver $\quiver_{t_0}$, we denote by $\cA(\quiver_{t_0})$ the cluster algebra 
$\cA(\qbasis_{t_0})$.

We say that a skew-symmetrizable matrix $\qbasis'$ is \emph{mutation equivalent}
to another \linebreak
skew-symmetrizable matrix $\qbasis$ if they are connected by a 
sequence of mutations
\[
\qbasis' = (\mutation_{j_{\ell}} \cdots \mutation_{j_1})(\qbasis),
\]
and say that $\qbasis$ is \emph{acyclic} if there are no sequences $j_1,j_2,\dots, j_\ell$ with $\ell\ge 3$ such that 
\[
b_{j_1j_2}, b_{j_2j_3},\dots, b_{j_{\ell-1}j_\ell}, b_{j_\ell j_1}>0.
\]

Similarly, we say that a quiver $\quiver'$ is mutation equivalent to another quiver 
$\quiver$ if $\qbasis(\quiver')$ is mutation equivalent to $\qbasis(\quiver)$, and say that $\quiver$ is acyclic if so is $\qbasis(\quiver)$, which is also equivalent to that $\quiver$ has no directed cycles.
 
The \textit{Cartan counterpart} $C(\qbasis) = (c_{i,j})$ of $\qbasis$ is defined by 
\[
c_{i,j} = \begin{cases}
2 & \text{ if } i = j;  \\
-|b_{i,j}| & \text{ if } i \neq j.
\end{cases}
\]

\begin{definition}\label{definition:type}
For a Dynkin type $\dynX$, we define a quiver $\quiver$ or a matrix $\qbasis$ 
\emph{of type~$\dynX$} as follows.
\begin{enumerate}
\item For a quiver $\quiver$, we say that $\quiver$ is \textit{of type~$\dynX$} 
if it is mutation equivalent to an \emph{acyclic} quiver whose underlying graph is isomorphic to the Dynkin 
diagram of type $\dynX$.
\item For a skew-symmetrizable matrix $\qbasis$, we say $\qbasis$ is \textit{of 
type $\dynX$} if it is mutation equivalent to an acyclic skew-symmetrizable matrix whose 
Cartan counterpart $C(\qbasis)$ is isomorphic to the Cartan matrix of type~$\dynX$. 
\end{enumerate}
\end{definition}

Here, we say that two matrices $C_1$ and $C_2$ are \emph{isomorphic} if they are conjugate to each other via a permutation matrix, that is, $C_2 = P^{-1} C_1 P$ for some permutation matrix~$P$.
It is proved in~\cite[Corollary~4]{CalderoKeller06} that if two acyclic skew-symmetrizable matrices are mutation equivalent, then there exists a sequence of mutations from one to other such that intermediate skew-symmetrizable matrices are all acyclic. 
Indeed, if two acyclic skew-symmetrizable matrices are mutation equivalent, then their Cartan counterparts are isomorphic. 
Accordingly, a quiver or a matrix of type $\dynX$ is well-defined. 

On the other hand, all Dynkin diagrams of finite or affine type but $\exdynA_{n-1}$ are acyclic and therefore the acyclicity in Definition~\ref{definition:type}(1) can be omitted for all $\dynX$ but $\exdynA_{n-1}$.
Here is one of the reason why the acyclicity is needed for $\exdynA_{n-1}$ as follows:
Let $\quiver$ be a quiver of $n\ge 3$ vertices whose underlying graph is isomorphic to the $n$-cycle $\exdynA_{n-1}$ and so we have to say that $\quiver$ is of type $\exdynA_{n-1}$ unless the acyclicity.
However, it is known from \cite[Type IV]{Vatne10} that the quiver $\quiver$ is of type $\dynD_n$ when $\quiver$ is a directed $n$-cycle.

Even for acyclic quivers $\quiver$ of $\exdynA_{n-1}$, we have finer separation.
Recall from~\cite[Lemma~6.8]{FST08} that the mutation equivalence class of $\quiver$ does depend on the orientation of the edges in the quiver.
More precisely, let $\quiver$ and $\quiver'$ be of type $\exdynA_{n-1}$. Suppose that in 
$\quiver$, there are $p$ edges of one direction and $q = n - p$ edges of the 
opposite direction. Also, in $\quiver'$, there are $p'$ edges of one direction 
and $q' = n - p'$ edges of the opposite direction. Then two quivers $\quiver$ 
and $\quiver'$ are mutation equivalent if and only if the unordered pairs 
$\{p,q\}$ and $\{p',q'\}$ coincide. 

%
We say that a quiver $\quiver$ is of type~$\exdynA_{p,q}$ if $\quiver$ is mutation equivalent to the quiver of type $\exdynA_{p+q}$ with $p$ edges of one direction and $q$ edges of the opposite direction. We depict some examples for quivers of 
type $\exdynA_{p,q}$ in Figure~\ref{fig_example_Apq}.

\begin{figure}[ht]
\begin{tabular}{cccc}
\begin{tikzpicture}[scale = 0.5]			
\tikzset{every node/.style={scale=0.7}}
\node[vertex] (3) {};
\node[vertex] (1) [below left = 0.6cm and 0.6cm of 3]{};
\node[vertex] (2) [below right = 0.6cm and 0.6cm  of 3] {};

\draw[->] (3)--(1);
\draw[->] (3)--(2);
\draw[->] (1)--(2);
\end{tikzpicture} 
&	\begin{tikzpicture}[scale = 0.5]			
\tikzset{every node/.style={scale=0.7}}
\node[vertex] (1) {};
\node[vertex] (2) [below = of 1] {};
\node[vertex] (3) [right = of 2] {};
\node[vertex] (4) [above = of 3] {};

\draw[->] (4) -- (1);
\draw[->]	(4) -- (3);
\draw[->]	(3) -- (2);
\draw[->]	(2) --(1);
\end{tikzpicture} 
&	\begin{tikzpicture}[scale = 0.5]			
\tikzset{every node/.style={scale=0.7}}
\node[vertex] (1) {};
\node[vertex] (2) [below = of 1] {};
\node[vertex] (3) [right = of 2] {};
\node[vertex] (4) [above = of 3] {};

\draw[->] (4) -- (1);
\draw[->]	(4) -- (3);
\draw[->]	(3) -- (2);
\draw[->]	(1)--(2);
\end{tikzpicture} 
&
\raisebox{4em}{	\begin{tikzpicture}[baseline=-.5ex,scale=0.6]
\tikzset{every node/.style={scale=0.7}}
\foreach \x in {1,...,7, 9,10,11}{
\node[vertex] (\x) at (\x*30:3) {};
}
\node(12) at (12*30:3) {$\vdots$};
\node[rotate=-30] (8) at (8*30:3) {$\cdots$};

\foreach \x [evaluate={\y=int(\x+1);}] in {3,...,6,9}{
\draw[->] (\x)--(\y);
}
\foreach \x [evaluate={\y=int(\x-1);}] in {3,2,11}{
\draw[->] (\x)--(\y);
} 
\curlybrace[]{100}{290}{3.5};
\draw (190:4) node[rotate=0] {$p$};
\curlybrace[]{-50}{80}{3.5};
\draw (15:4) node[rotate=0] {$q$};
\end{tikzpicture}}
\\	
$\exdynA_{1,2}$ & $\exdynA_{1,3}$ & $\exdynA_{2,2}$ & $\exdynA_{p,q}$
\end{tabular}
\caption{Quivers of type~$\exdynA_{p,q}$.}\label{fig_example_Apq}
\end{figure}

In Tables~\ref{Dynkin} and~\ref{table_twisted_affine}, we present lists of standard affine root 
systems and twisted affine root systems, respectively. They are the same as 
presented in Tables Aff 1, Aff2, and Aff 3 of~\cite[Chapter~4]{Kac83}, and we 
denote by $\exdynX = \dynX^{(1)}$.
We notice that the number of vertices of the Dynkin diagram of type $\dynX_{n-1}$ is $n$ while we do not specify the vertex numbering. 

For a Dynkin type $\dynX$, we say that $\dynX$ is \emph{simply-laced} if its Dynkin diagram has only single edges, otherwise, $\dynX$ is \emph{non-simply-laced}.
Recall that the Cartan matrix associated to a Dynkin diagram $\dynX$ can be read directly from the diagram~$\dynX$ as follows:
\begin{center}
\setlength{\tabcolsep}{20pt}
\begin{tabular}{ccccc}
\begin{tikzpicture}[scale =.5, baseline=-.5ex]
\tikzset{every node/.style={scale=0.7}}

\node[vertex, fill=black, label=below:{$i$}] (2) {};
\node[vertex, label=below:{$j$}] (3) [right=of 2] {};

\draw (2)-- (3);

\end{tikzpicture}&
\begin{tikzpicture}[scale =.5, baseline=-.5ex]
\tikzset{every node/.style={scale=0.7}}

\node[vertex, fill=black, label=below:{$i$}] (2) {};
\node[vertex, label=below:{$j$}] (3) [right=of 2] {};

\draw[double line] (2)-- node{\scalebox{1.3}{$>$}} (3);

\end{tikzpicture}&
\begin{tikzpicture}[scale =.5, baseline=-.5ex]
\tikzset{every node/.style={scale=0.7}}

\node[vertex, fill=black, label=below:{$i$}] (2) {};
\node[vertex, label=below:{$j$}] (3) [right=of 2] {};

\draw[triple line] (2)-- node{\scalebox{1.3}{$>$}} (3);
\draw (2)--(3);

\end{tikzpicture}
&
\begin{tikzpicture}[scale=.5, baseline=-.5ex]
\tikzset{every node/.style={scale=0.7}}

\node[vertex, label=below:{$i$}] (1) {};
\node[vertex, label=below:{$j$}] (2) [right = of 1] {};

\draw[double distance = 2.7pt] (1)--(2);
\draw[double distance = 0.9pt] (1)-- node{\scalebox{1.3}{ $>$}} (2);			
\end{tikzpicture} 
&
\begin{tikzpicture}[scale=.5, baseline=-.5ex]
\tikzset{every node/.style={scale=0.7}}
\node[vertex, label=below:{$i$}] (1) {};
\node[vertex, label=below:{$j$}] (2) [right = of 1] {};

\draw[double line] (1)-- node[pos=0.2]{\scalebox{1.3}{ $<$}} 
node[pos=0.7]{\scalebox{1.3}{ $>$}} (2);

\end{tikzpicture}\\
$c_{i,j} = -1$
&$c_{i,j} = -2$
& $c_{i,j} = -3$
& $c_{i,j} = -4$
& $c_{i,j} = -2$ \\
$c_{j,i} = -1$
&$c_{j,i} = -1$
& $c_{j,i} = -1$
& $c_{j,i} = -1$
& $c_{j,i} = -2$
\end{tabular}
\end{center}
For example, the Cartan matrix of the diagram \begin{tikzpicture}[scale =.5, baseline=-.5ex]
\tikzset{every node/.style={scale=0.7}}

\node[vertex, label=below:{$1$}] (1) {};
\node[vertex, label=below:{$2$}] (2) [right = of 1] {};
\node[vertex, label=below:{$3$}] (3) [right=of 2] {};

\draw[triple line] (2)-- node{\scalebox{1.3}{$>$}} (3);
\draw (1)--(2);
\draw (2)--(3);

\end{tikzpicture} of type
$\exdynG_2$ is given by
\begin{equation}\label{eq_Cartan_G2}
\begin{bmatrix}
2 & -1 & 0 \\
-1 & 2 & -3 \\
0 & -1 & 2
\end{bmatrix}.
\end{equation}
Therefore, for each non-simply-laced Dynkin diagram $\dynX$, any exchange matrix $\qbasis$ of type $\dynX$ is \emph{not} skew-symmetric but skew-symmetrizable. Hence it never come from any quiver.

\begin{table}[ht]
\begin{center}
\begin{tabular}{l|l}
\toprule
$\Roots$ & Dynkin diagram \\
\midrule
$\exdynA_1$  &
\begin{tikzpicture}[scale=.5, baseline=-.5ex]
\tikzset{every node/.style={scale=0.7}}
\node[vertex] (1) {};
\node[vertex] (2) [right = of 1] {};

\draw[double line] (1)-- node[pos=0.2]{\scalebox{1.3}{ $<$}} 
node[pos=0.7]{\scalebox{1.3}{ $>$}} (2);

\end{tikzpicture}  \\ 
$\exdynA_{n-1}$ $(n \geq 3)$ &
\begin{tikzpicture}[scale=.5, baseline=-.5ex]
\tikzset{every node/.style={scale=0.7}}
\node[vertex] (1) {};
\node[vertex] (2) [right = of 1] {};
\node[vertex] (3) [right = of 2] {};
\node[vertex] (4) [right =of 3] {};
\node[vertex] (5) [right =of 4] {};			
\node[vertex] (6) [above =of 3] {};			

\draw (1)--(2)--(3)
(4)--(5)
(1)--(6)--(5);
\draw[dotted] (3)--(4);

\end{tikzpicture}  \\ 

$\exdynB_{n-1}$ $(n \geq 4)$ &
\begin{tikzpicture}[scale=.5, baseline=-.5ex]
\tikzset{every node/.style={scale=0.7}}

\node[vertex] (1) {};
\node[vertex] (2) [right = of 1] {};
\node[vertex] (3) [right = of 2] {};
\node[vertex] (4) [right =of 3] {};
\node[vertex] (5) [right =of 4] {};
\node[vertex] (6) [below left = 0.6cm and 0.6cm of 1] {};
\node[vertex] (7) [above left = 0.6cm and 0.6cm of 1] {};			

\draw (1)--(2)
(3)--(4)
(6)--(1)--(7);
\draw [dotted] (2)--(3);
\draw[double line] (4)-- node{\scalebox{1.3}{ $>$}} (5);
\end{tikzpicture}  \\ 		
$\exdynC_{n-1}$ $(n \geq 3)$ & 
\begin{tikzpicture}[scale=.5, baseline=-.5ex]
\tikzset{every node/.style={scale=0.7}}

\node[vertex] (1) {};
\node[vertex] (2) [right = of 1] {};
\node[vertex] (3) [right = of 2] {};
\node[vertex] (4) [right =of 3] {};
\node[vertex] (5) [right =of 4] {};			
\node[vertex] (6) [left =of 1] {};
\draw (1)--(2)
(3)--(4);
\draw [dotted] (2)--(3);
\draw[double line] (4)-- node{\scalebox{1.3}{ $<$}} (5);
\draw[double line] (1)-- node{\scalebox{1.3}{ $>$}} (6);			
\end{tikzpicture}  \\ 	

$\exdynD_{n-1}$ $(n \geq 5)$& 
\begin{tikzpicture}[scale=.5, baseline=-.5ex]
\tikzset{every node/.style={scale=0.7}}

\node[vertex] (1) {};
\node[vertex] (2) [right = of 1] {};
\node[vertex] (3) [right = of 2] {};
\node[vertex] (4) [right =of 3] {};			

\node[vertex] (5) [ below right = 0.6cm and 0.6cm of 4] {};
\node[vertex] (6) [above right=  0.6cm and 0.6cm of 4] {};

\node[vertex] (7) [above left = 0.6cm and 0.6cm of 1] {};
\node[vertex] (8) [below left=0.6cm and 0.6cm of 1] {};

\draw(1)--(2)
(3)--(4)--(5)
(4)--(6)
(7)--(1)--(8);
\draw[dotted] (2)--(3);
\end{tikzpicture}
\\ 
$\exdynE_6$ & 

\begin{tikzpicture}[scale=.5, baseline=-.5ex]
\tikzset{every node/.style={scale=0.7}}

\node[vertex] (1) {};
\node[vertex] (3) [right=of 1] {};
\node[vertex] (4) [right=of 3] {};
\node[vertex] (2) [above=of 4] {};
\node[vertex] (5) [right=of 4] {};
\node[vertex] (6) [right=of 5]{};
\node[vertex] (7) [above=of 2]{};

\draw(1)--(3)--(4)--(5)--(6)
(7)--(2)--(4);
\end{tikzpicture}			
\\
$\exdynE_7$& 
\begin{tikzpicture}[scale=.5, baseline=-.5ex]
\tikzset{every node/.style={scale=0.7}}

\node[vertex] (1) {};
\node[vertex] (8) [left=of 1] {};
\node[vertex] (3) [right=of 1] {};
\node[vertex] (4) [right=of 3] {};
\node[vertex] (2) [above=of 4] {};
\node[vertex] (5) [right=of 4] {};
\node[vertex] (6) [right=of 5]{};
\node[vertex] (7) [right=of 6]{};

\draw (8)--(1)--(3)--(4)--(5)--(6)--(7)
(2)--(4);
\end{tikzpicture}	
\\
$\exdynE_8$ & 
\begin{tikzpicture}[scale=.5, baseline=-.5ex]
\tikzset{every node/.style={scale=0.7}}

\node[vertex] (1) {};
\node[vertex] (3) [right=of 1] {};
\node[vertex] (4) [right=of 3] {};
\node[vertex] (2) [above=of 4] {};
\node[vertex] (5) [right=of 4] {};
\node[vertex] (6) [right=of 5]{};
\node[vertex] (7) [right=of 6]{};
\node[vertex] (8) [right=of 7]{};
\node[vertex] (9) [right=of 8]{};

\draw(1)--(3)--(4)--(5)--(6)--(7)--(8)--(9)
(2)--(4);
\end{tikzpicture}
\\
$\exdynF_4$ 
&
\begin{tikzpicture}[scale = .5, baseline=-.5ex]
\tikzset{every node/.style={scale=0.7}}

\node[vertex] (1) {};
\node[vertex] (2) [right = of 1] {};
\node[vertex] (3) [right = of 2] {};
\node[vertex] (4) [right =of 3] {};
\node[vertex] (5) [right =of 4] {};

\draw (1)--(2)
(2)--(3)
(4)--(5);
\draw[double line] (3)-- node{\scalebox{1.3}{$>$}} (4);
\end{tikzpicture}  \\
$\exdynG_2$
&
\begin{tikzpicture}[scale =.5, baseline=-.5ex]
\tikzset{every node/.style={scale=0.7}}

\node[vertex] (1) {};
\node[vertex] (2) [right = of 1] {};
\node[vertex] (3) [right=of 2] {};

\draw[triple line] (2)-- node{\scalebox{1.3}{$>$}} (3);
\draw (1)--(2);
\draw (2)--(3);

\end{tikzpicture}\\
\bottomrule
\end{tabular}
\end{center}
\caption{Dynkin diagrams of standard affine root systems}\label{Dynkin}
\end{table}

\begin{table}[ht]
\begin{tabular}{l|l}
\toprule
$\Roots$ & Dynkin diagram \\
\midrule
$\dynA_2^{(2)}$ & 
\begin{tikzpicture}[scale=.5, baseline=-.5ex]
\tikzset{every node/.style={scale=0.7}}

\node[vertex] (1) {};
\node[vertex] (2) [right = of 1] {};

\draw[double distance = 2.7pt] (1)--(2);
\draw[double distance = 0.9pt] (1)-- node{\scalebox{1.3}{ $<$}} (2);			
\end{tikzpicture} 
\\
$\dynA_{2(n-1)}^{(2)}$ ($n \ge 3$)
&
\begin{tikzpicture}[scale=.5, baseline=-.5ex]
\tikzset{every node/.style={scale=0.7}}

\node[vertex] (1) {};
\node[vertex] (2) [right = of 1] {};
\node[vertex] (3) [right = of 2] {};
\node[vertex] (4) [right =of 3] {};
\node[vertex] (5) [right =of 4] {};			
\node[vertex] (6) [left =of 1] {};
\draw (1)--(2)
(3)--(4);
\draw [dotted] (2)--(3);
\draw[double line] (4)-- node{\scalebox{1.3}{ $>$}} (5);
\draw[double line] (1)-- node{\scalebox{1.3}{ $>$}} (6);			
\end{tikzpicture}  \\ 
	$\dynA_{2(n-1)-1}^{(2)}$ ($n \ge 4$) & 
\begin{tikzpicture}[scale=.5, baseline=-.5ex]
\tikzset{every node/.style={scale=0.7}}

\node[vertex] (1) {};
\node[vertex] (2) [right = of 1] {};
\node[vertex] (3) [right = of 2] {};
\node[vertex] (4) [right =of 3] {};
\node[vertex] (5) [right =of 4] {};
\node[vertex] (6) [below left = 0.6cm and 0.6cm of 1] {};
\node[vertex] (7) [above left = 0.6cm and 0.6cm of 1] {};			

\draw (1)--(2)
(3)--(4)
(6)--(1)--(7);
\draw [dotted] (2)--(3);
\draw[double line] (4)-- node{\scalebox{1.3}{ $<$}} (5);
\end{tikzpicture}  \\ 	
$\dynD_{n}^{(2)}$ ($n \ge 3$) & 
\begin{tikzpicture}[scale=.5, baseline=-.5ex]
\tikzset{every node/.style={scale=0.7}}

\node[vertex] (1) {};
\node[vertex] (2) [right = of 1] {};
\node[vertex] (3) [right = of 2] {};
\node[vertex] (4) [right =of 3] {};
\node[vertex] (5) [right =of 4] {};			
\node[vertex] (6) [left =of 1] {};
\draw (1)--(2)
(3)--(4);
\draw [dotted] (2)--(3);
\draw[double line] (4)-- node{\scalebox{1.3}{ $>$}} (5);
\draw[double line] (1)-- node{\scalebox{1.3}{ $<$}} (6);			
\end{tikzpicture}  \\ 
$\dynE_6^{(2)}$  &
\begin{tikzpicture}[scale = .5, baseline=-.5ex]
\tikzset{every node/.style={scale=0.7}}

\node[vertex] (1) {};
\node[vertex] (2) [right = of 1] {};
\node[vertex] (3) [right = of 2] {};
\node[vertex] (4) [right =of 3] {};
\node[vertex] (5) [right =of 4] {};

\draw (1)--(2)
(2)--(3)
(4)--(5);
\draw[double line] (3)-- node{\scalebox{1.3}{$<$}} (4);
\end{tikzpicture}  \\
$\dynD_4^{(3)}$  &
\begin{tikzpicture}[scale =.5, baseline=-.5ex]
\tikzset{every node/.style={scale=0.7}}

\node[vertex] (1) {};
\node[vertex] (2) [right = of 1] {};
\node[vertex] (3) [right=of 2] {};

\draw[triple line] (2)-- node{\scalebox{1.3}{$<$}} (3);
\draw (1)--(2);
\draw (2)--(3);

\end{tikzpicture}\\
\bottomrule
\end{tabular}
\caption{Dynkin diagrams of twisted affine root 
systems}\label{table_twisted_affine}
\end{table}

One of the beauties of a skew-symmetrizable matrix of Dynkin type is 
that they are used to classify cluster algebras of finite type or 
finite mutation type.
A cluster algebra is said to be of \emph{finite type} if the cluster pattern has 
only a finite number of seeds.  
A wider class of cluster algebras consists of cluster algebras of \emph{finite 
mutation type}, which have finitely many exchange matrices but are allowed to 
have infinitely many seeds. 

\begin{theorem}[{\cite{FZ2_2003} for finite Dynkin type; 
\cite{FST08,FeliksonShapiroTumarkin12_JEMS} for affine Dynkin type}]
\label{thm_finite_type_classification}
Let $\initialseed = (\mathbf x_{t_0}, \qbasis_{t_0})$ be an initial seed. 
\begin{enumerate}
\item The exchange matrix $\qbasis_{t_0}$  is of finite Dynkin type if and only if there 
are only finitely many seeds in the cluster pattern $\{\seed_t\}_{t \in 
\mathbb{T}_n}$. 
\item If the exchange matrix $\qbasis_{t_0}$ is of affine Dynkin type, then there are only finitely 
many exchange matrices in the cluster pattern $\{\seed_t\}_{t \in 
\mathbb{T}_n}$ while there might be infinitely many seeds. 
\end{enumerate}
\end{theorem}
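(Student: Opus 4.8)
The plan is to treat Part~(1) as the Fomin--Zelevinsky finite type classification and Part~(2) as its combination with the classification of cluster algebras of finite mutation type. In both parts the working tool is to replace $\qbasis_{t_0}$, inside its mutation class, by an \emph{acyclic} representative and to keep track of the weighted graph (``diagram'') underlying each exchange matrix $\qbasis_t$, together with the denominators of the cluster variables written as Laurent polynomials in $\mathbf x_{t_0}$. The Caldero--Keller fact recalled above guarantees that ``of finite (resp.\ affine) Dynkin type'' is a property of the mutation class, so whenever $\qbasis_{t_0}$ is of a Dynkin type $\dynX$ I may assume it is acyclic with $C(\qbasis_{t_0})$ the Cartan matrix of $\dynX$, and even the bipartite orientation of the Dynkin diagram if that is convenient. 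For the forward implication of~(1), let $\Roots$ be the finite root system attached to $\dynX$, with simple roots $\alpha_1,\dots,\alpha_n$. By induction on the distance in $\mathbb{T}_n$, every non-initial cluster variable $x_{i;t}$ is a Laurent polynomial in $\mathbf x_{t_0}$ whose denominator is $\prod_j x_{j;t_0}^{d_j}$ for a positive root $\sum_j d_j\alpha_j\in\Roots$; extending $x_{j;t_0}\mapsto -\alpha_j$, one obtains a bijection between the set of all cluster variables and the finite set $\Roots_{\ge -1}$ of almost positive roots. Hence there are finitely many cluster variables; the clusters are exactly the facets of the cluster complex of type $\dynX$, which is finite (its geometric realization is the generalized associahedron); and since a seed is pinned down by its (ordered) cluster, there are at most $n!$ times finitely many seeds.

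For the converse in~(1) I would assume the cluster pattern has finitely many seeds. Then only finitely many exchange matrices, hence only finitely many diagrams, occur in the mutation class of $\qbasis_{t_0}$. At this point I would invoke Fomin--Zelevinsky's classification of the connected diagrams compatible with a finite seed pattern: such a diagram is mutation equivalent to an orientation of a Dynkin diagram of one of the types $\dynA,\dynB,\dynC,\dynD,\dynE,\dynF,\dynG$, and in particular to an acyclic one, since a Dynkin diagram is a tree. The argument behind this classification is the forbidden-configuration method: any diagram not on the Dynkin list either contains, or mutates to contain, a configuration --- an over-weighted edge, a non-Dynkin cycle (the smallest being the cycle underlying $\exdynA_{n-1}$), or an extended-Dynkin subdiagram --- that already forces an infinite seed pattern, a contradiction. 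Combined once more with the Caldero--Keller fact, this identifies $C(\qbasis_{t_0})$ with a finite Cartan matrix, i.e.\ $\qbasis_{t_0}$ is of finite Dynkin type; together with the forward implication this proves~(1).

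For~(2), an affine Dynkin type does not appear on the finite Dynkin list, so by~(1) the cluster pattern has infinitely many seeds, and it remains to show $\qbasis_{t_0}$ is of finite mutation type. I would go type by type through Tables~\ref{Dynkin} and~\ref{table_twisted_affine}. For the simply-laced surface types $\exdynA_{p,q}$ and $\exdynD_n$ one uses the model of (tagged) triangulations of an annulus, resp.\ of a once-punctured disk: the signed adjacency quiver of a triangulation is assembled from a fixed finite list of block ``puzzle pieces'', and the bounded topological complexity of the surface forces only finitely many such quivers up to isomorphism, even though there are infinitely many arcs and triangulations. For the exceptional simply-laced affine types $\exdynE_6,\exdynE_7,\exdynE_8$ one invokes their appearance on the finite list of exceptional finite-mutation-type quivers (a finite computer check). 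Finally, each non-simply-laced affine type $\dynY$ is the folding of a simply-laced affine type $\dynX$ by a finite group $G$ as in Table~\ref{table:all possible foldings}, so its exchange matrices are $G$-averages of the finitely many exchange matrices occurring upstairs and are therefore finite in number; alternatively one simply quotes \cite{FST08,FeliksonShapiroTumarkin12_JEMS}.

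\textbf{Main obstacle.} The genuine content lies in two places. First, the classification of the connected diagrams compatible with a finite seed pattern, underlying the converse of~(1): this is a finite but delicate induction on edge weights, where keeping the list of forbidden configurations complete and non-redundant is the subtle point. Second, for~(2), the surface/block argument showing that triangulation quivers of the annulus and of the once-punctured disk fall into finitely many isomorphism types, which requires the full block decomposition of triangulation adjacency quivers. Everything else --- the identification of denominator vectors with positive roots, the passage from ``finitely many clusters'' to ``finitely many seeds'', and the folding reduction for the non-simply-laced cases --- is routine given the facts already recalled in this section.
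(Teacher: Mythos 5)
This theorem is quoted from the literature (Fomin--Zelevinsky for part~(1), Fomin--Shapiro--Thurston and Felikson--Shapiro--Tumarkin for part~(2)); the paper supplies no proof of its own, so there is nothing internal to compare against. Your sketch is a faithful outline of the arguments in those references: denominator vectors and almost positive roots plus the $2$-finiteness/forbidden-subdiagram classification for~(1), and the surface/block decomposition, the exceptional finite-mutation list, and folding for~(2). Two small remarks: to pass from finitely many cluster variables to finitely many seeds you do not need the (stronger, and in full generality later) fact that a cluster determines its seed --- it suffices that exchange-matrix entries are bounded in finite type; and your ``$G$-average'' argument for the non-simply-laced affine cases quietly presupposes that every exchange matrix downstairs unfolds, which is exactly the content of the Felikson--Shapiro--Tumarkin unfolding results you rightly fall back on citing.
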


\begin{assumption}
Throughout this paper, we consider the case where $\qbasis$ is an acyclic 
matrix of \textit{affine} type.
\end{assumption}

\begin{remark}\label{rmk_transpose}
Let $\qbasis$ be a skew-symmetrizable matrix of size $n \times n$.
We have already seen that
\(
\mutation_k(\qbasis^T) = \mutation_k(\qbasis)^T
\)
for $k \in [n]$. 
Accordingly, for a skew-symmetrizable matrix $\qbasis$ of affine type, there is a bijective correspondence between the set of exchange matrices in the cluster pattern $\{ \seed_t \}_{t \in \mathbb{T}_n}$ given by $\initialseed = (\mathbf{x}_{t_0}, \qbasis)$ and that in the cluster pattern $\{\seed_t'\}_{t \in \mathbb{T}_n}$ given by $\initialseed' = (\mathbf{x}_{t_0}', \qbasis^T)$. 
We present pairs $(\dynX, \dynX')$ of affine Dynkin type whose Cartan matrices are transposed to each other.
\begin{gather*} 
(\exdynA_{n-1},\exdynA_{n-1}), \quad
(\exdynB_{n-1}, \dynA_{2(n-1)-1}^{(2)}),\quad
(\exdynC_{n-1}, \dynD_n^{(2)}), \quad
(\exdynD_{n-1}, \exdynD_{n-1}),\\
(\exdynE_n, \exdynE_n) ~~\text{ for }n=6,7,8, \quad
(\exdynF_4, \dynE_6^{(2)}),\quad
(\exdynG_2, \dynD_4^{(3)}), \quad
(\dynA_{2(n-1)}^{(2)}, \dynA_{2(n-1)}^{(2)}).
\end{gather*}
\end{remark}

\begin{remark}
The number of isomorphism classes of exchange matrices in the cluster pattern $\{\seed_t\}_{t \in 
\mathbb{T}_n}$ of type $\exdynA_{p,q}$ is provided in~\cite{BPRS11}:
\[
\begin{cases}
\displaystyle \frac{1}{2}
\sum_{k|p, k|q} \frac{\phi(k)}{p+q} {\binom{2p/k}{p/k}}{\binom{2q/k}{q/k}} & 
\text{ if } p \neq q;\\
\displaystyle \frac{1}{2}\left(
\frac{1}{2} {\binom{2p}{p}} + \sum_{k|p}\frac{\phi(k)}{4p}{\binom{2p/k}{p/k}}^2
\right)
& \text{ if } p = q,
\end{cases}
\]
where $\phi(k)$ is the number of $1 \le d \le k$ coprime to $k$, called 
Euler's totient function. 
Moreover, the following table is obtained 
from~\cite[Theorem~4.15]{MusikerStump2011}.
\begin{center} 
\begin{tabular}{l|ccccc} 
\toprule
Dynkin type $\dynX$ & $\exdynE_6$ & $\exdynE_7$ & $\exdynE_8$ & $\exdynF_4$ or $\dynE_6^{(2)}$ & 
$\exdynG_2$ or $\dynD_4^{(3)}$ \\
\midrule
\# of exchange matrices & $130$ & $1080$ & $7660$ & $60$ & $6$ \\
\bottomrule
\end{tabular}
\end{center}
The number of exchange matrices in the cluster pattern of other affine Dynkin type 
is conjectured in~\cite[Conjecture~4.14]{MusikerStump2011} which is still an 
open problem to the authors' knowledge. 
\end{remark}

For the sake of convenience, we define the restriction of a seed $\seed = (\mathbf x, \quiver)$ as follows.
Suppose that $\quiver$ is a quiver on~$[n]$. 
For each subset $I=\{i_1,\dots, i_k\}\subset[n]$ with $i_1<i_2<\dots<i_k$, the 
restriction $\seed|_I$ is defined as the pair
\[
\seed|_I = (\bfx|_I, \quiver|_I),
\]
where $\bfx|_I=(x_{i_1},\dots, x_{i_k})$ and $\quiver|_I$ is the induced 
subquiver with the set $I$ of vertices. Also, we denote by $\qbasis|_I$ the 
submatrix of $\qbasis$ obtained by considering the columns and rows in $I$ 
simultaneously, that is, $\qbasis|_I = (b_{i,j})_{i,j \in I}$. The following 
is an example of restriction of $\quiver$ of type~$\exdynD_{2n}$,
\begin{align*}
\quiver|_{\{4,\dots,2n-2\}} = \begin{tikzpicture}[baseline=-.5ex]
\draw[fill, lightgray]
(0,0) circle(2pt) node (A1) {}
(120:1) circle(2pt) node (A2) {}
(240:1) circle(2pt) node (A3) {}
(4,0) circle (2pt) node (A2n-1) {}
(4,0) ++(60:1) circle(2pt) node (A2n) {}
(4,0) ++(-60:1) circle(2pt) node (A2n+1) {};
\draw[fill]
(1,0) circle(2pt) node (A4) {}
(2,0) circle(2pt) node (An+1) {}
(3,0) circle (2pt) node (A2n-2) {};
\draw[->,lightgray] (A2) node[above] {$2$} -- (A1) node[left] 
{$1$};
\draw[->,lightgray]  (A3) node[below] {$3$} -- (A1);
\draw[->,lightgray] (A4) -- (A1);
\draw[dashed] (An+1) node[above] {$n+1$} -- (A4) node[below] {$4$};
\draw[dashed] (A2n-2) node[below] {$2n-2$} -- (An+1);
\draw[->,lightgray] (A2n-2) -- (A2n-1) node[right] {$2n-1$};
\draw[->,lightgray] (A2n) node[above] {$2n$} -- (A2n-1);
\draw[->,lightgray] (A2n+1) node[below] {$2n+1$} -- (A2n-1);
\end{tikzpicture}
\end{align*}
which is of type~$\dynA_{2n-5}$.

We enclose this section by recalling the following result for later use.
\begin{lemma}[{\cite{ReadingStella20} and also 
see~\cite[Theorem~2.12]{ABL2021c}}]\label{lemma_on_facets}
Let $\initialseed = (\mathbf x_{t_0}, \quiver_{t_0})$ be an initial seed with a quiver $\quiver_{t_0}$ on~$[n]$. 
Suppose that $\quiver_{t_0}$ is an acyclic quiver of affine type. 
Then for any seed $\seed = (\mathbf x, \quiver)$ in the cluster pattern given by 
the initial seed $\initialseed$, there exists
an index $i \in [n]$ such that the quiver $\quiver$ is obtained from 
$\quiver_{t_0}$ by applying mutations on vertices $[n] \setminus \{i\}$. 
\end{lemma}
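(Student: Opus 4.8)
The plan is to categorify: since $\quiver_{t_0}$ is acyclic, Definition~\ref{definition:type} together with the Caldero--Keller result quoted above lets us assume $\quiver_{t_0}=Q$ is an acyclic orientation of an affine Dynkin diagram, and then use the Auslander--Reiten translation $\tau$ (which is an auto-equivalence of the $2$-Calabi--Yau cluster category $\cC$ of $kQ$) to \emph{rotate} an arbitrary cluster until it shares an initial cluster variable with $\initialseed$.

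First I would set up the dictionary. For $Q$ acyclic, the seeds of the cluster pattern given by $\initialseed$ correspond to the cluster-tilting objects of $\cC$, the exchange matrix of a seed being the quiver of the endomorphism algebra of the corresponding cluster-tilting object and mutation at $k$ being mutation at the $k$-th indecomposable summand; here $\initialseed$ corresponds to $T_0=P_1\oplus\cdots\oplus P_n$, the sum of the indecomposable projective $kQ$-modules. Under this dictionary, mutating only at vertices of $[n]\setminus\{i\}$ means never exchanging the summand $P_i$. Because $Q$ is affine, deleting the vertex $i$ from its underlying diagram leaves a disjoint union of \emph{finite}-type Dynkin diagrams, so the cluster-tilting objects of $\cC$ that contain $P_i$ are exactly those of a finite-type cluster category; their exchange graph is connected and contains $T_0$. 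Hence the set of quivers obtainable from $\quiver_{t_0}$ by mutating only at $[n]\setminus\{i\}$ equals the set of quivers of cluster-tilting objects of $\cC$ that contain $P_i$. It therefore suffices to prove: \emph{every cluster-tilting object of $\cC$ has the same quiver as one containing an indecomposable projective.}

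To establish this I would invoke two standard facts about affine type. (i) The \emph{regular} indecomposables of $\cC$ occupy tubes of ranks $p_1,\dots,p_s$ with $\sum_j(p_j-1)=n-2$; a cluster-tilting object has at most $p_j-1$ summands in the tube of rank $p_j$ and none in a homogeneous tube, hence at most $n-2$ regular summands, and so at least two \emph{transjective} summands. (ii) In $\cC$ every transjective indecomposable lies in the $\tau$-orbit of some $P_k$ (there are exactly $n$ such orbits; one has $\tau P_k=P_k[1]$ and $\tau^2 P_k=I_k$ in $\cC$). Now take any cluster-tilting object $T$: by (i) it has a transjective summand, which by (ii) equals $\tau^{a}P_k$ for some $k\in[n]$ and $a\in\Z$. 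Then $\tau^{-a}T$ is again a cluster-tilting object of $\cC$; it contains $\tau^{-a}(\tau^{a}P_k)=P_k$, and, $\tau^{-a}$ being an auto-equivalence, it has the same quiver as $T$. By the previous paragraph the quiver of $\tau^{-a}T$, hence that of $T$, is obtained from $\quiver_{t_0}$ by mutating only at $[n]\setminus\{k\}$, which proves the lemma.

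The main obstacle is the bookkeeping in the second step: one must be sure that "cluster-tilting objects containing $P_i$" really is the finite-type picture attached to $Q\setminus i$ — equivalently, that the link of the vertex $x_{i;t_0}$ in the exchange graph is connected — and that the $\tau$-orbit description in (ii) holds uniformly across all affine types, so that \emph{any} transjective summand can be rotated onto some $P_k$; fact (i) is routine tube combinatorics but should be cited precisely. Alternatively, since Theorem~\ref{thm_finite_type_classification}(2) guarantees that only finitely many quivers occur in the pattern, one can bypass the cluster category entirely and verify the statement type by type from the explicit mutation classes of the affine quivers — elementary, but far less transparent.
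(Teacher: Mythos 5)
First, a caveat on the comparison itself: the paper does not prove Lemma~\ref{lemma_on_facets} at all --- it is imported from \cite{ReadingStella20} and the forthcoming \cite{ABL2021c}, where the argument runs through the almost-positive-roots/$\mathbf{g}$-vector fan of affine type rather than the cluster category --- so your proposal can only be judged on its own terms. The categorical skeleton is sound for an \emph{unlabelled} version of the statement: the tubular count $\sum_j(p_j-1)=n-2$ and the bound of $p_j-1$ rigid summands per tube do force at least two transjective summands; the transjective component is a single copy of $\Z Q$, so every transjective indecomposable is $\tau^aP_k$; and the identification of the cluster-tilting objects containing $P_i$ with a finite-type exchange graph (via $2$-Calabi--Yau reduction, using that every proper subdiagram of an affine diagram is of finite type), together with its connectedness, is standard, though as you note it must be cited carefully.

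The genuine gap is the distinction between labelled and unlabelled quivers. The lemma asserts an equality $\quiver=(\mutation_{j_L}\cdots\mutation_{j_1})(\quiver_{t_0})$ of quivers \emph{on $[n]$} with all $j_m\neq i$, and this is exactly how it is used in Section~\ref{section:proof of admissibility}, where the authors restrict $\quiver$ to explicit vertex subsets such as $[2n]\setminus\{\ell\}$ or the sets $R,S$ and identify the restriction with a mutation of the corresponding restriction of $\quiver_{t_0}$. A mutation sequence from $\initialseed$ avoiding the index $i$ necessarily leaves the $i$-th cluster variable equal to $x_i$, i.e.\ keeps $P_i$ in position $i$. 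Your rotation $\tau^{-a}$ places $P_k$ at whatever position the summand $\tau^{a}P_k$ occupied in the seed $\seed$, which need not be position $k$; the mutation path from $T_0$ to $\tau^{-a}T$ that never exchanges $P_k$ keeps $P_k$ at position $k$ and therefore realizes the quiver of $\tau^{-a}T$ only with a labelling that in general differs from $\quiver$ by a nontrivial permutation of $[n]$. Since that permutation need not be an automorphism of $\quiver_{t_0}$, you conclude only that $\quiver$ is \emph{isomorphic} to a quiver obtained from $\quiver_{t_0}$ by mutations avoiding some vertex, which is strictly weaker than what the paper uses. Closing this gap requires controlling positions and not just isomorphism classes --- e.g.\ via $\mathbf{g}$-vectors as in \cite{ReadingStella20} --- and is not routine bookkeeping. (The fallback in your last paragraph, checking the statement from the finite list of exchange matrices, also does not work as stated for the infinite families $\exdynA_{n,n}$ and $\exdynD_n$.)
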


\section{Invariance and admissibility of quivers}
\label{section_inv_and_admiss_of_quivers}
Under certain condition, one can fold quivers to produce new ones. 
This procedure is used to study quivers of non-simply-laced affine type from 
those of simply-laced affine type. 
In this section, we recall from~\cite{FWZ_chapter45} the invariance and 
admissibility of a finite group action on the quiver. 
We also refer the reader to~\cite{Dupont08}.

Let $\quiver$ be a quiver on $[n]$
and let $G$ be a finite group acting on the set $[n]$.
For $i, i' \in [n]$, the notation $i \sim i'$ will mean that $i$ and $i'$ lie in the same $G$-orbit. To study folding of exchange matrices or quivers, we prepare some terminologies.

For each $g\in G$, let $\quiver'=g\cdot\quiver$ be the quiver whose adjacency matrix $\qbasis(\quiver')$ is given by
\[
\qbasis(\quiver')=(b'_{ij}),\qquad b_{ij}=b_{g(i),g(j)}.
\] 

\begin{definition}[{cf.~\cite[\S4.4]{FWZ_chapter45} and~\cite[\S 
3]{Dupont08}}] \label{definition:admissible quiver}
Let $\quiver$ be a quiver on $[n]$ and 
let $G$ be a finite group acting on the set $[n]$.
\begin{enumerate} 
\item A quiver $\quiver$ is \emph{$G$-invariant} if $g\cdot\quiver=\quiver$ for each $g\in G$.
\item A $G$-invariant quiver $\quiver$ is \emph{$G$-admissible} if for any $i\sim i'$,
\begin{enumerate}[ref = (\alph*)]
\item $b_{i,i'} = 0$; 
\label{def_admissible_3}
\item $b_{i,j} b_{i',j} \geq 0$ for any $j$,
\label{def_admissible_4}
\end{enumerate}
where $\qbasis(\quiver)=(b_{i,j})$.
\end{enumerate}        
\begin{remark}
The $G$-admissibility can also be defined for an exchange matrix and a seed, and furthermore those with frozen vertices.
Note that this definition is simplified due to Assumption~\ref{assumption:all variables are mutable}.
\end{remark} 

\end{definition}
For a $G$-admissible quiver $\quiver$, we define the matrix $\qbasis^G = 
\qbasis(\quiver)^G = (b_{I,J}^G)$ whose rows and columns are 
labeled by the $G$-orbits by
\[
b_{I,J}^G = \sum_{i \in I} b_{i,j}
\]
where $I$ and $J$ are $G$-orbits and $j$ is an arbitrary index in $J$. We then say $\qbasis^G$ is obtained 
from $\qbasis$ (or from the quiver $\quiver$) by \textit{folding} with respect 
to the given $G$-action.

\begin{example}\label{example_A22_to_A1}
Let $\quiver$ be a quiver of type $\exdynA_{2,2}$ given as follows:
\[
\begin{tikzpicture}[baseline=-.5ex]
\foreach \x in {1,...,4}{
\draw[fill] (90*\x:1) node (A\x) {} circle (2pt);
}

\draw (A1) node[above] {$1$} 
(A2) node[left] {$2$}
(A3) node[below] {$4$}
(A4) node[right] {$3$};

\draw[->] (A1) to (A2);
\draw[->] (A2) to (A3);
\draw[->] (A4) to (A3);
\draw[->] (A1) to (A4);
\end{tikzpicture}
\stackrel{\mu_1}{\longrightarrow}
\begin{tikzpicture}[baseline=-.5ex]
\foreach \x in {1,...,4}{
\draw[fill] (90*\x:1) node (A\x) {} circle (2pt);
}

\draw (A1) node[above] {$1$} 
(A2) node[left] {$2$}
(A3) node[below] {$4$}
(A4) node[right] {$3$};

\draw[<-] (A1) to (A2);
\draw[->] (A2) to (A3);
\draw[->] (A4) to (A3);
\draw[<-] (A1) to (A4);
\end{tikzpicture} \eqqcolon \quiver 
\]
The adjacency matrix $\qbasis(\quiver)$ of $\quiver$ is  
\[
\qbasis(\quiver) = 
\begin{bmatrix}
0 & -1 & -1 & 0 \\
1 & 0 & 0 & 1 \\
1 & 0 & 0 & 1 \\
0 & -1 & -1 & 0
\end{bmatrix}.
\]
Suppose that the finite group $G = \Z/2\Z$ acts on the set $[4] = \{1,\dots,4\}$ such that the generator sends $1 \mapsto 4 \mapsto 1$ and $2 \mapsto 3 \mapsto 2$. Then, the quiver $\quiver$ is $G$-admissible, and by setting $I_1 = \{1,4\}$ and $I_2 = \{2,3\}$, we obtain
\[
\begin{split}
b_{I_1,I_2}^G &= \sum_{i \in I_1} b_{i,2} = b_{1,2} + b_{4,2} = -2, \\
b_{I_2,I_1}^G &= \sum_{i \in I_2} b_{i,1} = b_{2,1} + b_{3,1} = 2. 
\end{split}
\]
This provides
\[
\qbasis^G = \begin{bmatrix}
0 & -2 \\ 2 & 0
\end{bmatrix},
\]
whose Cartan counterpart is the Cartan matrix of type $\exdynA_1$, and moreover, it is the adjacency matrix of the quiver
\(
\begin{tikzpicture}[baseline=-.5ex]
\draw[fill] (0,0) node (A1) {} circle (2pt);
\draw[fill] (1,0) node (A2) {} circle (2pt);
\draw (A1) node[above] {$1$} ;
\draw (A2) node[above] {$2$} ;
\draw[-Implies,double distance=2pt] (A2)--(A1);
\end{tikzpicture}
\).
\end{example}
In Example~\ref{example_A22_to_A1}, the folded matrix $\qbasis^G$ is again skew-symmetric. However, as we will see in the example below, the folded matrix is not skew-symmetric but skew-symmetrizable in general. 
\begin{example}\label{example_D4_to_G2}
Let $\quiver$ be a quiver of type~$\exdynE_6$ whose adjacency matrix 
$\qbasis(\quiver)$ is given as follows.
\[
\qbasis(\quiver) = \begin{bmatrix}
0 & 1 & 0 & 1 & 0 & 1 & 0\\
-1 & 0 & -1 & 0 & 0 & 0 & 0 \\
0 & 1 & 0 & 0 & 0 & 0 & 0 \\
-1 & 0 & 0 & 0 & -1 & 0 & 0 \\
0 & 0 & 0  & 1 & 0 & 0 &  0\\
-1 & 0 & 0 & 0 & 0 & 0 & -1 \\
0 & 0 & 0 & 0 & 0 & 1 & 0 
\end{bmatrix}.
\]
Suppose that the finite group $G = \Z / 3 \Z$ acts on the set $[7] = \{1,\dots,7\}$ as depicted in 
\ref{fig_E6_Z3} of Appendix~\ref{appendix_actions_on_Dynkin_diagrams}. 
One may check that the quiver $\quiver$ is 
$G$-admissible. By setting $I_1 = \{1\}$, $I_2 = \{2, 4, 6\}$, and $I_3 = \{3, 
5, 7\}$, we obtain
\[
\begin{split}
b_{I_1,I_2}^G &= \sum_{i \in I_1} b_{i,2} = b_{1,2} = 1, \\
b_{I_1, I_3}^G &= \sum_{i \in I_1} b_{i,3} = b_{1,3} = 0, \\
b_{I_2, I_3}^G &= \sum_{i \in I_2} b_{i,3} = b_{2,3} + b_{4,3} + b_{6,3} = -1, 
\\
b_{I_2, I_1}^G &= \sum_{i \in I_2} b_{i,1} = b_{2,1} + b_{4,1} + b_{6,1} = -3, 
\\
b_{I_3, I_1}^G &= \sum_{i \in I_3} b_{i,1} = b_{3,1} + b_{5,1} + b_{7,1} = 0, \\
b_{I_3,I_2}^G &= \sum_{i \in I_3} b_{i, 2} = b_{3,2} + b_{5,2} + b_{7,2} = 1.
\end{split}
\]
Accordingly, we obtain the matrix 
\[
\qbasis^G = \begin{bmatrix} 
0 & 1 & 0 \\ -3 & 0 & -1 \\ 0 & 1 & 0 
\end{bmatrix}
\] 
whose Cartan counterpart is isomorphic to the Cartan matrix of type~$\exdynG_2$ (cf.~\eqref{eq_Cartan_G2}).
\end{example}

For a $G$-admissible quiver $\quiver$ and a $G$-orbit $I$, we consider 
a composition of mutations given by
\[
\mutation_I = \prod_{i \in I} \mutation_i
\]
which is well-defined because of the definition of admissible quivers. We call 
$\mutation_I$ an \textit{orbit mutation}. If $\mutation_I(\quiver)$ is again 
$G$-admissible, then we have that
\begin{equation*}
(\mutation_I(\qbasis))^G = \mutation_I(\qbasis^G).
\end{equation*}
We notice that the quiver $\mutation_I(\quiver)$ may \textit{not} be 
$G$-admissible in general (cf. Example~\ref{example_non_admissible}). 
Therefore, we present the following definition.
\begin{definition}
Let $G$ be a group acting on the vertex set of a quiver $\quiver$. We say that 
$\quiver$ is \emph{globally foldable} with respect to $G$ if $\quiver$ is 
$G$-admissible, and moreover, for any sequence of $G$-orbits 
$I_1,\dots,I_\ell$, the quiver $(\mutation_{I_\ell} \dots 
\mutation_{I_1})(\quiver)$ is $G$-admissible.
\end{definition}

\begin{example}\label{example_non_admissible}
Let $\quiver$ be a quiver with $6$ vertices given as follows.
\begin{center}
$\quiver = $
\begin{tikzpicture}[baseline=-.5ex]
\foreach \x in {1,...,6}{
\draw[fill] (30+60*\x:1) node (A\x) {} circle (2pt);
}

\draw (A1) node[above] {$1$} 
(A2) node[above left] {$2$}
(A3) node[below left] {$3$}
(A4) node[below] {$4$}
(A5) node[below right] {$5$}
(A6) node[above right] {$6$};

\draw[->] (A1) to (A2);
\draw[->] (A2) to (A3);
\draw[->] (A3) to (A4);
\draw[->] (A4) to (A5);
\draw[->] (A5) to (A6);
\draw[->] (A6) to (A1);

\end{tikzpicture}
$\quad \stackrel{\mutation_1\mutation_4}{\longrightarrow} \quad \mutation_1\mutation_4(\quiver) = $
\begin{tikzpicture}[baseline=-.5ex]
\foreach \x in {1,...,6}{
\draw[fill] (30+60*\x:1) node (A\x) {} circle (2pt);
}

\draw (A1) node[above] {$1$} 
(A2) node[above left] {$2$}
(A3) node[below left] {$3$}
(A4) node[below] {$4$}
(A5) node[below right] {$5$}
(A6) node[above right] {$6$};

\draw[->] (A1) to (A6);
\draw[->] (A2) to (A1);
\draw[->] (A2) to (A3);
\draw[->] (A3) to (A5);
\draw[->] (A4) to (A3);
\draw[->] (A5) to (A4);
\draw[->] (A5) to (A6);
\draw[->] (A6) to (A2);
\end{tikzpicture}
\end{center}
Consider an action of $G = \Z/2\Z$ such that $1 \sim 4$, $2 \sim 5$, and $3 
\sim 6$. One can easily see that the quiver $\quiver$ is $G$-invariant, and 
moreover, is $G$-admissible. However, by considering mutations on vertices $1$ 
and $4$, we obtain the quiver $\mutation_1\mutation_4(\quiver)$ which is $G$-invariant but 
not $G$-admissible. This is because for indices $2 \sim 5$ and $3$, we have
\[
b_{2,3} b_{5,3} = 1 \cdot (-1) = -1 \not\ge 0
\]
which violates the condition \ref{def_admissible_4} in 
Definition~\ref{definition:admissible quiver}(2). 
Accordingly, the quiver $\quiver$ is $G$-admissible but not globally foldable 
with respect to $G$. 
\end{example}

As we saw in Example~\ref{example_non_admissible}, a $G$-invariant quiver may 
not be $G$-admissible. The following theorem says that the converse holds when 
we consider the foldings presented in Table~\ref{table:all possible foldings}. 
In Appendix~\ref{appendix_actions_on_Dynkin_diagrams}, we describe the finite 
group action explicitly for each triple $(\dynX,G,\dynY)$. 
The proof of the following theorem will be given in Section~\ref{section:proof 
of admissibility}.
\begin{theorem}\label{thm_invaraint_implies_admissible}
Let $(\dynX, G, \dynY)$ be a triple given by a column of Table~\ref{table:all 
possible foldings}.
Let $\quiver$ be a quiver of type~$\dynX$. 
If $\quiver$ is $G$-invariant, then it is $G$-admissible. Indeed,  
$G$-invariance and $G$-admissibility are equivalent.
\end{theorem}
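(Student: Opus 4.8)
The plan is to work through the columns of Table~\ref{table:all possible foldings} one type at a time, relying on two structural facts that hold uniformly: every type $\dynX$ in the table is of finite mutation type (Theorem~\ref{thm_finite_type_classification}), so the mutation class of a quiver of type $\dynX$ has only finitely many quivers up to isomorphism; and Lemma~\ref{lemma_on_facets}, which forces every quiver of type $\dynX$ to be a mutation of the acyclic affine model that avoids a single vertex, so that the local pictures which can occur are severely limited. Since $G$-admissibility is defined only for a $G$-invariant quiver, the reverse implication is immediate from Definition~\ref{definition:admissible quiver}; the content is that a $G$-invariant quiver of type $\dynX$ meets conditions~\ref{def_admissible_3} and~\ref{def_admissible_4}.

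I would first see how far $G$-invariance together with skew-symmetry alone get us. If some $h\in G$ swaps indices $i\sim i'$, then $b_{i,i'}=b_{h(i),h(i')}=b_{i',i}=-b_{i,i'}$, so $b_{i,i'}=0$; this establishes condition~\ref{def_admissible_3} for every column with group $\Z/2\Z$ or $(\Z/2\Z)^2$ (for a size-four orbit in the Klein four-group case each pair is still interchanged by a double transposition), leaving only the two $\Z/3\Z$ columns to treat separately for~\ref{def_admissible_3}. For condition~\ref{def_admissible_4}, suppose it fails at a vertex $j$ --- say $b_{i,j}>0>b_{i',j}$ with $g(i)=i'$ --- then $g(j)=j$ would give $b_{i',j}=b_{i,j}>0$, a contradiction, so $g(j)\neq j$, the four indices $i,j,i',g(j)$ are distinct, and the arrows $i\to j\to i'\to g(j)\to i$ form a directed $4$-cycle carried to itself by $g$ acting as a half-turn. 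Hence, for the $\Z/2\Z$ columns, the theorem reduces to the single statement that a $G$-invariant quiver of type $\dynX$ contains no half-turn-symmetric directed $4$-cycle.

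Next I would dispose of the columns of bounded rank --- $\exdynE_6$, $\exdynE_7$, $\exdynD_4$, $\exdynA_{2,2}$, which includes both $\Z/3\Z$ columns --- by a finite check: enumerate the finitely many quivers of type $\dynX$, keep the $G$-invariant ones, and verify~\ref{def_admissible_3}--\ref{def_admissible_4} on each (a combinatorial version of the SageMath computation mentioned in the introduction). The remaining columns are the infinite families $\exdynA_{n,n}$, $\exdynD_n$, $\exdynD_{2n}$, all of surface type and all carrying a $\Z/2\Z$-action, and for these I would exclude the half-turn-symmetric directed $4$-cycle through the surface model: a quiver of type $\dynX$ is a tagged triangulation of an annulus or of a twice-punctured disk, the $\Z/2\Z$-action is a rotation or reflection of the surface, $G$-invariance is symmetry of the triangulation, a directed $4$-cycle corresponds to one of a short list of local arc patterns, and one checks that none of these patterns can sit half-turn-symmetrically inside a symmetric triangulation; alternatively one can induct on the rank within each family using the restriction operation, inspecting only the arrows incident to the deleted vertices.

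The step I expect to be the crux is precisely this exclusion of a symmetric directed $4$-cycle in the infinite families (and, in the $\Z/3\Z$ columns, of a directed cycle inside a single $G$-orbit, although there the bounded rank makes it a finite verification). Directed $4$-cycles do occur in generic quivers of these surface types, so ruling out the symmetric one must combine both inputs: finite mutation type limits the quiver to its few admissible local shapes, and Lemma~\ref{lemma_on_facets} pins it close to the acyclic affine model, after which $G$-invariance forces any putative bad cycle into a position that one of these two constraints forbids. Once the symmetric directed cycles are ruled out, the surviving $G$-invariant quivers satisfy~\ref{def_admissible_3} and~\ref{def_admissible_4} by a bounded, type-specific local check, which completes the proof.
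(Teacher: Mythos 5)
Your top-level reductions coincide with the paper's: deriving condition~\ref{def_admissible_3} for order-two elements from $b_{i,g(i)}=b_{g(i),i}=-b_{i,g(i)}$ is exactly Lemma~\ref{lemma:condition_3_holds_of_order_2}, and the observation that a failure of condition~\ref{def_admissible_4} forces a $g$-symmetric directed $4$-cycle on $\{i,j,g(i),g(j)\}$ is the launching point of every one of the paper's ``no bigons'' lemmas. The gaps are in how the two hard parts are discharged. For the bounded-rank columns you propose to enumerate the mutation class and check invariant quivers one by one; that is precisely the \emph{experimental proof} the paper sets aside (the classes for $\exdynE_6$ and $\exdynE_7$ have $130$ and $1080$ exchange matrices), and you do not carry it out. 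This matters most for the two $\Z/3\Z$ columns, where condition~\ref{def_admissible_3} is \emph{not} automatic and your order-two argument gives nothing: the paper proves it there (Lemmas~\ref{lemma:no monogons in affine E6 with Z/3Z-action} and~\ref{lemma:no monogons in affine D}) by locating, inside any putative counterexample, a complete four-vertex subquiver containing a cyclic triangle of type $(n,n,n)$ and contradicting mutation-finiteness via Corollary~\ref{corollary:complete 4-graph}. Some argument of this kind, or the actual enumeration, is required; as written those cases are unproved.

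For the infinite families your surface-model route is genuinely different from the paper's, which never passes to triangulations but instead combines Lemma~\ref{lemma_on_facets} (the quiver is reachable avoiding some vertex $\ell$, hence also $\tau(\ell)$), restriction to subquivers of type~$\dynA$ where directed $4$-cycles cannot occur, and reductions to mutation-infinite quivers or to $\quiver(2\exdynA_1)$, the latter excluded by Corollary~\ref{corollary:seed restriction}. Your route leaves two steps unjustified: first, a $G$-invariant quiver of surface type is only asserted, not shown, to come from a $G$-symmetric triangulation --- the quiver determines a tagged triangulation only up to the mapping class group, so lifting the quiver automorphism to an involution of the annulus or twice-punctured disk preserving some triangulation needs an argument; second, the claim that no directed $4$-cycle pattern ``can sit half-turn-symmetrically'' is exactly the content of the theorem in these cases and is left as an assertion. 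The paper's hardest instance, $(\exdynD_{2n},\Z/2\Z,\exdynB_n)$ in Lemma~\ref{lemma:no bigons in affine D2n yielding twisted affine Bn}, requires a shortest-path argument positioning the symmetric $4$-cycle relative to the fixed vertex $n+1$ and the missed orbit $\{\ell,\tau(\ell)\}$; nothing in your sketch substitutes for it. The skeleton is right, but the crux is asserted rather than proved.
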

As a direct consequence of the above theorem, we obtain the following.
\begin{corollary}
Let $(\dynX, G, \dynY)$ be a triple given by a column of Table~\ref{table:all 
possible foldings}.
Then any quiver of type~$\dynX$ is globally foldable with respect to $G$. 
\end{corollary}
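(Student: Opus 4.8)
The plan is to deduce this purely formally from Theorem~\ref{thm_invaraint_implies_admissible}. Since being globally foldable with respect to $G$ presupposes $G$-admissibility, and $G$-admissibility is defined only for $G$-invariant quivers, I read the statement as: every $G$-invariant quiver $\quiver$ of type~$\dynX$ is globally foldable. By Theorem~\ref{thm_invaraint_implies_admissible} such a $\quiver$ is $G$-admissible, so each orbit mutation $\mutation_I$ ($I$ a $G$-orbit) is a well-defined order-independent composition $\prod_{i\in I}\mutation_i$. The first point I would isolate is the \emph{equivariance of mutation under relabeling}: for any permutation $\sigma$ of $[n]$ one has $\sigma\cdot\mutation_k(\quiver)=\mutation_{\sigma^{-1}(k)}(\sigma\cdot\quiver)$, which is immediate by inspecting the mutation rule for $\qbasis$ entrywise (relabeling rows and columns by $\sigma$ intertwines mutation at $k$ with mutation at $\sigma^{-1}(k)$). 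Taking $\sigma=g\in G$, composing over $k\in I$, and using that a $G$-orbit is setwise fixed by $G$ gives $g\cdot\mutation_I(\quiver)=\mutation_I(g\cdot\quiver)$; if $\quiver$ is $G$-invariant this says $g\cdot\mutation_I(\quiver)=\mutation_I(\quiver)$, i.e.\ \textbf{orbit mutation preserves $G$-invariance}.

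I would then induct on the length $\ell$ of a sequence of $G$-orbits $I_1,\dots,I_\ell$, carrying the joint hypothesis that $\quiver_\ell:=(\mutation_{I_\ell}\cdots\mutation_{I_1})(\quiver)$ is both of type~$\dynX$ and $G$-invariant. The case $\ell=0$ is the hypothesis on $\quiver$. For the step, assume $\quiver_{\ell-1}$ is of type~$\dynX$ and $G$-invariant; Theorem~\ref{thm_invaraint_implies_admissible} makes it $G$-admissible, so $\mutation_{I_\ell}(\quiver_{\ell-1})$ is defined. This quiver is mutation equivalent to $\quiver$, hence of type~$\dynX$ (``of type~$\dynX$'' is invariant under mutation equivalence by Definition~\ref{definition:type} and the discussion following it, including the finer $\{p,q\}$-invariant in the $\exdynA_{p,q}$ case), and it is $G$-invariant by the equivariance statement above; this closes the induction. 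Feeding $\quiver_\ell$ --- a $G$-invariant quiver of type~$\dynX$ --- into Theorem~\ref{thm_invaraint_implies_admissible} once more shows $\quiver_\ell$ is $G$-admissible for every sequence of $G$-orbits, which is exactly the definition of $\quiver$ being globally foldable with respect to $G$.

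I do not expect a genuine obstacle here: the content is entirely in Theorem~\ref{thm_invaraint_implies_admissible}, and the rest is bookkeeping. The only points requiring a little care are (i) that the induction must carry ``$G$-invariant'' and ``of type~$\dynX$'' simultaneously, since each successive $\mutation_{I_j}$ is only legitimate once the previous quiver is known to be $G$-admissible --- and that legitimacy is supplied by the theorem at each stage --- and (ii) the entrywise check of relabeling-equivariance of mutation, which is routine from the formulas $b_{i,j}'=-b_{i,j}$ for $i=k$ or $j=k$ and $b_{i,j}'=b_{i,j}+\tfrac{1}{2}(|b_{i,k}|b_{k,j}+b_{i,k}|b_{k,j}|)$ otherwise. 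In effect, Theorem~\ref{thm_invaraint_implies_admissible} collapses the ``for all sequences of orbit mutations'' clause in the definition of globally foldable to a single automatic verification per intermediate quiver.
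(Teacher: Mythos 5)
Your argument is correct and is exactly the route the paper takes: the paper records this corollary as a direct consequence of Theorem~\ref{thm_invaraint_implies_admissible}, leaving implicit precisely the bookkeeping you spell out (orbit mutations preserve $G$-invariance by equivariance of mutation under relabeling, mutation equivalence preserves ``of type~$\dynX$'', and induction on the length of the orbit-mutation sequence). No discrepancy to report.
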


\begin{remark}
Because the cluster algebras of affine type is finite mutation type (see 
Theorem~\ref{thm_finite_type_classification}(2)), using the computer program 
SageMath~\cite{sagemath}, one can get the same result as 
Theorem~\ref{thm_invaraint_implies_admissible} for quivers of type $\exdynE$ or type 
$\exdynA_{n,n},\exdynD_n$ for a given $n$.
More precisely, the command \verb!mutation_class! produces all quivers which 
are mutation equivalent to a given one. 
For more details, we refer the reader to~\cite[\S 4.4]{MusikerStump2011}. 
Using this command, one may provide an \emph{experimental proof} while we provide a \emph{combinatorial proof} by observing the 
combinatorics of quivers. 
\end{remark}

\section{Type-by-type arguments for admissibility}
\label{section:proof of admissibility}
In this section, we will prove Theorem~\ref{thm_invaraint_implies_admissible}. 
We say that a quiver $\quiver$ is of \emph{finite mutation type} (or is \emph{mutation-finite}) if there is only finitely many quivers mutation equivalent to $\quiver$. Otherwise, we say that $\quiver$ is of \emph{infinite mutation type} (or is \emph{mutation-infinite}).
As we have already seen in Theorem~\ref{thm_finite_type_classification}, a quiver of affine or finite Dynkin type is mutation-finite. 

Before providing a proof, we study some mutation-infinite quivers. 
We first recall the following lemma:
\begin{lemma}[{\cite[Lemma~6.4]{FeliksonShapiroTumarkin12_JEMS}}]
\label{lemma:induced subquiver}
Let $\quiver$ be a quiver on $[n]$ and let
$\quiver_0=\quiver|_I$ be the restriction onto a subset $I\subset[n]$. Then, 
for any quiver $\quiver_1$ mutation equivalent to $\quiver_0$, there exists 
$\quiver'$ mutation equivalent to $\quiver$ such that $\quiver'|_I = \quiver_1$.
\end{lemma}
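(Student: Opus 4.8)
The plan is to reduce the statement to a single \emph{local compatibility} between mutation and restriction, and then to induct on the length of a mutation sequence. Concretely, the key claim is: for any skew-symmetrizable matrix $\qbasis=(b_{i,j})$ on $[n]$, any subset $I\subseteq[n]$, and any $k\in I$,
\[
\bigl(\mutation_k(\qbasis)\bigr)\big|_I=\mutation_k\bigl(\qbasis|_I\bigr);
\]
equivalently, in quiver language, $\mutation_k(\quiver)|_I=\mutation_k(\quiver|_I)$ whenever $k\in I$. This is immediate from the exchange-matrix mutation formula: for $i,j\in I$, the entry $b'_{i,j}$ of $\mutation_k(\qbasis)$ equals $-b_{i,j}$ if $i=k$ or $j=k$, and otherwise equals $b_{i,j}+\frac{1}{2}\bigl(|b_{i,k}|b_{k,j}+b_{i,k}|b_{k,j}|\bigr)$; in either case it is determined by $b_{i,j}$, $b_{i,k}$, $b_{k,j}$, all of which are entries of the principal submatrix $\qbasis|_I$ because $i,j,k\in I$. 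Hence mutating at a vertex of $I$ and restricting to $I$ commute. (Note that the $2$-cycle cancellation is already built into the displayed formula, so there is no separate bookkeeping of cancelled arrows.)

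Granting this, the proof is short. Since $\quiver_1$ is mutation equivalent to $\quiver_0=\quiver|_I$ and $\quiver_0$ is a quiver whose vertex set is $I$, there is a sequence of indices $j_1,\dots,j_\ell\in I$ with $\quiver_1=(\mutation_{j_\ell}\cdots\mutation_{j_1})(\quiver_0)$. Define $\quiver'\coloneqq(\mutation_{j_\ell}\cdots\mutation_{j_1})(\quiver)$, which is legitimate because each $j_m\in[n]$; by construction $\quiver'$ is mutation equivalent to $\quiver$. Finally, an induction on $\ell$ — the base case $\ell=0$ being trivial, and the inductive step applying the compatibility claim at the vertex $j_\ell\in I$ to the quiver $(\mutation_{j_{\ell-1}}\cdots\mutation_{j_1})(\quiver)$ — yields
\[
\quiver'|_I=(\mutation_{j_\ell}\cdots\mutation_{j_1})(\quiver)\big|_I=(\mutation_{j_\ell}\cdots\mutation_{j_1})(\quiver|_I)=(\mutation_{j_\ell}\cdots\mutation_{j_1})(\quiver_0)=\quiver_1,
\]
which is exactly what is needed.

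I do not expect a genuine obstacle here; the proof is essentially bookkeeping. The one point that deserves care is the compatibility claim itself: one must check that the mutation rule at a vertex $k\in I$ never references matrix entries outside the $I\times I$ block (this would fail if one instead tried to mutate at a vertex \emph{outside} $I$ and still expect the restriction to behave well). The only other thing worth stating explicitly is the observation used at the start of the second paragraph — that a mutation sequence witnessing the passage from $\quiver_0$ to $\quiver_1$ automatically involves only vertices of $I$, since those are the only vertices of $\quiver_0$ — which is what lets us run the \emph{same} sequence on the larger quiver $\quiver$.
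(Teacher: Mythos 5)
Your proof is correct and is the standard argument: mutation at a vertex $k\in I$ only reads entries of the $I\times I$ block, so it commutes with restriction to $I$, and one then runs the same mutation sequence (which necessarily uses only vertices of $I$) on the ambient quiver. The paper itself does not prove this lemma but simply cites \cite[Lemma~6.4]{FeliksonShapiroTumarkin12_JEMS}, and your argument matches the proof given there.
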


We say that a quiver $\quiver$ is \emph{reduced to} $\quiver'$ if $\quiver'$ is 
obtained by applying a sequence of mutations on $\quiver$ and restrictions, 
denoted by 
\[
\quiver\reducedto \quiver'.
\]
Then the lemma below is the direct consequence of the definition and 
Lemma~\ref{lemma:induced subquiver}.
\begin{lemma}\label{lemma:mutation_finite_cannot_have_mutation_infinite_subquiver}
Let $\quiver$ and $\quiver'$ be quivers with $\quiver\reducedto \quiver'$.
Then, we obtain the following.
\begin{enumerate}
\item If $\quiver$ is mutation-finite, then so is $\quiver'$.
\item If $\quiver'$ is mutation-infinite, then so is $\quiver$.
\end{enumerate}
\end{lemma}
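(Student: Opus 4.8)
The plan is to first reduce to a single statement: part~(2) is the contrapositive of part~(1), so it suffices to prove that $\quiver \reducedto \quiver'$ together with $\quiver$ mutation-finite implies $\quiver'$ mutation-finite. Unwinding the definition of $\reducedto$, the hypothesis furnishes a chain $\quiver = \quiver_0, \quiver_1, \dots, \quiver_m = \quiver'$ in which each $\quiver_{j+1}$ is obtained from $\quiver_j$ by either a single mutation or a single restriction. I would induct on the length $m$ of this chain; the base case $m = 0$ is trivial, and for the inductive step it is enough to dispose of the two one-step situations: (a) $\quiver' = \mutation_k(\quiver)$ for some vertex $k$, and (b) $\quiver' = \quiver|_I$ for some vertex subset $I$.

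Case~(a) is immediate: mutation is an involution and mutation equivalence is an equivalence relation, so $\quiver$ and $\quiver'$ lie in the same mutation class, which is finite by hypothesis. Case~(b) is the one that uses Lemma~\ref{lemma:induced subquiver}. Assume for contradiction that $\quiver' = \quiver|_I$ is mutation-infinite, so that infinitely many pairwise distinct quivers $\quiver_1$ on the vertex set $I$ are mutation equivalent to $\quiver'$. For each of them, Lemma~\ref{lemma:induced subquiver} produces a quiver mutation equivalent to $\quiver$ whose restriction to $I$ is exactly $\quiver_1$. Since passing to the restriction on $I$ is a well-defined operation, these lifts are pairwise distinct, and we obtain infinitely many quivers mutation equivalent to $\quiver$ --- contradicting mutation-finiteness of $\quiver$. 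Hence $\quiver'$ is mutation-finite, which closes the induction and proves~(1); part~(2) then follows by contraposition.

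I do not expect a genuine obstacle here, in line with the remark in the text that this is essentially a direct consequence of the definitions together with Lemma~\ref{lemma:induced subquiver}; the only input with real content is that lemma itself (quoted from~\cite{FeliksonShapiroTumarkin12_JEMS}). The one point to keep an eye on is the bookkeeping around what ``finitely many quivers mutation equivalent to $\quiver$'' means --- reading it as finiteness of the mutation class on a fixed labelled vertex set (equivalently, up to isomorphism, since an isomorphism class on finitely many vertices is itself finite) --- so that ``infinitely many distinct $\quiver_1$'' really does transport to ``infinitely many distinct lifts'' in the mutation class of $\quiver$.
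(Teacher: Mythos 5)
Your argument is correct and is precisely the fleshed-out version of what the paper intends: the paper offers no explicit proof, merely asserting that the lemma is a direct consequence of the definition of $\reducedto$ and Lemma~\ref{lemma:induced subquiver}, and your induction on the chain of mutations/restrictions, with the lifting-and-injectivity argument via restriction in the one-step restriction case, is exactly that consequence spelled out. No gaps.
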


Obviously, any quiver with two vertices is mutation-finite and here are known mutation-infinite quivers of three vertices 
(cf.~\cite[Section~2.3]{ABBS08}):  
\begin{align*}
&\begin{tikzpicture}[baseline=-.5ex]
\draw[fill] (-1,0) node (A1) {} circle (2pt) (0,0) node (A2) {} circle (2pt) 
(1,0) node (A3) {} circle (2pt);
\draw (A1) -- (A2) node[midway, above] {$a$};
\draw (A2) -- (A3) node[midway, above] {$b$};
\end{tikzpicture}\quad ab\ge2,&
&\begin{tikzpicture}[baseline=-.5ex]
\draw[fill] (210:1) node (A1) {} circle (2pt) (90:1) node (A2) {} circle (2pt) 
(-30:1) node (A3) {} circle (2pt);
\draw[->] (A1) -- (A2) node[midway, left] {$a$};
\draw[->] (A2) -- (A3) node[midway, right] {$b$};
\draw[->] (A3) -- (A1) node[midway, below] {$c$};
\end{tikzpicture}\quad ab>2c \ge 2,&
&\begin{tikzpicture}[baseline=-.5ex]
\draw[fill] (210:1) node (A1) {} circle (2pt) (90:1) node (A2) {} circle (2pt) 
(-30:1) node (A3) {} circle (2pt);
\draw[->] (A1) -- (A2) node[midway, left] {$a$};
\draw[->] (A2) -- (A3) node[midway, right] {$b$};
\draw[->] (A1) -- (A3) node[midway, below] {$c$};
\end{tikzpicture}\quad abc\ge 2
\end{align*}
where $a,b,c$ represent the number of arrows in the shown direction. We call 
these quivers the \emph{linear quiver} of type~$(a,b)$, the \emph{cyclic 
triangle} of type~$(a,b,c)$ and the \emph{acyclic triangle} of type~$(a,b,c)$, 
respectively.

\begin{lemma}\label{lemma:complete 4-graph}
Let $\quiver$ be a quiver on $[4]=\{1,2,3,4\}$ such that every pair of distinct vertices is 
connected. 
Then $\quiver$ is mutation-infinite unless $\quiver$ is the following quiver:
\[
\quiver=
\begin{tikzpicture}[baseline=.5ex]
\draw[fill] (0,0) node (A1) {} circle (2pt) (210:1) node (A3) {} circle (2pt) 
(90:1) node (A2) {} circle (2pt) (-30:1) node (A4) {} circle (2pt);
\draw[-Implies,double distance=2pt] (A2) node[above] {$2$} -- (A1) node[above 
right] {$1$};
\draw[->] (A3) node[left] {$3$} -- (A2);
\draw[->] (A4) node[right] {$4$} -- (A2);
\draw[->] (A1) -- (A3);
\draw[->] (A3) -- (A4);
\draw[->] (A1) -- (A4);
\end{tikzpicture}
\]
\end{lemma}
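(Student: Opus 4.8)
The plan is to realize $\quiver$, after a sequence of mutations, as a quiver containing one of the mutation-infinite three-vertex quivers recalled just before the statement as an induced subquiver on three vertices, so that mutation-infiniteness of $\quiver$ follows from Lemma~\ref{lemma:mutation_finite_cannot_have_mutation_infinite_subquiver}. The first step is purely local: since every triple of vertices of $\quiver$ spans a \emph{complete} triangle, if $\quiver$ is mutation-finite then so are all four of its induced triangles, and a short inspection of the displayed criteria — combined with the easily checked facts that the Markov quiver and the cyclic triangle of type $(2,1,1)$ mutate only to copies of themselves — shows that a complete triangle is mutation-finite if and only if, up to relabeling and reversal of all arrows, it is an acyclic or cyclic triangle of type $(1,1,1)$, a cyclic triangle of type $(2,1,1)$, or the Markov quiver. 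In particular, a mutation-finite $\quiver$ has all edges of multiplicity at most $2$, and any triangle of $\quiver$ carrying a double edge is either the Markov quiver or a cyclic triangle of type $(2,1,1)$.

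Next I would dispose of Markov sub-triangles. If, say, $\quiver|_{\{1,2,3\}}$ is the Markov quiver, then the edges $\{1,2\},\{1,3\},\{2,3\}$ are all doubled, and each of the triangles $\{1,2,4\},\{1,3,4\},\{2,3,4\}$ contains a double edge; running through the only two admissible shapes for those triangles (cyclic $(2,1,1)$, or — if the relevant edge to the fourth vertex is also double — Markov) quickly produces a triangle that is acyclic and carries a double edge, hence is mutation-infinite, a contradiction. So we may assume no induced triangle of $\quiver$ is Markov. Suppose now $\quiver$ has a double edge. It has only one: if two double edges were present, the triangle they span together with a fourth vertex would, by the same kind of case check, force an acyclic triangle carrying a double edge. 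With exactly one double edge, relabel the vertices so that it reads $2\Rightarrow 1$. Then the triangles $\{1,2,3\}$ and $\{1,2,4\}$ are cyclic of type $(2,1,1)$, which forces the arrows $1\to 3\to 2$ and $1\to 4\to 2$; the triangle $\{1,3,4\}$ then has $1$ as a source, so it must be the acyclic triangle of type $(1,1,1)$, i.e. $\{3,4\}$ is a single edge, and since $3$ and $4$ are interchangeable its two possible orientations give isomorphic quivers. In either case $\quiver$ is isomorphic to the quiver displayed in the statement (which is itself mutation-finite — all of its mutations are again isomorphic to it — so the exception is genuine, although the lemma only asks for the stated direction).

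It remains to handle the case where $\quiver$ is an orientation of $K_4$. Up to isomorphism there are exactly four such orientations. For each of them I would pick a vertex $k$ — concretely, one with two outgoing edges, or dually two incoming edges — and verify that $\mutation_k(\quiver)$ acquires a double edge and that, restricted to a suitable triple of its vertices, it becomes a linear quiver of type $(2,1)$ or an acyclic triangle of type $(2,2,1)$; Lemma~\ref{lemma:mutation_finite_cannot_have_mutation_infinite_subquiver} then shows $\quiver$ is mutation-infinite, completing the proof.

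The conceptual content lies entirely in the triangle classification of the first paragraph; the rest is bookkeeping. I expect the two places needing genuine care to be: (i) the case of a Markov sub-triangle, and more generally of two or more double edges, where one must check that the forced configuration really does contain an acyclic triangle carrying a double edge; and (ii) the orientation-of-$K_4$ case, where the mutation and the ensuing restriction must be chosen so as to land on one of the three \emph{explicitly listed} mutation-infinite three-vertex quivers — in particular one must avoid the cyclic triangle of type $(2,1,1)$, which is mutation-finite and would yield no information.
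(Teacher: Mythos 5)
Your proposal is correct and follows essentially the same route as the paper: both arguments bound the edge multiplicities by restricting to the listed three-vertex mutation-infinite quivers (the paper does this via the observation that in any orientation of $K_4$ every edge except possibly one lies in an acyclic sub-triangle, whereas you classify the mutation-finite complete triangles directly, which is why you must also dispose of the Markov quiver by hand), and both finish by explicitly mutating each of the four all-single-edge orientations of $K_4$ down to a mutation-infinite restriction. One small inaccuracy that does not affect the argument: the cyclic triangle of type $(2,1,1)$ does \emph{not} mutate only to copies of itself --- mutating at the vertex off the double edge returns the acyclic triangle of type $(1,1,1)$ --- but it is indeed mutation-finite, being of type $\exdynA_{1,2}$, which is all your classification actually needs.
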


\begin{proof} 
According to the number of \emph{sources}---vertices only with outward edges--- and \emph{sinks}---vertices only with inward edges--- in $\quiver$, there are only four 
quivers up to isomorphisms as depicted in Figure~\ref{figure:complete 4-graph}. 
Then it is easy to check that a quiver with one sink is mutation equivalent to 
a quiver with one source.

\begin{figure}[ht]
\begin{subfigure}[t]{0.28\textwidth}
\centering
\begin{tikzpicture}[baseline=.5ex]
\draw[fill] (0,0) node (A1) {} circle (2pt) (210:1) node (A3) {} circle (2pt) 
(90:1) node (A2) {} circle (2pt) (-30:1) node (A4) {} circle (2pt);
\draw[->] (A2) node[above] {$2$} -- (A1) node[above right] {$1$};
\draw[->] (A3) node[left] {$3$} -- (A2);
\draw[->] (A4) node[right] {$4$} -- (A2);
\draw[->] (A1) -- (A3);
\draw[->] (A3) -- (A4);
\draw[->] (A1) -- (A4);
\end{tikzpicture}
\caption{A quiver without source or sink}
\end{subfigure}
\begin{subfigure}[t]{0.4\textwidth}
\centering
\begin{tikzpicture}[baseline=.5ex]
\draw[fill] (0,0) node (A1) {} circle (2pt) (210:1) node (A3) {} circle (2pt) 
(90:1) node (A2) {} circle (2pt) (-30:1) node (A4) {} circle (2pt);
\draw[->] (A1) node[above right] {$1$} -- (A2) node[above] {$2$};
\draw[->] (A3) node[left] {$3$} -- (A1);
\draw[->] (A4) node[right] {$4$} -- (A1);
\draw[->] (A2) -- (A3);
\draw[->] (A4) -- (A3);
\draw[->] (A4) -- (A2);
\end{tikzpicture}$\!\!\stackrel{\mutation_4}{\longleftrightarrow}\!\!$
\begin{tikzpicture}[baseline=.5ex]
\draw[fill] (0,0) node (A1) {} circle (2pt) (210:1) node (A3) {} circle (2pt) 
(90:1) node (A2) {} circle (2pt) (-30:1) node (A4) {} circle (2pt);
\draw[->] (A1) node[above right] {$1$} -- (A2) node[above] {$2$};
\draw[->] (A3) node[left] {$3$} -- (A1);
\draw[->] (A1) -- (A4) node[right] {$4$};
\draw[->] (A2) -- (A3);
\draw[->] (A3) -- (A4);
\draw[->] (A2) -- (A4);
\end{tikzpicture}
\caption{Quivers with source or sink}
\end{subfigure}
\begin{subfigure}[t]{0.28\textwidth}
\centering
\begin{tikzpicture}[baseline=.5ex]
\draw[fill] (0,0) node (A1) {} circle (2pt) (210:1) node (A3) {} circle (2pt) 
(90:1) node (A2) {} circle (2pt) (-30:1) node (A4) {} circle (2pt);
\draw[->] (A1) node[above right] {$1$} -- (A2) node[above] {$2$};
\draw[->] (A3) node[left] {$3$} -- (A1);
\draw[->] (A4) node[right] {$4$} -- (A1);
\draw[->] (A3) -- (A2);
\draw[->] (A4) -- (A3);
\draw[->] (A4) -- (A2);
\end{tikzpicture}
\caption{A quiver with both source and sink}
\end{subfigure}
\caption{Quivers of the complete 4-graph}
\label{figure:complete 4-graph}
\end{figure}
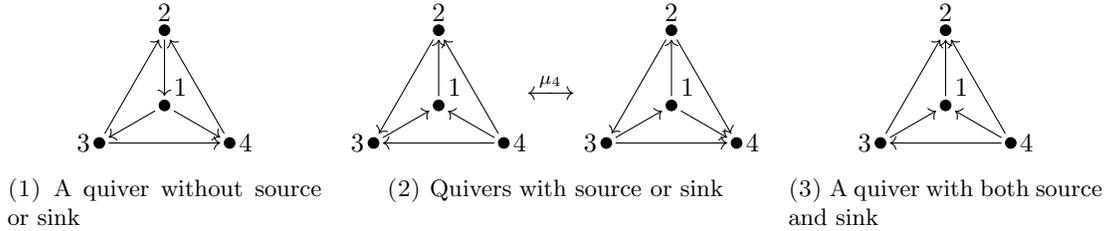

Suppose that $\quiver$ is mutation-finite. Then it can not be reduced to the 
acyclic triangle of type~$(a,b,c)$ with $abc\ge 2$ and therefore any acyclic 
triangle is of type~$(1,1,1)$.

Since every edge of the quiver $\quiver$ with a sink or a source is a part of 
an 
acyclic triangle, all edges of $\quiver$ is \emph{simple}, that is, it consists 
of a 
single arrow.  
For a quiver without sink and source, the only edge from $2$ to $1$ may have 
multiple edges.

Since a cyclic triangle of type~$(1,1,b)$ with $b\ge 3$ is mutation-infinite, 
$|b_{1,2}|\le 2$. Hence either,
\[
\begin{tikzpicture}[baseline=.5ex]
\draw[fill] (0,0) node (A1) {} circle (2pt) (210:1) node (A3) {} circle (2pt) 
(90:1) node (A2) {} circle (2pt) (-30:1) node (A4) {} circle (2pt);
\draw[->] (A2) node[above] {$2$} -- (A1) node[above right] {$1$};
\draw[->] (A3) node[left] {$3$} -- (A2);
\draw[->] (A4) node[right] {$4$} -- (A2);
\draw[->] (A1) -- (A3);
\draw[->] (A3) -- (A4);
\draw[->] (A1) -- (A4);
\end{tikzpicture}\quad\text{ or }\quad
\begin{tikzpicture}[baseline=.5ex]
\draw[fill] (0,0) node (A1) {} circle (2pt) (210:1) node (A3) {} circle (2pt) 
(90:1) node (A2) {} circle (2pt) (-30:1) node (A4) {} circle (2pt);
\draw[-Implies,double distance=2pt] (A2) node[above] {$2$} -- (A1) node[above 
right] {$1$};
\draw[->] (A3) node[left] {$3$} -- (A2);
\draw[->] (A4) node[right] {$4$} -- (A2);
\draw[->] (A1) -- (A3);
\draw[->] (A3) -- (A4);
\draw[->] (A1) -- (A4);
\end{tikzpicture}
\]
but we exclude the latter by assumption.

Then the following can be checked directly:
\begin{align*}
&\begin{tikzpicture}[baseline=.5ex]
\begin{scope}
\draw[fill] (0,0) node (A1) {} circle (2pt) (210:1) node (A3) {} circle (2pt) 
(90:1) node (A2) {} circle (2pt) (-30:1) node (A4) {} circle (2pt);
\draw[->] (A2) node[above] {$2$} -- (A1) node[above right] {$1$};
\draw[->] (A3) node[left] {$3$} -- (A2);
\draw[->] (A4) node[right] {$4$} -- (A2);
\draw[->] (A1) -- (A3);
\draw[->] (A3) -- (A4);
\draw[->] (A1) -- (A4);
\end{scope}
\begin{scope}[xshift=4cm]
\draw[fill] (0,0) node (A1) {} circle (2pt) (210:1) node (A3) {} circle (2pt) 
(90:1) node (A2) {} circle (2pt) (-30:1) node (A4) {} circle (2pt);
\draw[->] (A1) -- (A3) node[left] {$3$};
\draw[->] (A2) -- (A3);
\draw[-Implies,double distance=2pt] (A4) -- (A1) node[below] {$1$};
\draw[->] (A3) -- (A4) node[right] {$4$};
\draw[<-] (A4) -- (A2) node[above] {$2$};
\draw[-Implies,double distance=2pt] (A1)--(A2);
\end{scope}
\begin{scope}[xshift=8cm]
\draw[fill] (0,0) node (A1) {} circle (2pt) (210:1) node (A3) {} circle (2pt) 
(90:1) 
node (A2) {} circle (2pt) (-30:1) node (A4) {} circle (2pt);
\draw[-Implies,double distance=2pt] (A4) -- (A1) node[below] {$1$};
\draw[<-] (A4) node[right] {$4$} -- (A2) node[above] {$2$};
\draw[-Implies,double distance=2pt] (A1)--(A2);
\draw[->, lightgray] (A2) -- (A3);
\draw[->, lightgray] (A1) -- (A3) node[left] {$3$};
\draw[->, lightgray] (A3) -- (A4) ;
\draw[fill, lightgray] (A3) {} circle (2pt);
\end{scope}
\draw[->](1.5,0)--(2.5,0) node[midway,above] {$\mutation_4\mutation_3$};
\draw (6,0) node {$\succ$};
\end{tikzpicture}\\
&\begin{tikzpicture}[baseline=.5ex]
\begin{scope}
\draw[fill] (0,0) node (A1) {} circle (2pt) (210:1) node (A3) {} circle (2pt) 
(90:1) node (A2) {} circle (2pt) (-30:1) node (A4) {} circle (2pt);
\draw[->] (A1) node[above right] {$1$} -- (A2) node[above] {$2$};
\draw[->] (A3) node[left] {$3$} -- (A1);
\draw[->] (A4) node[right] {$4$} -- (A1);
\draw[->] (A2) -- (A3);
\draw[->] (A4) -- (A3);
\draw[->] (A4) -- (A2);
\end{scope}
\begin{scope}[xshift=4cm]
\draw[fill] (0,0) node (A1) {} circle (2pt) (210:1) circle (2pt) node (A3) {} 
(90:1) node (A2) {} circle (2pt) (-30:1) node (A4) {} circle (2pt);
\draw[->] (A2) -- (A1);
\draw[->] (A3) node[left] {$3$} -- (A1) node[above right] {$1$};
\draw[->] (A1) -- (A4);
\draw[-Implies,double distance=2pt] (A2) -- (A3);
\triplearrow{arrows={-Implies}}{(A3) -- (A4)};
\draw[-Implies,double distance=2pt] (A4) node[right] {$4$} -- (A2) node[above] 
{$2$};
\end{scope}
\begin{scope}[xshift=8cm]
\draw[fill] (210:1) node (A3) {} circle (2pt) (90:1) node (A2) {} circle (2pt) 
(-30:1) node (A4) {} circle (2pt);
\draw[-Implies,double distance=2pt] (A2) -- (A3) node[left] {$3$};
\triplearrow{arrows={-Implies}}{(A3) -- (A4)};
\draw[-Implies,double distance=2pt] (A4) node[right] {$4$} -- (A2) node[above] 
{$2$};
\draw[fill,lightgray] (0,0) node (A1) {} circle (2pt);
\draw[->,lightgray] (A2) -- (A1) node[above right] {$1$};
\draw[->,lightgray] (A3) -- (A1);
\draw[->,lightgray] (A1) -- (A4);
\end{scope}
\draw[->](1.5,0)--(2.5,0) node[midway,above] {$\mutation_2\mutation_4\mutation_3\mutation_1$};
\draw (6,0) node {$\succ$};
\end{tikzpicture}\\
&\begin{tikzpicture}[baseline=.5ex]
\begin{scope}
\draw[fill] (0,0) node (A1) {} circle (2pt) (210:1) node (A3) {} circle (2pt) 
(90:1) node (A2) {} circle (2pt) (-30:1) node (A4) {} circle (2pt);
\draw[->] (A1) node[above right] {$1$} -- (A2) node[above] {$2$};
\draw[->] (A3) node[left] {$3$} -- (A1);
\draw[->] (A1) -- (A4) node[right] {$4$};
\draw[->] (A2) -- (A3);
\draw[->] (A3) -- (A4);
\draw[->] (A2) -- (A4);
\end{scope}
\begin{scope}[xshift=4cm]
\draw[fill] (0,0) node (A1) {} circle (2pt) (210:1) node (A3) {} circle (2pt) 
(90:1) node (A2) {} circle (2pt) (-30:1) node (A4) {} circle (2pt);
\draw[->] (A1) -- (A2) node[above] {$2$};
\draw[->] (A1) -- (A3) node[left] {$3$};
\draw[->] (A4) -- (A1) node[above right] {$1$};
\draw[-Implies,double distance=2pt] (A2) -- (A3);
\draw[-Implies,double distance=2pt] (A3) -- (A4) node[right] {$4$};
\triplearrow{arrows={-Implies}}{(A4) -- (A2)};
\end{scope}
\begin{scope}[xshift=8cm]
\draw[fill] (210:1) node (A3) {} circle (2pt) (90:1) node (A2) {} circle (2pt) 
(-30:1) node (A4) {} circle (2pt);
\draw[-Implies,double distance=2pt] (A2) node[above] {$2$} -- (A3) node[left] 
{$3$};
\draw[-Implies,double distance=2pt] (A3) -- (A4) node[right] {$4$};
\triplearrow{arrows={-Implies}}{(A4) -- (A2)};
\draw[fill,lightgray] (0,0) node (A1) {} circle (2pt);
\draw[->,lightgray] (A1) node[above right] {$1$} -- (A2);
\draw[->,lightgray] (A1) -- (A3);
\draw[->,lightgray] (A4) -- (A1);
\end{scope}
\draw[->](1.5,0)--(2.5,0) node[midway,above] {$\mutation_3\mutation_4\mutation_2\mutation_1$};
\draw (6,0) node {$\succ$};
\end{tikzpicture}\\
&\begin{tikzpicture}[baseline=.5ex]
\begin{scope}
\draw[fill] (0,0) node (A1) {} circle (2pt) (210:1) node (A3) {} circle (2pt) 
(90:1) node (A2) {} circle (2pt) (-30:1) node (A4) {} circle (2pt);
\draw[->] (A1) node[above right] {$1$} -- (A2) node[above] {$2$};
\draw[->] (A3) node[left] {$3$} -- (A1);
\draw[->] (A4) node[right] {$4$} -- (A1);
\draw[->] (A3) -- (A2);
\draw[->] (A4) -- (A3);
\draw[->] (A4) -- (A2);
\end{scope}
\begin{scope}[xshift=4cm]
\draw[fill] (0,0) node (A1) {} circle (2pt) (210:1) node (A3) {} circle (2pt) 
(90:1) node (A2) {} circle (2pt) (-30:1) node (A4) {} circle (2pt);
\draw[->] (A2) -- (A1) node[above right] {$1$};
\draw[->] (A1) -- (A3) node[left] {$3$};
\draw[->] (A1) -- (A4) node[right] {$4$};
\draw[-Implies,double distance=2pt] (A3) -- (A2) node[above] {$2$};
\draw[->] (A4) -- (A3);
\draw[-Implies,double distance=2pt] (A4) -- (A2);
\end{scope}
\begin{scope}[xshift=8cm]
\draw[fill] (210:1) node (A3) {} circle (2pt) (90:1) node (A2) {} circle (2pt) 
(-30:1) node (A4) {} circle (2pt);
\draw[-Implies,double distance=2pt] (A3) node[left] {$3$} -- (A2) node[above] 
{$2$};
\draw[->] (A4) -- (A3);
\draw[-Implies,double distance=2pt] (A4) node[right] {$4$} -- (A2);
\draw[fill,lightgray] (0,0) node (A1) {} circle (2pt);
\draw[->,lightgray] (A2) -- (A1) node[above right] {$1$};
\draw[->,lightgray] (A1) -- (A3);
\draw[->,lightgray] (A1) -- (A4);
\end{scope}
\draw[->](1.5,0)--(2.5,0) node[midway,above] {$\mutation_1$};
\draw (6,0) node {$\succ$};
\end{tikzpicture}
\end{align*}
\end{proof}

\begin{remark}
Indeed, the quiver that we exclude in the assumption of the lemma above is 
\emph{block-decomposable} in the sense of \cite{FST08, 
FeliksonShapiroTumarkin12_JEMS} and so mutation-finite.
\end{remark}

\begin{corollary}\label{corollary:complete 4-graph}
Let $\quiver$ be a quiver on $[4]$ such that every pair of distinct vertices is 
connected.  
If $\quiver$ contains a cyclic triangle of type different from~$(1,1,2)$, then 
it is 
mutation-infinite.
\end{corollary}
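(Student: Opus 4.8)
The plan is to obtain the corollary as an immediate consequence of Lemma~\ref{lemma:complete 4-graph}. That lemma shows that among all quivers on $[4]$ in which every pair of vertices is connected, the only mutation-finite one is the single exceptional quiver $\quiver_0$ exhibited there; up to isomorphism its edge set is
\[
\{\, 2\Rightarrow 1,\ 1\to 3,\ 1\to 4,\ 3\to 2,\ 4\to 2,\ 3\to 4\,\},
\]
where $2\Rightarrow 1$ denotes the unique double arrow. Hence, to prove the corollary it is enough to inspect the four induced triangular subquivers of $\quiver_0$ and verify that the cyclic ones all have type $(1,1,2)$.

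Carrying out the inspection: on $\{1,3,4\}$ the edges are $1\to 3$, $1\to 4$, $3\to 4$, so $1$ is a source and $4$ a sink, giving an \emph{acyclic} triangle of type $(1,1,1)$; likewise on $\{2,3,4\}$ the edges are $3\to 2$, $3\to 4$, $4\to 2$, again an acyclic triangle of type $(1,1,1)$. The two remaining subquivers are cyclic: on $\{1,2,3\}$ one gets $1\to 3\to 2\Rightarrow 1$, and on $\{1,2,4\}$ one gets $1\to 4\to 2\Rightarrow 1$, each a cyclic triangle of type $(1,1,2)$. Therefore a complete $4$-graph quiver $\quiver$ containing a cyclic triangle whose type differs from $(1,1,2)$ cannot be isomorphic to $\quiver_0$, and Lemma~\ref{lemma:complete 4-graph} then forces $\quiver$ to be mutation-infinite.

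There is no serious obstacle here; the argument is a finite check. The one point worth stressing is why the weaker tool Lemma~\ref{lemma:mutation_finite_cannot_have_mutation_infinite_subquiver} does not suffice on its own: a cyclic triangle of type $(1,1,1)$ is itself mutation-finite (it is mutation equivalent to a path of type $\dynA_3$), so from the presence of such a triangle alone one cannot conclude that $\quiver$ is mutation-infinite; one genuinely needs the full classification of mutation-finite complete $4$-graphs provided by Lemma~\ref{lemma:complete 4-graph}. A minor sanity check is that reversing all arrows of $\quiver_0$ produces a quiver isomorphic to $\quiver_0$, so the two orientations of the double arrow do not create a missed case.
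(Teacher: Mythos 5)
Your proof is correct and matches the paper's intent: the paper states this as an immediate consequence of Lemma~\ref{lemma:complete 4-graph} without further argument, and your finite check that the unique mutation-finite complete $4$-graph quiver has only cyclic triangles of type $(1,1,2)$ (plus the observation about arrow reversal giving an isomorphic quiver) is exactly the verification needed.
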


\begin{corollary}\label{corollary:seed restriction}
Let $\quiver$ be a quiver on $[n]$ of standard affine type. Then $\quiver$ can not be 
reduced to any of the following: 
\begin{enumerate}
\item mutation-infinite quivers;
\item $\quiver(2\exdynA_1)$,
\end{enumerate}
where
\[
\quiver(2\exdynA_1) = \quiver(\exdynA_1)\coprod \quiver(\exdynA_1)=
\begin{tikzpicture}[baseline=-0.5ex]
\draw[fill] (-0.5,0.5) node (A2) {} circle (2pt) (0.5,-0.5) node (A4) {} circle 
(2pt) (-0.5,-0.5) node (A6) {} circle (2pt) (0.5,0.5) node (A7) {} circle (2pt);
\draw[-Implies,double distance=2pt] (A2) -- (A6);
\draw[-Implies,double distance=2pt] (A4) -- (A7);
\end{tikzpicture}
\]
\end{corollary}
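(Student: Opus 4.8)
The plan is to treat the two items separately; Item~(1) is immediate and Item~(2) is the substance.

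\smallskip\noindent\textbf{Item (1).} Since $\quiver$ is of standard affine type it is mutation-finite by Theorem~\ref{thm_finite_type_classification}(2). Hence, by Lemma~\ref{lemma:mutation_finite_cannot_have_mutation_infinite_subquiver}(1), every quiver $\quiver'$ with $\quiver\reducedto\quiver'$ is mutation-finite, so $\quiver$ is not reduced to any mutation-infinite quiver.

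\smallskip\noindent\textbf{Item (2).} First I would rephrase what a reduction $\quiver\reducedto\quiver(2\exdynA_1)$ produces. Applying Lemma~\ref{lemma:induced subquiver} repeatedly --- equivalently, using that mutation at a vertex lying in the restricted set commutes with restriction --- one obtains a quiver $\tilde\quiver$ mutation equivalent to $\quiver$ together with a four-element subset $I$ of its vertices such that $\tilde\quiver|_I=\quiver(2\exdynA_1)$. So it suffices to prove that no quiver mutation equivalent to a quiver of standard affine type admits $\quiver(2\exdynA_1)$ as an induced subquiver.

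The tool I would use is quasi-Cartan companions. Because $\quiver$ is a genuine quiver its exchange matrix is skew-symmetric, so its type is one of the simply-laced affine types $\exdynA_{n-1},\exdynD_{n-1},\exdynE_6,\exdynE_7,\exdynE_8$; hence an acyclic representative $\quiver_{t_0}$ of the mutation class has Cartan counterpart equal to the corresponding affine Cartan matrix, which is positive semidefinite of corank exactly $1$, and which is a quasi-Cartan companion of $\qbasis(\quiver_{t_0})$. The key input is the known fact --- there is a mutation rule on pairs $(\qbasis,A)$ consisting of an exchange matrix together with an admissible quasi-Cartan companion, fixing the congruence class of $A$ --- that the existence of a positive semidefinite quasi-Cartan companion of a given corank is preserved under mutation. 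Thus $\tilde\quiver$ admits a quasi-Cartan companion $A$ that is positive semidefinite of corank $1$. Now the principal submatrix $A|_I$ is again positive semidefinite, and a standard argument (if $(A|_I)v=0$ then the zero-extension $\hat v$ satisfies $\hat v^{\mathsf T}A\hat v=v^{\mathsf T}(A|_I)v=0$, hence $\hat v\in\ker A$) shows $\operatorname{corank}(A|_I)\le\operatorname{corank}(A)=1$. On the other hand $A|_I$ is a quasi-Cartan companion of $\qbasis(\tilde\quiver)|_I=\qbasis(\quiver(2\exdynA_1))$, and since the two $\exdynA_1$-components of $\quiver(2\exdynA_1)$ are joined by no arrow, \emph{every} quasi-Cartan companion of $\qbasis(\quiver(2\exdynA_1))$ is block diagonal with two blocks of the form $\bigl(\begin{smallmatrix}2&\pm2\\\pm2&2\end{smallmatrix}\bigr)$, each of corank $1$, hence has corank exactly $2$. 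The inequality $2\le1$ is the desired contradiction.

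The step I expect to be the main obstacle is precisely this cited input: the mutation-invariance of positive semidefinite quasi-Cartan companions together with the control of the corank; everything else is formal. If one prefers to stay within the combinatorial framework of the paper, the same conclusion can be reached type by type: for $n=4$ it is immediate, since $\tilde\quiver=\quiver(2\exdynA_1)$ is disconnected while mutation preserves connectedness and affine Dynkin diagrams are connected; for the families $\exdynA_{n-1}$ and $\exdynD_{n-1}$ one may argue with the surface models and block decompositions of~\cite{FST08,FeliksonShapiroTumarkin12_JEMS} (a triangulated annulus, respectively twice-punctured disk, has no induced subquiver isomorphic to a disjoint union of two copies of $\quiver(\exdynA_1)$); and the finitely many remaining cases $\exdynE_6,\exdynE_7,\exdynE_8$ are a finite verification, e.g. with SageMath~\cite{sagemath}.
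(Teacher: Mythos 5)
Your Item~(1) is exactly the paper's argument. For Item~(2) your proof is correct but takes a genuinely different route. The paper, after using Lemma~\ref{lemma:induced subquiver} to realize the reduction as a restriction $\quiver'|_I=\quiver(2\exdynA_1)$ of a single quiver $\quiver'$ mutation equivalent to $\quiver$ (just as you do), invokes Lemma~\ref{lemma_on_facets} (Reading--Stella) to see that the seeds reachable from the initial seed by mutations avoiding any one fixed vertex $i$ form a finite set; choosing $i\in I$ leaves an entire Kronecker component of $\quiver(2\exdynA_1)$ fully mutable, which generates infinitely many seeds and contradicts Theorem~\ref{thm_finite_type_classification}(1). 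You instead use a linear-algebraic invariant, the corank of a semipositive quasi-Cartan companion. Your two formal steps are sound: every quasi-Cartan companion of $\qbasis(\quiver(2\exdynA_1))$ is block diagonal with two singular $2\times2$ blocks and hence has corank exactly $2$, and the corank of a principal submatrix of a positive semidefinite matrix is bounded by the corank of the whole matrix. So the entire weight rests on the input you flag yourself: that every quiver in the mutation class of an acyclic simply-laced affine quiver admits an admissible positive semidefinite quasi-Cartan companion of corank $1$. This is a genuine theorem (Seven, \emph{Cluster algebras and semipositive symmetrizable matrices}, building on Barot--Geiss--Zelevinsky), but it lies outside the paper's toolkit and must be cited precisely: for a general quiver the mutated companion need not remain a quasi-Cartan companion of the mutated exchange matrix, so ``semipositivity of some companion is mutation-invariant'' is only true with the admissibility bookkeeping that Seven carries out for the affine mutation class. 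Granting that citation, your proof is complete and somewhat more robust than the paper's --- it rules out reduction to \emph{any} quiver all of whose companions have corank at least $2$ --- whereas the paper's proof stays entirely within results it already quotes. Your fallback type-by-type sketch (connectedness for $n=4$, surface models for $\exdynA_{p,q}$ and $\exdynD_n$, a finite check for $\exdynE$) would also work but is the least detailed part of the proposal.
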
 
\begin{proof}
\noindent (1) This is obvious since any standard affine type quiver is 
mutation-finite by Theorem~\ref{thm_finite_type_classification}(2) and by 
Lemma~\ref{lemma:mutation_finite_cannot_have_mutation_infinite_subquiver}. 

\noindent (2) Assume on the contrary that $\quiver\succ \quiver(2\exdynA_1)$. Since 
$2\exdynA_1$ is not of standard affine type, it must be a proper restriction of 
a quiver $\quiver'$, which is mutation equivalent to $\quiver$. We denote by $I 
\subsetneq [n]$ the subset satisfying $\quiver'|_{I} = \quiver(2\exdynA_1)$.
Consider a cluster pattern $\{\seed_t\}_{t \in \mathbb{T}_n}$ with the initial 
seed $\Sigma' = (\bfx', \quiver')$. Since the quiver $\quiver'$ is of standard 
affine type, by Lemma~\ref{lemma_on_facets}, the number of seeds in the 
cluster pattern $\{\seed_t\}_{t \in \mathbb{T}_n}$  obtained from the initial seed 
$\Sigma'$ by applying mutations on vertices $[n] \setminus \{i\}$  is finite 
for any $i \in [n]$. This implies that the number of seeds obtained from the 
restriction $\Sigma'|_{I}$ by mutations on vertices $I \setminus \{i\}$ is 
finite as well. This is impossible because of 
Theorem~\ref{thm_finite_type_classification}(1) and therefore $\quiver$ can 
not be reduced to $\quiver(2\exdynA_1)$ as claimed. 
\end{proof}

By using these lemmas, we will prove 
Theorem~\ref{thm_invaraint_implies_admissible}. We note that for any triple 
$(\dynX, G, \dynY)$ in Table~\ref{table:all possible foldings}, we have
\[
\dynX = \exdynA_{n,n}, \quad\exdynD_n,\quad \text{ or }\quad\exdynE.
\]
We will prove the theorem using type-by-type arguments. 

Before presenting the proof, we explain our strategy.
In order to show the $G$-admissibility 
of a $G$-invariant quiver $\quiver$, it is enough to show the conditions \ref{def_admissible_3} and 
\ref{def_admissible_4} in 
Definition~\ref{definition:admissible quiver}(2) since $\quiver$ is $G$-invariant by the assumption.
Moreover, if any element $g \in G$ is of order $2$, the 
condition~\ref{def_admissible_3} holds:
\begin{lemma}\label{lemma:condition_3_holds_of_order_2}
Let $(\dynX, G, \dynY)$ be a triple given by a column of 
Table~\ref{table:all possible foldings}. 
Let $\quiver$ be a quiver on $[n]$ of type~$\dynX$. 
Suppose that every element $g \in G$ is of order $2$. 
Then, for each $i\in[n]$ and $g\in G$, we obtain
\[
b_{i,g(i)}=0.
\]
\end{lemma}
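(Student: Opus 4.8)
The plan is to observe that this statement is a purely formal consequence of $G$-invariance together with the skew-symmetry of the adjacency matrix of a quiver; the affine type of $\dynX$ and the specific data of Table~\ref{table:all possible foldings} enter only through the hypothesis that every $g \in G$ satisfies $g^2 = \mathrm{id}$ (which holds precisely because the relevant $G$ is $\Z/2\Z$ or $(\Z/2\Z)^2$). In particular, no type-by-type analysis is needed here.

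Concretely, I would fix $i \in [n]$ and $g \in G$, and first record that $g(g(i)) = i$ since $g$ has order dividing $2$. Since $\quiver$ is $G$-invariant, by definition $b_{i,j} = b_{g(i),g(j)}$ for all $i,j$; applying this with $j = g(i)$ gives
\[
b_{i,g(i)} = b_{g(i),\,g(g(i))} = b_{g(i),i}.
\]
On the other hand, $\qbasis(\quiver)$ is the adjacency matrix of a quiver, hence skew-symmetric, so $b_{g(i),i} = -b_{i,g(i)}$. Combining the two relations yields $b_{i,g(i)} = -b_{i,g(i)}$, whence $b_{i,g(i)} = 0$, as claimed.

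There is essentially no obstacle in this argument. The only point worth flagging is that the computation is uniform in $g$: it simultaneously handles $g = \mathrm{id}$ (where $g(i) = i$ and the conclusion reduces to the absence of loops in a quiver) and the order-$2$ elements, and at no stage does one need to determine which pairs of vertices actually lie in a common $G$-orbit. This lemma then lets us discard condition~\ref{def_admissible_3} in Definition~\ref{definition:admissible quiver}(2) for all triples in Table~\ref{table:all possible foldings} whose group consists of involutions, reducing the proof of Theorem~\ref{thm_invaraint_implies_admissible} in those cases to verifying condition~\ref{def_admissible_4}.
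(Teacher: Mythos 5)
Your proof is correct and is essentially identical to the paper's one-line argument: $b_{i,g(i)} = b_{g(i),g^2(i)} = b_{g(i),i} = -b_{i,g(i)}$, using $G$-invariance, $g^2 = \mathrm{id}$, and skew-symmetry of $\qbasis(\quiver)$. No meaningful difference in approach.
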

\begin{proof}
Since every $g\in G$ is of order 2, we have
\[
b_{i,g(i)} = b_{g(i), g^2(i)} = b_{g(i), i} = -b_{i,g(i)}.
\]
Therefore $b_{i,g(i)} = 0$.
\end{proof}

\subsection{\texorpdfstring{Admissibility of quivers of 
type~$\exdynA_{n,n}$}{Admissibility of quivers of affine An,n}}

\subsubsection{\texorpdfstring{$(\dynX, G, \dynY) = (\exdynA_{2,2}, \Z/2\Z, \exdynA_1)$}{(X, G, Y)=(affine A22, Z/2Z, affine A1)}
}
Let $\quiver$ be the quiver of type~$\exdynA_{2,2}$:
\[
\quiver = 
\begin{tikzpicture}[baseline=-.5ex]
\foreach \x in {1,...,4}{
\draw[fill] (90*\x:1) node (A\x) {} circle (2pt);
}

\draw (A1) node[above] {$1$} 
(A2) node[left] {$2$}
(A3) node[below] {$4$}
(A4) node[right] {$3$};

\draw[->] (A1) to (A2);
\draw[->] (A2) to (A3);
\draw[->] (A4) to (A3);
\draw[->] (A1) to (A4);


\end{tikzpicture}
\]
There are four quivers mutation equivalent to $\quiver$ up to isomorphisms. We present all of them:

\hfill
\begin{tikzpicture}[baseline=-.5ex]
\foreach \x in {1,...,4}{
\draw[fill] (90*\x:1) node (A\x) {} circle (2pt);
}

\draw (A1) node[above] {1} 
(A2) node[left] {$2$}
(A3) node[below] {$4$}
(A4) node[right] {$3$};

\draw[->] (A1) to (A2);
\draw[->] (A2) to (A3);
\draw[->] (A4) to (A3);
\draw[->] (A1) to (A4);
\end{tikzpicture}
\hfill
\begin{tikzpicture}[baseline=-.5ex]
\foreach \x in {1,...,4}{
\draw[fill] (90*\x:1) node (A\x) {} circle (2pt);
}

\draw (A1) node[above] {$1$} 
(A2) node[left] {$2$}
(A3) node[below] {$4$}
(A4) node[right] {$3$};

\draw[<-] (A1) to (A2);
\draw[->] (A2) to (A3);
\draw[->] (A4) to (A3);
\draw[<-] (A1) to (A4);
\end{tikzpicture}
\hfill
\begin{tikzpicture}[baseline=-.5ex]
\foreach \x in {1,...,4}{
\draw[fill] (90*\x:1) node (A\x) {} circle (2pt);
}

\draw (A1) node[above] {$1$} 
(A2) node[left] {$2$}
(A3) node[below] {$4$}
(A4) node[right] {$3$};

\draw[->] (A1) to (A2);
\draw[->] (A2) to (A3);
\draw[->] (A3) to (A4);
\draw[->] (A4) to (A1);
\draw[->] (A1) to (A3);
\end{tikzpicture}
\hfill 
\begin{tikzpicture}[baseline=-.5ex]
\foreach \x in {1,...,4}{
\draw[fill] (90*\x:1) node (A\x) {} circle (2pt);
}

\draw (A1) node[above] {$1$} 
(A2) node[left] {$2$}
(A3) node[below] {$4$}
(A4) node[right] {$3$};

\draw[<-] (A1) to (A2);
\draw[<-] (A2) to (A3);
\draw[<-] (A4) to (A3);
\draw[<-] (A1) to (A4);
\draw[-Implies,double distance=2pt] (A1) -- (A3);
\end{tikzpicture} 
\hfill

For the $\Z/2\Z$-action defined by 
\[
\tau(1) = 4,\quad \tau(2) =3,\quad \tau(3) = 2,\quad \tau(4) =1,
\]
the second quiver is the only $\Z/2\Z$-invariant quiver, which is $\Z/2\Z$-admissible. See~\ref{fig_A22_A1} in Appendix~\ref{appendix_actions_on_Dynkin_diagrams}. This proves the following lemma:
\begin{lemma}\label{lemma:A22}
Let $\quiver$ be the $\Z/2\Z$-invariant quiver of type $\exdynA_{2,2}$. Then, $\quiver$ is $\Z/2\Z$-admissible.
\end{lemma}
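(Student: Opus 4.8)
The plan is to reduce the statement to a finite, completely explicit verification, using that $\exdynA_{2,2}$ is of finite mutation type. First I would enumerate all quivers mutation equivalent to the standard acyclic orientation of the $4$-cycle of type $\exdynA_{2,2}$ (two arrows in each direction). By Theorem~\ref{thm_finite_type_classification}(2) this quiver is mutation-finite, so its mutation class is finite; a direct computation shows there are exactly four members up to isomorphism, namely the four quivers displayed above. (The same count $4$ is also recovered from the $p=q$ case of the formula of~\cite{BPRS11} with $p=2$.)

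Next I would determine which of the four quivers is fixed by the declared involution $\tau=(1\,4)(2\,3)$, viewed as acting on the labeled vertex set. In the first, third, and fourth quiver at least one arrow is reversed by $\tau$ — for instance $\tau$ sends the arrow $1\to 2$ of the first quiver to $4\to 3$, which is not present — so none of these is $\Z/2\Z$-invariant. In the second quiver the four arrows $2\to 1$, $2\to 4$, $3\to 1$, $3\to 4$ are permuted among themselves by $\tau$, so it is $\Z/2\Z$-invariant. Hence, up to the evident relabeling, the second quiver is the unique $\Z/2\Z$-invariant quiver of type $\exdynA_{2,2}$, and this is the quiver $\quiver$ in the statement.

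Finally I would check the two admissibility conditions of Definition~\ref{definition:admissible quiver}(2) for the two $G$-orbits $I_1=\{1,4\}$ and $I_2=\{2,3\}$. The adjacency matrix of the second quiver is exactly the matrix $\qbasis(\quiver)$ computed in Example~\ref{example_A22_to_A1}; reading off its entries gives $b_{1,4}=b_{2,3}=0$, which is condition~\ref{def_admissible_3}, and $b_{1,j}b_{4,j}\ge 0$ and $b_{2,j}b_{3,j}\ge 0$ for every $j$ (each of these products equals $0$ or $1$), which is condition~\ref{def_admissible_4}. Therefore $\quiver$ is $\Z/2\Z$-admissible.

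There is no real obstacle in this small case: the only step demanding any care is checking that the enumeration of the mutation class in the first step is complete, after which everything is a finite inspection. The substantive work of Theorem~\ref{thm_invaraint_implies_admissible} lies in the later subsections, where $\dynX$ runs through the infinite families $\exdynA_{n,n}$ and $\exdynD_n$ and the exceptional type $\exdynE$; there the brute-force enumeration is replaced by structural obstructions to being reduced to a mutation-infinite subquiver (Lemma~\ref{lemma:complete 4-graph}, Corollary~\ref{corollary:seed restriction}), together with Lemma~\ref{lemma_on_facets}.
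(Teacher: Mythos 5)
Your proposal is correct and follows essentially the same route as the paper: enumerate the four quivers in the mutation class of $\exdynA_{2,2}$ up to isomorphism, observe that only the second one is fixed by $\tau=(1\,4)(2\,3)$, and verify conditions \ref{def_admissible_3} and \ref{def_admissible_4} directly from its adjacency matrix (which is the matrix already computed in Example~\ref{example_A22_to_A1}). The paper's proof is just a more terse version of the same finite inspection.
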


\subsubsection{\texorpdfstring{$(\dynX, G, \dynY) = (\exdynA_{n,n}, \Z/2\Z, \dynD_{n+1}^{(2)})$}{(X, G, Y)=(affine Ann, Z/2Z, affine Dn+1 2)}}
\label{section_Ann}

Let $\quiver$ be a $\Z/2\Z$-invariant quiver on $[2n]$ of type~$\exdynA_{n,n}$ and 
$\qbasis=\qbasis(\quiver)$.  See~\ref{fig_Ann_Dn+1_2} in Appendix~\ref{appendix_actions_on_Dynkin_diagrams} for the $\Z/2\Z$-action.
To show the admissibility, it is enough to check the 
condition~\ref{def_admissible_4} because of 
Lemma~\ref{lemma:condition_3_holds_of_order_2}. 
\begin{lemma}\label{lemma:no bigons in affine A}
For any $i,j\in[2n]$, we obtain
\[
b_{i,j}b_{i,\tau(j)}\ge 0.
\]
\end{lemma}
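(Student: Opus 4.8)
The plan is to argue by contradiction, leveraging throughout that quivers of standard affine type are mutation-finite: assuming $b_{i,j}b_{i,\tau(j)}<0$, I will locate a forbidden induced (sub)quiver and invoke Corollary~\ref{corollary:seed restriction} (together with Lemma~\ref{lemma:mutation_finite_cannot_have_mutation_infinite_subquiver}). Two preliminary remarks organize everything. First, since $G=\Z/2\Z$, Lemma~\ref{lemma:condition_3_holds_of_order_2} gives $b_{p,\tau(p)}=0$ for every vertex $p$; in particular $b_{j,\tau(j)}=0$, while $\tau$-invariance gives $b_{i,j}=b_{\tau(i),\tau(j)}$ and $b_{i,\tau(j)}=b_{\tau(i),j}$ (so the assertion is exactly condition~\ref{def_admissible_4} with $i'=\tau(i)$). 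Second, if $j=\tau(j)$ the inequality reads $b_{i,j}^2\ge 0$, so I may assume $j\neq\tau(j)$; then the hypothesis forces $b_{i,j},b_{i,\tau(j)}\neq 0$ of opposite sign, hence also $i\notin\{j,\tau(j)\}$.

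First I would bound the edge multiplicities. Because $b_{j,\tau(j)}=0$, the induced subquiver $\quiver|_{\{i,j,\tau(j)\}}$ is a linear quiver of type $(|b_{i,j}|,|b_{i,\tau(j)}|)$; if $|b_{i,j}|\,|b_{i,\tau(j)}|\ge 2$ it is mutation-infinite, so $\quiver\reducedto(\text{mutation-infinite quiver})$, contradicting Corollary~\ref{corollary:seed restriction}(1). Thus $|b_{i,j}|=|b_{i,\tau(j)}|=1$. Next comes the \emph{fixed-point case} $\tau(i)=i$: here $\mutation_i$ commutes with $\tau$, so $\quiver':=\mutation_i(\quiver)$ is again $\tau$-invariant and of type~$\exdynA_{n,n}$, while a direct computation via the mutation formula (using $b_{j,\tau(j)}=0$ and $|b_{i,j}|=|b_{i,\tau(j)}|=1$ of opposite sign) yields $b'_{j,\tau(j)}=\pm1\neq 0$, contradicting Lemma~\ref{lemma:condition_3_holds_of_order_2} applied to $\quiver'$.

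What remains, and what I expect to be the crux, is the case $\tau(i)\neq i$. Then $i,j,\tau(i),\tau(j)$ are four distinct vertices (a coincidence such as $\tau(i)=j$ would give $\tau(j)=i$ and hence $b_{i,\tau(j)}=b_{i,i}=0$), and by the sign and multiplicity data together with $\tau$-invariance the induced subquiver on these four vertices is the directed $4$-cycle $i\to j\to\tau(i)\to\tau(j)\to i$ with simple arrows; I must show this cannot sit inside a quiver of type~$\exdynA_{n,n}$. My main line of attack is Lemma~\ref{lemma_on_facets}: there is a vertex $k$ with $\quiver|_{[2n]\setminus\{k\}}$ of type~$\dynA_{2n-1}$ (deleting one vertex of the $2n$-cycle leaves a path). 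If such a $k$ can be taken outside $\{i,j,\tau(i),\tau(j)\}$, the directed $4$-cycle becomes an induced subquiver of a quiver of type~$\dynA_{2n-1}$, which is impossible: a directed $4$-cycle has type~$\dynD_4$ (Vatne), whereas an induced subquiver of a type-$\dynA$ quiver (the quiver of a triangulated disk) is a disjoint union of type-$\dynA$ quivers, so contains no directed $4$-cycle. The residual possibility is that every valid facet vertex lies on the $4$-cycle; since $\{i,j,\tau(i),\tau(j)\}$ consists of four non-fixed vertices while $\exdynA_{n,n}$ has exactly two $\tau$-fixed vertices and $2n\ge 6$ here (the case $n=2$ being Lemma~\ref{lemma:A22}), I expect this to be very restricted, and I would dispatch it directly using the fixed vertices $f_1,f_2$: $\tau$-invariance forces $b_{f_\ell,i}=b_{f_\ell,\tau(i)}$ and $b_{f_\ell,j}=b_{f_\ell,\tau(j)}$, and combining these with $b_{i,\tau(i)}=b_{j,\tau(j)}=0$, the connectedness of $\quiver$, and a mutation $\mutation_{f_\ell}$ (which again commutes with $\tau$) one is pushed into either a linear quiver or cyclic triangle carrying a double edge, hence mutation-infinite, or a $\tau$-invariant quiver violating Lemma~\ref{lemma:condition_3_holds_of_order_2}. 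Making this last elimination airtight — ruling out the $\tau$-invariant directed $4$-cycle when the facet vertex cannot be chosen off it — is the genuine obstacle; everything preceding it is routine.
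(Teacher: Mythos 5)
Most of your proposal tracks the paper's own argument: you reduce to a $\tau$-invariant directed $4$-cycle on $\{i,j,\tau(i),\tau(j)\}$, invoke Lemma~\ref{lemma_on_facets} to produce a vertex $\ell$ missed by the mutation sequence, and, when $\ell$ can be taken off the cycle, conclude because a quiver of type~$\dynA_{2n-1}$ contains no directed $4$-cycle (the paper cites \cite[Proposition~2.4]{BuanVatne08}; your route via Vatne's $\dynD_4$ classification is an acceptable substitute). Two of your preliminary steps are harmless but superfluous: when $\tau(i)=i$, invariance already gives $b_{i,j}=b_{\tau(i),\tau(j)}=b_{i,\tau(j)}$, so the product is a square and no mutation at $i$ is needed; and the multiplicity bound $|b_{i,j}|=|b_{i,\tau(j)}|=1$ plays no role once you know the four vertices carry a directed $4$-cycle of any multiplicity.

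The genuine gap is exactly the one you flag: the case where the vertex $\ell$ supplied by Lemma~\ref{lemma_on_facets} lies in $\{i,j,\tau(i),\tau(j)\}$. You cannot ``choose'' $\ell$ off the cycle --- the lemma only asserts existence of one such vertex --- and your fallback does not close the case: connectedness guarantees that \emph{some} vertex outside the $4$-cycle is adjacent to it, but nothing forces that vertex to be one of the two $\tau$-fixed vertices $f_1,f_2$, so the symmetry relations $b_{f_\ell,i}=b_{f_\ell,\tau(i)}$ you want to exploit need not be available where you need them, and the ensuing case analysis is left as a sketch. The paper closes this case by a different mechanism: if $\ell$ lies on the cycle, the entire orbit $\{\ell,\tau(\ell)\}$ is missed by the mutation sequence; deleting this orbit disconnects the underlying $2n$-cycle of $\quiver(\exdynA_{n,n})$ into two arcs, and adding $\ell,\tau(\ell)$ back yields two subsets $R,S$ with $R\cap S=\{\ell,\tau(\ell)\}$ such that $\quiver|_R$ and $\quiver|_S$ are each obtained from the corresponding type-$\dynA$ restrictions of the initial quiver by mutations (no edge is ever created across an unmutated orbit). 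The remaining orbit $\{j',\tau(j')\}$ of the $4$-cycle lies entirely in $R$ or entirely in $S$, so the directed $4$-cycle again sits inside a quiver of type~$\dynA$, a contradiction. You would need to supply an argument of this kind --- or genuinely complete your adjacency reduction, e.g.\ by taking a nearest $\tau$-orbit adjacent to the cycle and running the reductions of \eqref{equation:squares}--\eqref{equation:square with two diagonals 2}, as the paper does in the $\exdynD_{2n}$ case --- before the proof is complete.
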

\begin{proof}
If one of $i$ or $j$ is $1$ or $2n$, then we are done since $\tau(i)=i$ or 
$\tau(j)=j$.
Assume on the contrary that $b_{i,j}b_{i,\tau(j)}<0$ for some $i,j\neq 1, 2n$. 
We may assume that $1<i,j\le n$ and
\[
b_{\tau(i),\tau(j)}=b_{i,j}<0<b_{i,\tau(j)}=b_{\tau(i),j}.
\]
Then we have a directed cycle $\quiver|_{\{i,j,\tau(i),\tau(j)\}}$
\[
\quiver|_{\{i,j,\tau(i),\tau(j)\}}=\begin{tikzpicture}[baseline=-.5ex]
\draw[fill]
(0,1) circle (2pt) node (Ai) {}
(1,1) circle (2pt) node (Aj) {}
(0,-1) circle (2pt) node (Ataui) {}
(1,-1) circle (2pt) node (Atauj) {};
\draw[->] (Ai) -- (Atauj) node[below] {$\tau(j)$};
\draw[->] (Atauj) -- (Ataui) node[below] {$\tau(i)$};
\draw[->] (Ataui) -- (Aj) node[above] {$j$};
\draw[->] (Aj) -- (Ai) node[above] {$i$};
\end{tikzpicture}
\]

On the other hand, $\quiver$ is obtained from the initial quiver 
$\quiver(\exdynA_{n,n})$ via a sequence of mutations 
\[
\quiver = 
(\mutation^{\exdynA_{n,n}}_{j_L}\cdots\mutation^{\exdynA_{n,n}}_{j_1})(\quiver(\exdynA_{n,n})),
\]
where the sequence $j_1,\dots, j_L$ misses at least index $\ell\in[2n]$ by Lemma~\ref{lemma_on_facets}. 

If $\ell\not\in\{i,j,\tau(i),\tau(j)\}$, then the restriction 
\[
\quiver|_{[2n]\setminus\{\ell\}}=
(\mutation^{\dynA_{2n-1}}_{j_L}\cdots\mutation^{\dynA_{2n-1}}_{j_1})
(\quiver(\dynA_{2n-1})))
\]
is of type~$\dynA_{2n-1}$. However, this yields a contradiction as any quiver mutation equivalent to $\dynA_{2n-1}$ never have a directed cycle of length $4$, as shown in ~\cite[Proposition~2.4]{BuanVatne08}.

If $\ell\in\{i,j,\tau(i),\tau(j)\}$, then the sequence $j_1,\dots, j_L$ misses 
$\tau(\ell)$ as well by Lemma~\ref{lemma_on_facets}.
Hence the directed cycle $\quiver|_{\{i,j,\tau(i),\tau(j)\}}$ should be 
contained in one of two restrictions $\quiver|_{R}$ and $\quiver|_{S}$ of 
type~$\dynA_{2\ell-1}$ and $\dynA_{2(n-\ell)-3}$, where
\begin{align*}
R&=\{1,\dots,\ell, n+1,\dots,n+\ell-1\},&
S&=\{\ell,\dots,n, n+\ell-1,\dots,2n\}.
\end{align*}

On the other hand, two restrictions of $\quiver(\exdynA_{n,n})$ are of 
type~$\dynA_{2\ell-1}$ and $\dynA_{2(n-\ell)+3}$, respectively.
\begin{align*}
\quiver(\exdynA_{n,n})|_{R}&=\begin{tikzpicture}[baseline=-.5ex]
\draw[fill] 
(-3,0) circle(2pt) node (A1) {}
(-1.5,1) circle(2pt) node (A2) {}
(0,1) circle(2pt) node (A3) {}
(1.5,1) circle(2pt) node (An) {}
(-1.5,-1) circle(2pt) node (An+1) {}
(0,-1) circle(2pt) node (An+2) {};
\draw[fill,lightgray] 
(1.5,-1) circle(2pt) node (A2n-1) {}
(3,0) circle(2pt) node (A2n) {};
\draw[->] (A1) node[left] {$1$} -- (A2);
\draw[->] (A1) -- (An+1);
\draw[dashed] (A3) node[above] {$\ell$} -- (A2) node[above] {$2$};
\draw[dashed] (An+2) node[below] {$n+\ell-1$} -- (An+1) node[below] {$n+1$};
\draw[dashed, lightgray] (A3) -- (An) node[above] {$n$};
\draw[dashed, lightgray] (An+2) -- (A2n-1) node[below] {$2n-1$};
\draw[->, lightgray] (An) -- (A2n) node[right] {$2n$};
\draw[->, lightgray] (A2n-1) -- (A2n);
\end{tikzpicture} \quad \text{is of type~$\dynA_{2\ell-1}$,}\\
\quiver(\exdynA_{n,n})|_{S}&=\begin{tikzpicture}[baseline=-.5ex]
\draw[fill, lightgray] 
(-3,0) circle(2pt) node (A1) {}
(-1.5,1) circle(2pt) node (A2) {}
(-1.5,-1) circle(2pt) node (An+1) {};
\draw[fill] 
(0,1) circle(2pt) node (A3) {}
(1.5,1) circle(2pt) node (An) {}
(0,-1) circle(2pt) node (An+2) {}
(1.5,-1) circle(2pt) node (A2n-1) {}
(3,0) circle(2pt) node (A2n) {};
\draw[->, lightgray] (A1) node[left] {$1$} -- (A2);
\draw[->, lightgray] (A1) -- (An+1);
\draw[dashed, lightgray] (A3) -- (A2) node[above] {$2$};
\draw[dashed, lightgray] (An+2) -- (An+1) node[below] {$n+1$};
\draw[dashed] (A3) node[above] {$\ell$} -- (An) node[above] {$n$};
\draw[dashed] (An+2) node[below] {$n+\ell-1$} -- (A2n-1) node[below] {$2n-1$};
\draw[->] (An) -- (A2n) node[right] {$2n$};
\draw[->] (A2n-1) -- (A2n);
\end{tikzpicture} \quad \text{is of type~$\dynA_{2(n-\ell)+3}$.}.
\end{align*}      
Hence, there is a sequence of mutations either from $\quiver(\exdynA_{n,n})|_R$ 
to $\quiver|_R$ or $\quiver(\exdynA_{n,n})|_S$ to $\quiver|_S$, which yields a 
contradiction again.
\end{proof}

\begin{proof}[Proof of Theorem~\ref{thm_invaraint_implies_admissible} for 
$\dynX = \exdynA_{n,n}$]
For a $\Z/2\Z$-invariant quiver $\quiver$ of type~$\exdynA_{n,n}$, the 
conditions~\ref{def_admissible_3} and \ref{def_admissible_4} in 
Definition~\ref{definition:admissible quiver}(2) 
follows from Lemmas~\ref{lemma:condition_3_holds_of_order_2} and \ref{lemma:no 
bigons in affine A}. Combining this with Lemma~\ref{lemma:A22}, the quiver $\quiver$ is $\Z/2\Z$-admissible as claimed.
\end{proof}

\subsection{\texorpdfstring{Admissibility of quivers of 
type~$\exdynE$}{Admissibility of quivers of affine E}}

\subsubsection{\texorpdfstring{$(\dynX, G, \dynY) = (\exdynE_6, \Z/3\Z, \exdynG_2)$}{(X, G, Y)=(affine E6, Z/3Z, affine G2)}}
\label{section_E6_G2}

Let $\quiver$ be a $\Z/3\Z$-invariant quiver on $[7]$ of type $\exdynE_6$
and $\qbasis=\qbasis(\quiver)$. See~\ref{fig_E6_Z3} in 
Appendix~\ref{appendix_actions_on_Dynkin_diagrams}.

\begin{lemma}\label{lemma:no monogons in affine E6 with Z/3Z-action}
We have
\[
b_{2,4}=b_{4,6}=b_{6,2}=0 \quad\text{ and }\quad
b_{3,5}=b_{5,7}=b_{7,3}=0.
\]
\end{lemma}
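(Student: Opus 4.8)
Suppose, for contradiction, that $b_{2,4}\neq 0$; the vanishing of $b_{3,5}$ is obtained by the symmetric argument with the two nontrivial $\Z/3\Z$-orbits $O_2=\{2,4,6\}$ and $O_3=\{3,5,7\}$ interchanged, and once one entry in each orbit is known to vanish, $\Z/3\Z$-invariance forces the remaining four equalities. By $\Z/3\Z$-invariance we have $b_{2,4}=b_{4,6}=b_{6,2}=:a$ and $b_{1,2}=b_{1,4}=b_{1,6}$, so the restriction $\quiver|_{\{2,4,6\}}$ is a cyclic triangle of type $(|a|,|a|,|a|)$ with $|a|\ge 1$.

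Since $\quiver$ is of affine type it is mutation-finite by Theorem~\ref{thm_finite_type_classification}(2), hence so is $\quiver|_{\{2,4,6\}}$ by Lemma~\ref{lemma:mutation_finite_cannot_have_mutation_infinite_subquiver}; as a cyclic triangle of type $(m,m,m)$ with $m\ge 3$ satisfies $m\cdot m>2m\ge 2$ and is therefore mutation-infinite, we get $|a|\le 2$. Next, if the common value $b_{1,2}=b_{1,4}=b_{1,6}$ were nonzero, then $\quiver|_{\{1,2,4,6\}}$ would be a quiver on four vertices in which every pair of vertices is connected and which contains the cyclic triangle on $\{2,4,6\}$ of type $(|a|,|a|,|a|)\neq(1,1,2)$; Corollary~\ref{corollary:complete 4-graph} would then make it mutation-infinite, contradicting mutation-finiteness of $\quiver$. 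Hence the centre vertex $1$ is disconnected from $O_2$, that is, $b_{1,2}=b_{1,4}=b_{1,6}=0$.

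Now I would invoke Lemma~\ref{lemma_on_facets}. Choosing $\quiver(\exdynE_6)$ to be a $\Z/3\Z$-invariant acyclic quiver of type $\exdynE_6$, the quiver $\quiver$ is obtained from $\quiver(\exdynE_6)$ by mutating at all vertices outside a single index $\ell$; conjugating the mutation sequence by the $\Z/3\Z$-action (which fixes both $\quiver$ and $\quiver(\exdynE_6)$) shows the same holds with $\ell$ replaced by any vertex of its $\Z/3\Z$-orbit, so $\quiver|_{[7]\setminus\{j\}}$ is mutation equivalent to $\quiver(\exdynE_6)|_{[7]\setminus\{j\}}$ for every $j$ in that orbit. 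There are three cases. If the orbit is $\{1\}$, then $\quiver|_{O_2\cup O_3}$ is mutation equivalent to $\quiver(\exdynE_6)|_{O_2\cup O_3}$, a disjoint union of three copies of $\dynA_2$ whose connected components have two vertices, so it cannot contain the triangle on $\{2,4,6\}$, a contradiction. If the orbit is $O_2$, then $\quiver|_{\{1,3,4,5,6,7\}}$ is mutation equivalent to $\dynA_5\sqcup\dynA_1$; since $\quiver$, being mutation equivalent to a connected quiver, is connected and $1$ is not adjacent to $O_2$, vertex $1$ must be adjacent to $O_3$ and hence, by invariance, to all of $3,5,7$, so $\{1,3,5,7\}$ together with the adjacent pair $\{4,6\}$ (here $a\neq 0$) lies in a single connected component of a quiver whose components have at most five vertices, again a contradiction. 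The remaining case is that the orbit is $O_3$, in which case $\quiver|_{\{1,2,4,5,6,7\}}$ is mutation equivalent to $\dynE_6$; since a quiver of type $\dynE_6$ is connected with all arrows of multiplicity $1$, this already forces $|a|=1$, so $\quiver|_{\{2,4,6\}}$ is a directed $3$-cycle, and this configuration must still be ruled out.

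The main obstacle is precisely this last configuration: the mutation-infiniteness criteria of Section~\ref{section:proof of admissibility} do not by themselves exclude a directed $3$-cycle (type $(1,1,1)$) or a Markov quiver (type $(2,2,2)$) sitting inside $\quiver|_{\{2,4,6\}}$, so the exclusion must use the global constraints, namely that $\quiver$ has only seven vertices, is connected, and is a near-exhaustive mutation of the acyclic $\exdynE_6$ quiver. Concretely, in the case where the orbit equals $O_3$ one can restrict $\quiver|_{\{1,2,4,5,6,7\}}$ further: vertex $1$ is a valence-$2$ vertex adjacent only to $\{5,7\}$, and deleting it yields a connected quiver on $\{2,4,5,6\}$ of finite type $\dynA/\dynD$ containing a directed $3$-cycle on $\{2,4,6\}$; tracking how the vertex $5$ attaches to this triangle (it cannot be adjacent to all of $2,4,6$ by Corollary~\ref{corollary:complete 4-graph}, and it cannot be disconnected from all of them without making $\{2,4,6\}$ a connected component of $\quiver$) pins down the $4$-vertex type and contradicts its being a restriction of a type $\dynE_6$ quiver. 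I would also note that, since the folded pattern of type $\exdynG_2$ has only finitely many exchange matrices, this residual case can alternatively be dispatched by a short finite check, which could be included as a fallback.
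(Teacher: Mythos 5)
Your reduction via Lemma~\ref{lemma_on_facets} is a genuinely different and partly attractive route: choosing a $\Z/3\Z$-invariant acyclic representative, conjugating the mutation sequence by the group action, and counting connected components of $\quiver|_{[7]\setminus\{\ell\}}$ correctly disposes of the cases $\ell=1$ and $\ell\in\{2,4,6\}$, and in the remaining case correctly forces $|a|=1$. (The paper never uses Lemma~\ref{lemma_on_facets} here; it relies throughout on the complete-$4$-graph criterion.) The genuine gap is in your last case, which is exactly where the paper spends almost all of its effort. When $\ell\in\{3,5,7\}$ you propose to restrict to $\{2,4,5,6\}$, track how $5$ attaches to the oriented triangle on $\{2,4,6\}$, and contradict ``its being a restriction of a type $\dynE_6$ quiver.'' But an oriented triangle with a single pendant vertex attached to one of its corners is mutation equivalent to $\dynA_4$ (mutate at the corner not adjacent to the pendant), hence of finite type and perfectly consistent with being a full subquiver of a quiver of type $\dynE_6$; the same goes for the pendant attached to two corners. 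No local analysis of a $4$-vertex (or even $5$-vertex) restriction can produce the contradiction in this configuration, so the step ``pins down the $4$-vertex type and contradicts\dots'' does not go through as stated.

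The paper closes this case globally: setting $x=b_{3,2}=b_{5,4}=b_{7,6}$, $y=b_{3,4}=b_{5,6}=b_{7,2}$, $z=b_{3,6}=b_{5,2}=b_{7,4}$ and $m=b_{1,3}=b_{1,5}=b_{1,7}\neq 0$, it first forces one of $x,y,z$ to vanish via Corollary~\ref{corollary:complete 4-graph} applied to $\{2,3,4,6\}$, then applies $\mutation_3\mutation_5\mutation_7$ (preceded by $\mutation_1$ if needed) to create arrows of multiplicity $m(x+y)$ from $1$ to each of $2,4,6$, landing in the forbidden complete $4$-graph on $\{1,2,4,6\}$; the single configuration where this fails ($xy<0$ with $n+xy=0$) is eliminated by an explicit further mutation reducing to a cyclic triangle of type $(1,2,2)$. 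Some argument of this kind, using vertex $1$ and the whole orbit $\{3,5,7\}$ at once, is unavoidable, and your proposal does not supply it; the ``short finite check'' you offer as a fallback is precisely the experimental verification the paper is written to avoid. Two smaller points: in your second case you need (and should state) that at least one of $x,y,z$ is nonzero so that $\{4,6\}$ is not an isolated component, and the deletion of vertex $1$ from $\{1,2,4,5,6,7\}$ leaves five vertices $\{2,4,5,6,7\}$, not four.
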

\begin{proof}
Suppose that the assertion does not hold. Then by relabelling vertices if 
necessary, we may assume that $b_{2,4}=b_{4,6}=b_{6,2}=n\ge 1$ and so 
$\quiver|_{\{2,4,6\}}$ is a cyclic triangle of type~$(n,n,n)$.
\[
\quiver|_{\{2,4,6\}}=
\begin{tikzpicture}[baseline=-.5ex]
\draw[fill] (210:1) node (A3) {} circle (2pt) (90:1) node (A2) {} circle (2pt) 
(-30:1) node (A4) {} circle (2pt);
\draw (A2) node[above] {$2$} (A3) node[left] {$4$} (A4) node[right] {$6$};
\draw[->] (A2) -- (A3) node[midway, above left] {$n$};
\draw[->] (A3) -- (A4) node[midway, below] {$n$};
\draw[->] (A4) -- (A2) node[midway, above right] {$n$};
\end{tikzpicture}
\]

If $b_{1,2}=b_{1,4}=b_{1,6}\neq 0$, then every pair of distinct vertices 
in $\quiver|_{\{1,2,4,6\}}$ is connected and 
it contains a cyclic triangle of type~$(n,n,n)$. This restriction is 
mutation-infinite by Corollary~\ref{corollary:complete 4-graph} and so is 
$\quiver$. Since $\exdynE_6$ is mutation-finite, this is a contradiction. 
Hence, 
we obtain 
\[
b_{1,2}=b_{1,4}=b_{1,6}=0.
\]
Similarly, we have
\[
b_{3,5} = b_{5,7} = b_{7,3} = 0,
\]
otherwise, the restriction $\quiver|_{\{1,3,5,7\}}$ is mutation-infinite. 
However, since $\quiver$ is connected, we have
\begin{align*}
b_{1,3}=b_{1,5}=b_{1,7}=m\neq 0
\end{align*}
and at least one of 
\begin{align*}
x&=b_{3,2}=b_{5,4}=b_{7,6},&
y&=b_{3,4}=b_{5,6}=b_{7,2},&
z&=b_{3,6}=b_{5,2}=b_{7,4}
\end{align*}
is non-zero.

If none of $x,y$ and $z$ is zero, then the restriction $\quiver|_{\{2,3,4,6\}}$ 
is 
again mutation-infinite by Corollary~\ref{corollary:complete 4-graph} since 
every pair of distinct vertices in $\quiver|_{\{2,3,4,6\}}$ is connected  
and it contains a cyclic triangle of type~$(n,n,n)$. Hence, we may assume that 
either
\[
\begin{array}{ccc}
\text{(i) } x\neq 0, y=z=0& \text{ or }&
\text{(ii) } x\neq 0, y\neq 0, z=0\\
\begin{tikzpicture}[baseline=-.5ex]
\draw[fill] (0,0) node (A1) {} circle (2pt) (210:2) node (A3) {} circle (2pt) 
(90:2) node (A2) {} circle (2pt) (-30:2) node (A4) {} circle (2pt) (150:1) node 
(A5) {} circle (2pt) (270:1) node (A6) {} circle (2pt) (30:1) node (A7) {} 
circle (2pt);
\draw (A1) node[above] {$1$} (A5) node[above left] {$3$} (A6) node[below] {$5$} 
(A7) node[above right] {$7$};
\draw (A2) node[above] {$2$} (A3) node[left] {$4$} (A4) node[right] {$6$};
\draw[->] (95:2) arc (95:205:2) node[midway, above left] {$n$};
\draw[->] (215:2) arc (215:325:2) node[midway, below] {$n$};
\draw[->] (335:2) arc (335:445:2) node[midway, above right] {$n$};
\draw (A1) -- (A5) node[midway,below left] {$m$};
\draw (A1) -- (A6) node[midway,right] {$m$};
\draw (A1) -- (A7) node[midway,below right] {$m$};
\draw (A5) -- (A2) node[midway, above left] {$x$};
\draw (A6) -- (A3) node[midway, below] {$x$};
\draw (A7) -- (A4) node[midway, above right] {$x$};
\end{tikzpicture}& 
&\begin{tikzpicture}[baseline=-.5ex]
\draw[fill] (0,0) node (A1) {} circle (2pt) (210:2) node (A3) {} circle (2pt) 
(90:2) node (A2) {} circle (2pt) (-30:2) node (A4) {} circle (2pt) (150:1) node 
(A5) {} circle (2pt) (270:1) node (A6) {} circle (2pt) (30:1) node (A7) {} 
circle (2pt);
\draw (A1) node[above] {$1$} (A5) node[above left] {$3$} (A6) node[below] {$5$} 
(A7) node[above right] {$7$};
\draw (A2) node[above] {$2$} (A3) node[left] {$4$} (A4) node[right] {$6$};
\draw[->] (95:2) arc (95:205:2) node[midway, above left] {$n$};
\draw[->] (215:2) arc (215:325:2) node[midway, below] {$n$};
\draw[->] (335:2) arc (335:445:2) node[midway, above right] {$n$};
\draw (A1) -- (A5) node[midway,below left] {$m$};
\draw (A1) -- (A6) node[midway,right] {$m$};
\draw (A1) -- (A7) node[midway,below right] {$m$};
\draw (A5) -- (A2) node[midway, above left] {$x$};
\draw (A6) -- (A3) node[midway, below] {$x$};
\draw (A7) -- (A4) node[midway, above right] {$x$};
\draw (A5) -- (A3) node[midway, above left] {$y$};
\draw (A6) -- (A4) node[midway, below] {$y$};
\draw (A7) -- (A2) node[midway, above right] {$y$};
\end{tikzpicture}
\end{array}
\]

\noindent  \textbf{Case (i)} Suppose that $x\neq0$ but $y=z=0$. By 
taking a mutation $\mutation_1$ 
if necessary, and mutations $\mutation_3,\mutation_5,\mutation_7$, we obtain a new quiver 
$\quiver'$ with $mx$ edges from $1$ to $2$. That is, for 
$\qbasis'=\qbasis(\quiver')=(b'_{i,j})$
\[
b'_{1,2} = b'_{1,4}=b'_{1,6} = mx\neq 0.
\]
Hence $\quiver\succ\quiver'|_{\{1,2,4,6\}}$, which is mutation-infinite and 
therefore this is a contradiction.
\smallskip

\noindent \textbf{Case (ii)} Suppose that $x\neq0, y\neq 0, z=0$.
Similarly, if $xy>0$ (equivalently, the signs of $x$ and $y$ are same), then by 
taking a mutation 
$\mutation_1$ if necessary, and mutations $\mutation_3,\mutation_5,\mutation_7$, we obtain $\quiver'$ 
so that
\[
b'_{1,2}=b'_{1,4}=b'_{1,6} = m(x+y)\neq 0.
\]
Even if $xy<0$, unless $\quiver|_{\{2,3,4\}}$ is cyclic and $n+xy=0$, the same 
argument still holds. 

Suppose that $y<0<x$, $n+xy=0$, $m>0$, and quivers $\quiver|_{\{2,3,4\}}$, 
$\quiver|_{\{4,5,6\}}$, and $\quiver|_{\{2,6,7\}}$ are cyclic. Then the quiver 
$\quiver'$ obtained by applying the mutations $\mutation_3,\mutation_5$, and $\mutation_7$ on 
$\quiver$ looks like
\[
\quiver'=\begin{tikzpicture}[baseline=-.5ex]
\draw[fill] (0,0) node (A1) {} circle (2pt) (210:2) node (A3) {} circle (2pt) 
(90:2) node (A2) {} circle (2pt) (-30:2) node (A4) {} circle (2pt) (150:2) node 
(A5) {} circle (2pt) (270:2) node (A6) {} circle (2pt) (30:2) node (A7) {} 
circle (2pt);
\draw (A1) node[above right] {$1$} (A5) node[above left] {$3$} (A6) node[below] 
{$5$} (A7) node[above right] {$7$};
\draw (A2) node[above] {$2$} (A3) node[below left] {$4$} (A4) node[below right] 
{$6$};
\draw[->] (A5) -- (A1) node[midway,above right] {$m$};
\draw[->] (A6) -- (A1) node[midway,left] {$m$};
\draw[->] (A7) -- (A1) node[midway,below right] {$m$};
\draw[->] (A2) -- (A5) node[midway, above left] {$x$};
\draw[->] (A3) -- (A6) node[midway, below left] {$x$};
\draw[->] (A4) -- (A7) node[midway, right] {$x$};
\draw[->] (A5) -- (A3) node[midway, left] {$|y|$};
\draw[->] (A6) -- (A4) node[midway, below right] {$|y|$};
\draw[->] (A7) -- (A2) node[midway, above right] {$|y|$};
\draw[->] (A1) -- (A2) node[midway, right] {$mx$};
\draw[->] (A1) -- (A3) node[midway, above left] {$mx$};
\draw[->] (A1) -- (A4) node[midway, below left] {$mx$};
\end{tikzpicture}
\]
Then the restriction $\quiver'|_{\{1,3,4\}}$ is an acyclic triangle of 
type~$(m, mx, |y|)$, and so it is mutation-infinite unless $m \cdot mx \cdot 
|y|=1$. 
That is, $x=1, m=1$, and $|y|=1$. Then due to the classification of 
mutation-finite quivers, this is not mutation-finite. Indeed, after the 
mutations $\mutation_1, \mutation_2, \mutation_4$ and $\mutation_6$, the quiver will be reduced to a 
cyclic triangle of type~$(1,2,2)$.
\[
\begin{tikzpicture}[baseline=-.5ex]
\draw[fill] (0,0) node (A1) {} circle (2pt) (210:1) node (A3) {} circle (2pt) 
(90:1) node (A2) {} circle (2pt) (-30:1) node (A4) {} circle (2pt) (150:1) node 
(A5) {} circle (2pt) (270:1) node (A6) {} circle (2pt) (30:1) node (A7) {} 
circle (2pt);
\draw (A1) node[above right] {$1$} (A5) node[above left] {$3$} (A6) node[below] 
{$5$} (A7) node[above right] {$7$};
\draw (A2) node[above] {$2$} (A3) node[below left] {$4$} (A4) node[below right] 
{$6$};
\draw[->] (A5) -- (A1);
\draw[->] (A6) -- (A1);
\draw[->] (A7) -- (A1);
\draw[->] (A2) -- (A5);
\draw[->] (A3) -- (A6);
\draw[->] (A4) -- (A7);
\draw[->] (A5) -- (A3);
\draw[->] (A6) -- (A4);
\draw[->] (A7) -- (A2);
\draw[->] (A1) -- (A2);
\draw[->] (A1) -- (A3);
\draw[->] (A1) -- (A4);
\begin{scope}[xshift=4cm]
\draw[fill] (0,0) node (A1) {} circle (2pt) (210:1) node (A3) {} circle (2pt) 
(90:1) node (A2) {} circle (2pt) (-30:1) node (A4) {} circle (2pt) (150:1) node 
(A5) {} circle (2pt) (270:1) node (A6) {} circle (2pt) (30:1) node (A7) {} 
circle (2pt);
\draw (A1) node[above right] {$1$} (A5) node[above left] {$3$} (A6) node[below] 
{$5$} (A7) node[above right] {$7$};
\draw (A2) node[above] {$2$} (A3) node[below left] {$4$} (A4) node[below right] 
{$6$};
\draw[->] (A1) -- (A2);
\draw[->] (A1) -- (A3);
\draw[->] (A1) -- (A4);
\draw[-Implies,double distance=2pt] (A5) -- (A1);
\draw[-Implies,double distance=2pt] (A6) -- (A1);
\draw[-Implies,double distance=2pt] (A7) -- (A1);
\draw[-Implies,double distance=2pt] (A2) -- (A7);
\draw[-Implies,double distance=2pt] (A3) -- (A5);
\draw[-Implies,double distance=2pt] (A4) -- (A6);
\draw[->] (A2) to[out=-120,in=120] (A6);
\draw[->] (A3) to[out=0,in=240] (A7);
\draw[->] (A4) to[out=120,in=0] (A5);
\end{scope}
\draw[->] (1.5,0) -- (2.5,0) node[midway,above] {$\mutation_6\mutation_4\mutation_2\mutation_1$};
\draw (6,0) node {$\succ$};
\begin{scope}[xshift=8cm]
\draw[fill] (0,0) node (A1) {} circle (2pt) (210:1) node (A3) {} circle (2pt) 
(150:1) node (A5) {} circle (2pt);
\begin{scope}[lightgray]
\draw[fill] (90:1) node (A2) {} circle (2pt) (-30:1) node (A4) {} circle (2pt) 
(270:1) node (A6) {} circle (2pt) (30:1) node (A7) {} circle (2pt);
\draw (A6) node[below] {$5$} (A7) node[above right] {$7$}
(A2) node[above] {$2$} (A4) node[below right] {$6$};
\draw[->] (A1) -- (A2);
\draw[->] (A1) -- (A4);
\draw[-Implies,double distance=2pt] (A6) -- (A1);
\draw[-Implies,double distance=2pt] (A7) -- (A1);
\draw[-Implies,double distance=2pt] (A2) -- (A7);
\draw[-Implies,double distance=2pt] (A4) -- (A6);
\draw[->] (A2) to[out=-120,in=120] (A6);
\draw[->] (A3) to[out=0,in=240] (A7);
\draw[->] (A4) to[out=120,in=0] (A5);
\end{scope}
\draw (A1) node[above right] {$1$} (A5) node[above left] {$3$} (A3) node[below 
left] {$4$} ;

\draw[->] (A1) -- (A3);
\draw[-Implies,double distance=2pt] (A5) -- (A1);
\draw[-Implies,double distance=2pt] (A3) -- (A5);
\end{scope}
\end{tikzpicture}
\]
This yields a contradiction and we are done.
\end{proof}

\begin{lemma}\label{lemma:no bigons in affine E6 with Z/3Z-action}
For any $i,j\in[7]$, we have
\[
b_{i,j}b_{i,\tau(j)}\ge 0.
\]
\end{lemma}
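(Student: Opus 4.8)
The plan is to follow the scheme of the proof of Lemma~\ref{lemma:no bigons in affine A}, feeding in the extra input of Lemma~\ref{lemma:no monogons in affine E6 with Z/3Z-action}. The easy instances of $b_{i,j}b_{i,\tau(j)}\ge 0$ disappear at once: if $i=1$ then $b_{1,j}=b_{1,\tau(j)}$ by $\tau$-invariance and the product is a square; if $j$ lies in the fixed orbit then $\tau(j)=j$; and if $i,j$ lie in the same non-fixed orbit then one of $b_{i,j}$, $b_{i,\tau(j)}$ is $0$ by Lemma~\ref{lemma:no monogons in affine E6 with Z/3Z-action}. Using $b_{i,j}=-b_{j,i}$ and $\tau$-invariance, every remaining instance reduces to the single claim that the three integers
\[
x=b_{2,3},\qquad y=b_{2,5},\qquad z=b_{2,7}
\]
(which equal their $\tau$-translates $b_{4,5}=b_{6,7}=x$, and so on) have no two of strictly opposite sign. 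By Lemma~\ref{lemma:no monogons in affine E6 with Z/3Z-action} the sets $\{2,4,6\}$ and $\{3,5,7\}$ are independent, so $\quiver|_{\{2,3,5\}}$, $\quiver|_{\{2,3,7\}}$, $\quiver|_{\{2,5,7\}}$ are linear quivers of types $(|x|,|y|)$, $(|x|,|z|)$, $(|y|,|z|)$; since $\exdynE_6$ is mutation-finite and a linear quiver of type $(a,b)$ with $ab\ge 2$ is mutation-infinite, Lemma~\ref{lemma:mutation_finite_cannot_have_mutation_infinite_subquiver} forces $|x||y|,|x||z|,|y||z|\le 1$. Hence, discarding the trivial case in which at most one of $x,y,z$ is nonzero, every nonzero entry among $x,y,z$ is $\pm 1$.

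Assuming toward a contradiction that two of $x,y,z$ have opposite signs, I would treat a representative case, say $x>0>y$ with $z\in\{-1,0,1\}$; the others are analogous. Since $\{3,5,7\}$ is independent, $\mutation_3\mutation_5\mutation_7$ is a well-defined composition of commuting mutations, and applying it (and a preliminary $\mutation_1$ if needed), exactly in the spirit of the mutation trick in the proof of Lemma~\ref{lemma:no monogons in affine E6 with Z/3Z-action}, either merges the parallel arm-edges into a multiple edge or creates a multiple edge joining two vertices of a common orbit; then some four-vertex restriction of the resulting quiver -- which is mutation equivalent to $\quiver$ -- is a complete $4$-graph carrying a cyclic triangle of type different from $(1,1,2)$, hence mutation-infinite by Corollary~\ref{corollary:complete 4-graph}, or is itself a mutation-infinite linear quiver or triangle, contradicting mutation-finiteness of $\exdynE_6$. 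In the remaining configurations all arrow multiplicities are $1$; there the weights $a=b_{1,2}=b_{1,4}=b_{1,6}$ and $b=b_{1,3}=b_{1,5}=b_{1,7}$ are also pinned to $\{-1,0,1\}$ by the triangles $\quiver|_{\{1,2,3\}}$, $\quiver|_{\{1,2,5\}}$, $\quiver|_{\{1,2,7\}}$, leaving a short finite list of quivers, and for each I would exhibit a sequence of mutations that yields a cyclic triangle of type $(1,2,2)$, as in the last step of the proof of Lemma~\ref{lemma:no monogons in affine E6 with Z/3Z-action}.

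The hard part will be this final regime, where every arrow has multiplicity $1$: no single four-vertex restriction need then be mutation-infinite, so one cannot conclude locally and must follow a $\tau$-equivariant sequence of mutations through the whole $7$-vertex quiver, using vertex $1$, until a forbidden triangle or complete $4$-graph surfaces; keeping the case analysis finite and transparent (or, where possible, short-cutting it by passing via Lemma~\ref{lemma_on_facets} to a finite-type restriction with no directed $4$-cycle) is where the real work lies. Once the claim on $x,y,z$ is proved, Lemma~\ref{lemma:no bigons in affine E6 with Z/3Z-action} follows, and together with Lemma~\ref{lemma:no monogons in affine E6 with Z/3Z-action} it yields conditions~\ref{def_admissible_3} and \ref{def_admissible_4} of Definition~\ref{definition:admissible quiver}(2), proving Theorem~\ref{thm_invaraint_implies_admissible} for $(\dynX,G,\dynY)=(\exdynE_6,\Z/3\Z,\exdynG_2)$.
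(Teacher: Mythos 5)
Your reduction of the lemma to the single claim that no two of $x=b_{2,3}$, $y=b_{2,5}$, $z=b_{2,7}$ have opposite signs is correct and matches the paper's setup exactly, as does pinning every nonzero entry among $x,y,z$ to $\pm1$ via the linear quivers $\quiver|_{\{2,3,5\}}$, $\quiver|_{\{2,3,7\}}$, $\quiver|_{\{2,5,7\}}$. But from that point on the proposal is an outline, not a proof, and you say so yourself: the entire content of the paper's argument is precisely the explicit case analysis you defer to "where the real work lies." Concretely, the paper (i) shows that if $b_{2,7}\neq0$ then $\quiver|_{[7]\setminus\{1\}}$ reduces via $\mutation_3\mutation_4$ to $\quiver(2\exdynA_1)$, forcing $b_{2,7}=0$; (ii) bounds $|b_{1,j}|\le1$ via the linear quiver $\quiver|_{\{1,j,\tau(j)\}}$ and disposes of the sign pattern $b_{1,2}b_{1,3}<0$ by identifying it with the quiver already killed at the end of Lemma~\ref{lemma:no monogons in affine E6 with Z/3Z-action}; and (iii) exhibits explicit mutation sequences reducing the two surviving $7$-vertex quivers to mutation-infinite subquivers. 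None of these steps is carried out in your proposal, and steps (i) and (iii) are not routine: they require finding the right sequences, which is the substance of the lemma.

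There is also a genuine obstacle to the toolkit you propose for the main line. You plan to conclude everywhere by exhibiting a mutation-infinite restriction (complete $4$-graph with a bad triangle, bad linear quiver, or bad triangle). But in the sub-case $z\neq0$ the paper's contradiction is \emph{not} mutation-infiniteness: the reduction lands on $\quiver(2\exdynA_1)$, which is mutation-finite, and the contradiction comes from Corollary~\ref{corollary:seed restriction}(2), which in turn rests on Lemma~\ref{lemma_on_facets} and the finite-type classification. Your parenthetical nod to Lemma~\ref{lemma_on_facets} ("a finite-type restriction with no directed $4$-cycle") is the type-$\exdynA$ variant and does not cover this; without explicitly invoking the $\quiver(2\exdynA_1)$ obstruction, the case $z\neq0$ cannot be closed by your stated criteria. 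So the proposal has the right skeleton but a real gap where the proof actually lives.
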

\begin{proof}
For $i=1$ or $j=1$, there is nothing to prove.
Assume on the contrary that $b_{i,j}b_{i,\tau(j)}<0$ for some $i,j\ge 2$. By 
relabeling if necessary, we may assume that
\[
b_{2,3}<0<b_{2,5}.
\]
Since there is no edges between $3$ and $5$ by Lemma~\ref{lemma:no monogons in 
affine E6 with Z/3Z-action}, the restriction $\quiver|_{\{2,3,5\}}$ is 
mutation-finite if and only if $|b_{2,3}|=b_{2,5}=1$.

Suppose that $b_{2,7}\neq 0$. Then by considering the restriction 
$\quiver|_{\{2,5,7\}}$ or $\quiver|_{\{2,3,7\}}$, we have $|b_{2,7}|=1$. Up to 
relabelling, we may assume that $b_{7,2}=1$. Then the restriction 
$\quiver|_{[7]\setminus\{1\}}=\quiver|_{\{2,3,4,5,6,7\}}$ is reduced to 
$2\exdynA_1$ as follows: 
\[
\quiver|_{[7]\setminus\{1\}}=\begin{tikzpicture}[baseline=-.5ex]
\draw[fill] (210:1) node (A3) {} circle (2pt) (90:1) node (A2) {} circle (2pt) 
(-30:1) node (A4) {} circle (2pt) (150:1) node (A5) {} circle (2pt) (270:1) 
node (A6) {} circle (2pt) (30:1) node (A7) {} circle (2pt);
\draw (A5) node[above left] {$3$} (A6) node[below] {$5$} (A7) node[above right] 
{$7$};
\draw (A2) node[above] {$2$} (A3) node[below left] {$4$} (A4) node[below right] 
{$6$};
\draw[->] (A2) -- (A5);
\draw[->] (A3) -- (A6);
\draw[->] (A4) -- (A7);
\draw[->] (A5) -- (A3);
\draw[->] (A6) -- (A4);
\draw[->] (A7) -- (A2);
\draw[->] (A2) to[out=-120,in=120] (A6);
\draw[->] (A3) to[out=0,in=240] (A7);
\draw[->] (A4) to[out=120,in=0] (A5);
\begin{scope}[xshift=4cm]
\draw[fill] (210:1) node (A3) {} circle (2pt) (90:1) node (A2) {} circle (2pt) 
(-30:1) node (A4) {} circle (2pt) (150:1) node (A5) {} circle (2pt) (270:1) 
node (A6) {} circle (2pt) (30:1) node (A7) {} circle (2pt);
\draw (A5) node[above left] {$3$} (A6) node[below] {$5$} (A7) node[above right] 
{$7$};
\draw (A2) node[above] {$2$} (A3) node[below left] {$4$} (A4) node[below right] 
{$6$};
\draw[->] (A5) -- (A2);
\draw[->] (A5) -- (A3);
\draw[->] (A5) -- (A4);
\draw[->] (A6) -- (A5);
\draw[->] (A7) -- (A5);
\draw[-Implies,double distance=2pt] (A2) -- (A6);
\draw[-Implies,double distance=2pt] (A4) -- (A7);
\end{scope}
\draw[->] (1.5,0) -- (2.5,0) node[midway,above] {$\mutation_3\mutation_4$};
\draw (6,0) node {$\succ$};
\begin{scope}[xshift=8cm]
\draw[fill] (90:1) node (A2) {} circle (2pt) (-30:1) node (A4) {} circle (2pt) 
(270:1) node (A6) {} circle (2pt) (30:1) node (A7) {} circle (2pt);
\begin{scope}[lightgray]
\draw[fill] (210:1) node (A3) {} circle (2pt) (150:1) node (A5) {} circle (2pt);
\draw (A5) node[above left] {$3$} (A6) node[below] {$5$} (A7) node[above right] 
{$7$};
\draw (A2) node[above] {$2$} (A3) node[below left] {$4$} (A4) node[below right] 
{$6$};
\draw[->] (A5) -- (A2);
\draw[->] (A5) -- (A3);
\draw[->] (A5) -- (A4);
\draw[->] (A6) -- (A5);
\draw[->] (A7) -- (A5);
\end{scope}
\draw  (A6) node[below] {$5$} (A7) node[above right] {$7$};
\draw (A2) node[above] {$2$} (A4) node[below right] {$6$};
\draw[-Implies,double distance=2pt] (A2) -- (A6);
\draw[-Implies,double distance=2pt] (A4) -- (A7);
\end{scope}
\end{tikzpicture}
\]
However, by Corollary~\ref{corollary:seed restriction}, $\quiver$ can not be 
reduced to $2\exdynA_1$. Therefore, we obtain $b_{2,7}=0$.

On the other hand, we have $|b_{1,j}|\le 1$, otherwise, the restriction 
$\quiver|_{\{1,j,\tau(j)\}}$ is mutation-infinite. If $b_{1,2} b_{1,3}<0$, then 
the quiver $\quiver$ is the same as the last quiver in the proof of 
Lemma~\ref{lemma:no monogons in affine E6 with Z/3Z-action}. Hence, up to the 
mutation $\mutation_1$, we may assume that $b_{1,i}\ge 0$ and so the quiver $\quiver$ 
is one of the following, both are reduced to mutation-infinite quivers:
\begin{align*}
&\begin{tikzpicture}[baseline=-.5ex]
\draw[fill] (0,0) node (A1) {} circle (2pt) (210:1) node (A3) {} circle (2pt) 
(90:1) node (A2) {} circle (2pt) (-30:1) node (A4) {} circle (2pt) (150:1) node 
(A5) {} circle (2pt) (270:1) node (A6) {} circle (2pt) (30:1) node (A7) {} 
circle (2pt);
\draw (A1) node[above right] {$1$};
\draw (A5) node[above left] {$3$} (A6) node[below] {$5$} (A7) node[above right] 
{$7$};
\draw (A2) node[above] {$2$} (A3) node[below left] {$4$} (A4) node[below right] 
{$6$};
\draw[->] (A1) -- (A2);
\draw[->] (A1) -- (A3);
\draw[->] (A1) -- (A4);
\draw[->] (A1) -- (A5);
\draw[->] (A1) -- (A6);
\draw[->] (A1) -- (A7);
\draw[->] (A2) -- (A5);
\draw[->] (A3) -- (A6);
\draw[->] (A4) -- (A7);
\draw[->] (A5) -- (A3);
\draw[->] (A6) -- (A4);
\draw[->] (A7) -- (A2);
\begin{scope}[xshift=4cm]
\draw[fill] (0,0) node (A1) {} circle (2pt) (210:1) node (A3) {} circle (2pt) 
(90:1) node (A2) {} circle (2pt) (-30:1) node (A4) {} circle (2pt) (150:1) node 
(A5) {} circle (2pt) (270:1) node (A6) {} circle (2pt) (30:1) node (A7) {} 
circle (2pt);
\draw (A1) node[above right] {$1$}; 
\draw (A5) node[above left] {$3$} (A6) node[below] {$5$} (A7) node[above right] 
{$7$};
\draw (A2) node[above] {$2$} (A3) node[below left] {$4$} (A4) node[below right] 
{$6$};
\draw[->] (A2) -- (A1);
\draw[->] (A1) -- (A3);
\draw[->] (A1) -- (A4);
\draw[-Implies,double distance=2pt] (A1) -- (A5);
\draw[->] (A1) -- (A6);
\draw[->] (A1) -- (A7);
\draw[->] (A5) -- (A2);
\draw[->] (A3) -- (A6);
\draw[->] (A4) -- (A7);
\draw[->] (A5) -- (A3);
\draw[->] (A6) -- (A4);
\draw[->] (A2) -- (A7);
\draw[->] (A7) -- (A5);
\end{scope}
\draw[->] (1.5,0) -- (2.5,0) node[midway,above] {$\mutation_2$};
\draw (5.5,0) node {$\succ$};
\begin{scope}[xshift=7cm]
\draw[fill] (0,0) node (A1) {} circle (2pt) (210:1) node (A3) {} circle (2pt) 
(150:1) node (A5) {} circle (2pt);
\begin{scope}[lightgray]
\draw[fill] (90:1) node (A2) {} circle (2pt) (-30:1) node (A4) {} circle (2pt) 
(270:1) node (A6) {} circle (2pt) (30:1) node (A7) {} circle (2pt);
\draw (A6) node[below] {$5$} (A7) node[above right] {$7$};
\draw (A2) node[above] {$2$} (A4) node[below right] {$6$};
\draw[->] (A2) -- (A1);
\draw[->] (A1) -- (A4);
\draw[->] (A1) -- (A6);
\draw[->] (A1) -- (A7);
\draw[->] (A5) -- (A2);
\draw[->] (A3) -- (A6);
\draw[->] (A4) -- (A7);
\draw[->] (A6) -- (A4);
\draw[->] (A2) -- (A7);
\draw[->] (A7) -- (A5);
\end{scope}
\draw (A1) node[right] {$1$} ;
\draw (A5) node[above left] {$3$} (A3) node[below left] {$4$};
\draw[->] (A1) -- (A3);
\draw[->] (A5) -- (A3);
\draw[-Implies,double distance=2pt] (A1) -- (A5);
\end{scope}
\end{tikzpicture}\\
&\begin{tikzpicture}[baseline=-.5ex]
\draw[fill] (0,0) node (A1) {} circle (2pt) (210:1) node (A3) {} circle (2pt) 
(90:1) node (A2) {} circle (2pt) (-30:1) node (A4) {} circle (2pt) (150:1) node 
(A5) {} circle (2pt) (270:1) node (A6) {} circle (2pt) (30:1) node (A7) {} 
circle (2pt);
\draw (A1) node[above right] {$1$}; 
\draw (A5) node[above left] {$3$} (A6) node[below] {$5$} (A7) node[above right] 
{$7$};
\draw (A2) node[above] {$2$} (A3) node[below left] {$4$} (A4) node[below right] 
{$6$};
\draw[->] (A1) -- (A2);
\draw[->] (A1) -- (A3);
\draw[->] (A1) -- (A4);
\draw[->] (A2) -- (A5);
\draw[->] (A3) -- (A6);
\draw[->] (A4) -- (A7);
\draw[->] (A5) -- (A3);
\draw[->] (A6) -- (A4);
\draw[->] (A7) -- (A2);
\begin{scope}[xshift=4cm]
\draw[fill] (0,0) node (A1) {} circle (2pt) (210:1) node (A3) {} circle (2pt) 
(90:1) node (A2) {} circle (2pt) (-30:1) node (A4) {} circle (2pt) (150:1) node 
(A5) {} circle (2pt) (270:1) node (A6) {} circle (2pt) (30:1) node (A7) {} 
circle (2pt);
\draw (A1) node[above right] {$1$}; 
\draw (A5) node[above left] {$3$} (A6) node[below] {$5$} (A7) node[above right] 
{$7$};
\draw (A2) node[above] {$2$} (A3) node[below left] {$4$} (A4) node[below right] 
{$6$};
\draw[->] (A1) -- (A2);
\draw[->] (A1) -- (A3);
\draw[->] (A1) -- (A4);
\draw[->] (A2) -- (A3);
\draw[->] (A3) -- (A4);
\draw[->] (A4) -- (A2);
\draw[->] (A2) -- (A7);
\draw[->] (A7) -- (A4);
\draw[->] (A4) -- (A6);
\draw[->] (A6) -- (A3);
\draw[->] (A3) -- (A5);
\draw[->] (A5) -- (A2);
\end{scope}
\draw[->] (1.5,0) -- (2.5,0) node[midway,above] {$\mutation_4\mutation_3\mutation_7$};
\draw (5.5,0) node {$\succ$};
\begin{scope}[xshift=7cm]
\draw[fill] (0,0) node (A1) {} circle (2pt) (210:1) node (A3) {} circle (2pt) 
(90:1) node (A2) {} circle (2pt) (-30:1) node (A4) {} circle (2pt);
\begin{scope}[lightgray]
\draw[fill] (150:1) node (A5) {} circle (2pt) (270:1) node (A6) {} circle (2pt) 
(30:1) node (A7) {} circle (2pt);
\draw (A5) node[above left] {$3$} (A6) node[below] {$5$} (A7) node[above right] 
{$7$};
\draw[->] (A2) -- (A7);
\draw[->] (A7) -- (A4);
\draw[->] (A4) -- (A6);
\draw[->] (A6) -- (A3);
\draw[->] (A3) -- (A5);
\draw[->] (A5) -- (A2);
\end{scope}
\draw (A1) node[above right] {$1$} (A2) node[above] {$2$} (A3) node[below left] 
{$4$} (A4) node[below right] {$6$};
\draw[->] (A1) -- (A2);
\draw[->] (A1) -- (A3);
\draw[->] (A1) -- (A4);
\draw[->] (A2) -- (A3);
\draw[->] (A3) -- (A4);
\draw[->] (A4) -- (A2);
\end{scope}
\end{tikzpicture}
\end{align*}
This yields a contradiction and we are done.
\end{proof}

\subsubsection{\texorpdfstring{$(\dynX, G,\dynY) = (\exdynE_6, \Z/2\Z, \dynE_6^{(2)})$}{(X, G, Y)=(affine E6, Z/2Z, affine E6 2)}}
\label{section_E6_E6_2}

Let $\quiver$ be a $\Z/2\Z$-invariant quiver on $[7]$ of type $\exdynE_6$ and $\qbasis=\qbasis(\quiver)$. 
See~\ref{fig_E6_E6_2} in 
Appendix~\ref{appendix_actions_on_Dynkin_diagrams}.
To show the admissibility, it is enough to check the 
condition~\ref{def_admissible_4} because of 
Lemma~\ref{lemma:condition_3_holds_of_order_2}. 

\begin{lemma}\label{lemma:no bigons in affine E6 with Z/2Z-action}
For any $i,j\in[7]$,
\[
b_{i,j}b_{i,\tau(j)}\ge 0.
\]
\end{lemma}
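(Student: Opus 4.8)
The plan is to follow the scheme of the proof of Lemma~\ref{lemma:no bigons in affine A}: reduce the inequality to a single scalar condition, and then derive a contradiction from its failure using Lemma~\ref{lemma_on_facets} together with the restriction lemmas of Section~\ref{section:proof of admissibility}. First, the inequality $b_{i,j}b_{i,\tau(j)}\ge0$ holds automatically whenever $i$ or $j$ is fixed by $\tau$: if $\tau(j)=j$ it reads $b_{i,j}^2\ge0$, while if $\tau(i)=i$ then $\Z/2\Z$-invariance gives $b_{i,j}=b_{\tau(i),\tau(j)}=b_{i,\tau(j)}$, again a square. With the action as in~\ref{fig_E6_E6_2}, write $1$ for the branch vertex, $1{-}2{-}3$ for the arm fixed by $\tau$, and $4\leftrightarrow 6$, $5\leftrightarrow 7$ for the interchanged arms $1{-}4{-}5$ and $1{-}6{-}7$. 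Then one is left to treat $i,j\in\{4,5,6,7\}$, and using $b_{i,\tau(i)}=0$ (Lemma~\ref{lemma:condition_3_holds_of_order_2}) and invariance once more, every surviving instance is equivalent to the single inequality $b_{4,5}\,b_{4,7}\ge0$.

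Second, I would assume for contradiction that $b_{4,5}b_{4,7}<0$. Combined with invariance and $b_{4,6}=b_{5,7}=0$, this forces $4,5,6,7$ to be distinct and $\quiver|_{\{4,5,6,7\}}$ to be a $\tau$-invariant directed $4$-cycle without diagonals, say $4\to5\to6\to7\to4$, a priori with multiplicities. If some arrow of the cycle is multiple, mutating at a suitable vertex of the cycle produces a mutation-infinite cyclic triangle (a triangle of type $(pq,p,q)$ or $(q,q,1)$ with $p$ or $q\ge 2$), contradicting that a quiver of type $\exdynE_6$ is mutation-finite by Theorem~\ref{thm_finite_type_classification}(2); hence $\quiver|_{\{4,5,6,7\}}$ is a \emph{simple} directed $4$-cycle.

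Third, I would apply Lemma~\ref{lemma_on_facets} to obtain $\ell\in[7]$ with $\quiver$ reached from $\quiver(\exdynE_6)$ by mutations on $[7]\setminus\{\ell\}$; conjugating that mutation sequence by the diagram automorphism $\tau$ and using invariance also gives such a sequence missing $\tau(\ell)$. According as $\ell$ is the branch vertex, a middle vertex, or an endpoint, $\quiver|_{[7]\setminus\{\ell\}}$ is mutation equivalent to $\dynA_2\sqcup\dynA_2\sqcup\dynA_2$, to $\dynA_5\sqcup\dynA_1$, or to $\dynE_6$. If $\ell$ is the branch vertex, $\quiver|_{[7]\setminus\{\ell\}}$ has no connected full subquiver on four vertices, contradicting that $\quiver|_{\{4,5,6,7\}}$ is connected. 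If $\ell$ is a middle vertex not in $\{4,5,6,7\}$, the $4$-cycle sits inside the type-$\dynA_5$ component, which is impossible by~\cite[Proposition~2.4]{BuanVatne08}. If $\{\ell,\tau(\ell)\}=\{4,6\}$, then deleting the middle vertex $4$ isolates the endpoint $5$ in $\quiver(\exdynE_6)|_{[7]\setminus\{4\}}$, so $5$ is a singleton component of $\quiver|_{[7]\setminus\{4\}}$, contradicting $b_{5,6}\ne0$.

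The surviving cases $\ell=3$ and $\{\ell,\tau(\ell)\}=\{5,7\}$---the ``endpoint'' cases, where the relevant restriction of $\quiver(\exdynE_6)$ is the connected diagram $\dynE_6$ and nothing becomes isolated---are where the real work lies, and this is the step I expect to be the main obstacle. I would treat them exactly as in the $\Z/3\Z$ case (Lemmas~\ref{lemma:no monogons in affine E6 with Z/3Z-action} and~\ref{lemma:no bigons in affine E6 with Z/3Z-action}): record the invariance constraints $b_{1,4}=b_{1,6}$, $b_{1,5}=b_{1,7}$, $b_{2,4}=b_{2,6}$, $b_{2,5}=b_{2,7}$, $b_{3,4}=b_{3,6}$, $b_{3,5}=b_{3,7}$, bound all of these by $1$ in absolute value (otherwise a three-vertex restriction is already mutation-infinite), and, according to which of them are nonzero, carry out a short explicit sequence of mutations---typically $\mutation_1$ followed by mutations among $\{4,5,6,7\}$---to reduce $\quiver$ either to a mutation-infinite cyclic or acyclic triangle or to $\quiver(2\exdynA_1)$, the latter being excluded by Corollary~\ref{corollary:seed restriction}(2). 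Equivalently, one may finish these cases by the fact, checkable by enumerating the finite mutation class, that no quiver of type $\dynE_6$ contains a directed $4$-cycle as a full subquiver. The delicate, computation-heavy portion of the whole argument is precisely this enumeration around the type-$\dynE_6$ restrictions.
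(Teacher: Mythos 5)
Your opening reductions are sound and essentially match the paper's: the inequality is automatic when $i$ or $j$ is $\tau$-fixed, a failure forces $\quiver|_{\{4,5,6,7\}}$ to be a $\tau$-invariant directed $4$-cycle with no diagonals, and mutation-finiteness forces all its arrows to be simple. The genuine gap is exactly where you flag ``the real work.'' Your fallback fact --- that no quiver of type $\dynE_6$ contains a directed $4$-cycle as a full subquiver --- is false: by the paper's own remark in Section~\ref{section_preliminaries} (citing Vatne, Type~IV), a directed $4$-cycle is a quiver of type $\dynD_4$; the $\dynD_4$ star is a full subdiagram of the $\dynE_6$ diagram; so Lemma~\ref{lemma:induced subquiver} produces a quiver of type $\dynE_6$ whose restriction to those four vertices is precisely the directed $4$-cycle. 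Hence the cases $\ell\in\{3,5,7\}$ cannot be closed by this enumeration claim, and for $\ell\in\{5,7\}$ the situation is worse still: the $4$-cycle is not even contained in $[7]\setminus\{\ell\}$, so knowing that $\quiver|_{[7]\setminus\{\ell\}}$ is of type $\dynE_6$ gives you nothing to contradict. The unexecuted ``short explicit sequence of mutations'' is therefore not a routine verification but the entire content of the lemma.

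The paper does not route through Lemma~\ref{lemma_on_facets} here at all. Instead it uses connectivity of $\quiver$ to attach a $\tau$-fixed vertex to the $4$-cycle, observes that $\Z/2\Z$-invariance (together with mutation-finiteness of three-vertex restrictions) leaves only the three five-vertex configurations of~\eqref{equation:squares}, and reduces each by explicit mutations to a mutation-infinite quiver or to $\quiver(2\exdynA_1)$, the latter excluded by Corollary~\ref{corollary:seed restriction}. Your facet-lemma case analysis correctly eliminates $\ell\in\{1,2,4,6\}$ (the isolated-vertex and type-$\dynA$ arguments are fine), but it cannot touch the remaining cases; to complete the proof you would have to carry out the five-vertex reductions, i.e., essentially reproduce the paper's argument. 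As written, the proposal is not a proof.
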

\begin{proof}
If $i\le 3$ or $j\le 3$, then $b_{i,j}b_{i,\tau(j)}=b_{i,j}^2\ge 0$ since 
$\tau(i)=i$ or $\tau(j)=j$.

Suppose that $b_{i,j}b_{i,\tau(j)}<0$ for some $i,j\ge 4$. Then the only 
possibility is up to relabelling,
\[
b_{7,6}=b_{5,4}<0<b_{5,6}=b_{7,4}.
\]
As seen in the previous lemma, $b_{4,6}=0$ and therefore by considering the 
restriction $\quiver|_{\{4,5,6\}}$, we have 
$b_{4,5}=b_{5,6}=b_{6,7}=b_{7,4}=1$. Therefore $\quiver|_{\{4,5,6,7\}}$ is a 
cyclic graph as follows:
\begin{align*}
\quiver|_{\{4,5,6,7\}}&=
\begin{tikzpicture}[baseline=.5ex]
\draw[fill] (0,1) node (A4) {} circle (2pt)
(-1,0) node (A5) {} circle (2pt) 
(0,-1) node (A6) {} circle (2pt) 
(1,0) node (A7) {} circle (2pt);
\draw (A4) node[above] {$4$};
\draw (A5) node[left] {$5$};
\draw (A6) node[below] {$6$};
\draw (A7) node[right] {$7$};
\draw[->] (A4) -- (A5);
\draw[->] (A5) -- (A6);
\draw[->] (A6) -- (A7);
\draw[->] (A7) -- (A4);
\end{tikzpicture}
\end{align*}

Since $\quiver$ is connected, $b_{i,j}\neq 0$ for some $i\le 3<j$. Let us 
assume that $b_{1,j}\neq0$. Then by $\Z/2\Z$-invariance, the restriction 
$\quiver|_{\{1,4,5,6,7\}}$ is one of the following: up to relabelling and 
mutation $\mutation_1$,
\begin{align}\label{equation:squares}
\begin{tikzpicture}[baseline=-.5ex]
\begin{scope}
\draw[fill] (0,0) node (A1) {} circle (2pt)
(0,1) node (A4) {} circle (2pt)
(-1,0) node (A5) {} circle (2pt) 
(0,-1) node (A6) {} circle (2pt) 
(1,0) node (A7) {} circle (2pt);
\draw[->] (A1) node[above right] {$1$} -- (A5);
\draw[->] (A1) -- (A7);
\draw[->] (A4) -- (A5) node[left] {$5$};
\draw[->] (A5) -- (A6) node[below] {$6$};
\draw[->] (A6) -- (A7) node[right] {$7$};
\draw[->] (A7) -- (A4) node[above] {$4$};
\end{scope}
\begin{scope}[xshift=4cm]
\draw[fill] (0,0) node (A1) {} circle (2pt)
(0,1) node (A4) {} circle (2pt)
(-1,0) node (A5) {} circle (2pt) 
(0,-1) node (A6) {} circle (2pt) 
(1,0) node (A7) {} circle (2pt);
\draw[->] (A1) node[above right] {$1$} -- (A4);
\draw[->] (A1) -- (A6);
\draw[->] (A5) -- (A1);
\draw[->] (A7) -- (A1);
\draw[->] (A4) -- (A5) node[left] {$5$};
\draw[->] (A5) -- (A6) node[below] {$6$};
\draw[->] (A6) -- (A7) node[right] {$7$};
\draw[->] (A7) -- (A4) node[above] {$4$};
\end{scope}
\begin{scope}[xshift=8cm]
\draw[fill] (0,0) node (A1) {} circle (2pt)
(0,1) node (A4) {} circle (2pt)
(-1,0) node (A5) {} circle (2pt) 
(0,-1) node (A6) {} circle (2pt) 
(1,0) node (A7) {} circle (2pt);
\draw[->] (A1) node[above right] {$1$} -- (A4);
\draw[->] (A1) -- (A5);
\draw[->] (A1) -- (A6);
\draw[->] (A1) -- (A7);
\draw[->] (A4) -- (A5) node[left] {$5$};
\draw[->] (A5) -- (A6) node[below] {$6$};
\draw[->] (A6) -- (A7) node[right] {$7$};
\draw[->] (A7) -- (A4) node[above] {$4$};
\end{scope}
\end{tikzpicture}
\end{align}
Then indeed, first two quivers are mutation equivalent via 
$\mutation_5\mutation_7\mutation_4\mutation_6$.
\begin{align}\label{equation:square with one diagonal}
\begin{tikzpicture}[baseline=-.5ex]
\begin{scope}
\draw[fill] (0,0) node (A1) {} circle (2pt)
(0,1) node (A4) {} circle (2pt)
(-1,0) node (A5) {} circle (2pt) 
(0,-1) node (A6) {} circle (2pt) 
(1,0) node (A7) {} circle (2pt);
\draw[->] (A1) node[above right] {$1$} -- (A5);
\draw[->] (A1) -- (A7);
\draw[->] (A4) -- (A5) node[left] {$5$};
\draw[->] (A5) -- (A6) node[below] {$6$};
\draw[->] (A6) -- (A7) node[right] {$7$};
\draw[->] (A7) -- (A4) node[above] {$4$};
\end{scope}
\begin{scope}[xshift=6cm]
\draw[fill] (0,0) node (A1) {} circle (2pt)
(0,1) node (A4) {} circle (2pt)
(-1,0) node (A5) {} circle (2pt) 
(0,-1) node (A6) {} circle (2pt) 
(1,0) node (A7) {} circle (2pt);
\draw[->] (A1) node[above right] {$1$} -- (A4);
\draw[->] (A1) -- (A6);
\draw[->] (A5) -- (A1);
\draw[->] (A7) -- (A1);
\draw[->] (A4) -- (A5) node[left] {$5$};
\draw[->] (A5) -- (A6) node[below] {$6$};
\draw[->] (A6) -- (A7) node[right] {$7$};
\draw[->] (A7) -- (A4) node[above] {$4$};
\end{scope}
\draw[->] (2,0) -- (4,0) node[midway, above] {$\mutation_5\mutation_7\mutation_4\mutation_6$}; 
\end{tikzpicture}
\end{align}

Finally, the last two out of the above three can be reduced as follows:
\begin{align}
&\begin{tikzpicture}[baseline=-.5ex]
\begin{scope}
\draw[fill] (0,0) node (A1) {} circle (2pt)
(0,1) node (A4) {} circle (2pt)
(-1,0) node (A5) {} circle (2pt) 
(0,-1) node (A6) {} circle (2pt) 
(1,0) node (A7) {} circle (2pt);
\draw[->] (A1) node[above right] {$1$} -- (A4);
\draw[->] (A1) -- (A6) node[below] {$6$};
\draw[->] (A5) -- (A1);
\draw[->] (A7) -- (A1);
\draw[->] (A4) -- (A5) node[left] {$5$};
\draw[->] (A5) -- (A6);
\draw[->] (A6) -- (A7) node[right] {$7$};
\draw[->] (A7) -- (A4) node[above] {$4$};
\end{scope}
\begin{scope}[xshift=5cm]
\draw[fill] (0,0) node (A1) {} circle (2pt)
(0,1) node (A4) {} circle (2pt)
(-1,0) node (A5) {} circle (2pt) 
(0,-1) node (A6) {} circle (2pt) 
(1,0) node (A7) {} circle (2pt);
\draw[->] (A4) node[above] {$4$} -- (A1) node[above right] {$1$};
\draw[->] (A1) -- (A5) node[left] {$5$};
\draw[->] (A6) node[below] {$6$} -- (A1);
\draw[->] (A1) -- (A7) node[right] {$7$};
\draw[-Implies,double distance=2pt] (A5) -- (A6);
\draw[-Implies,double distance=2pt] (A7) -- (A4);
\end{scope}
\draw[->] (2,0) -- (3,0) node[midway, above] {$\mutation_1$}; 
\draw (7,0) node {$\succ$};
\begin{scope}[xshift=9cm]
\draw[fill] (-1,0) node (A5) {} circle (2pt) (0,1) node (A4) {} circle (2pt) 
(0,-1) node (A6) {} circle (2pt) (1,0) node (A7) {} circle (2pt);
\begin{scope}[lightgray]
\draw[fill] (0,0) node (A1) {} circle (2pt);
\draw[->] (A4) -- (A1) node[above right] {$1$};
\draw[->] (A1) -- (A5);
\draw[->] (A6) -- (A1);
\draw[->] (A1) -- (A7);
\end{scope}
\draw[-Implies,double distance=2pt] (A5) node[left] {$5$} -- (A6) node[below] 
{$6$};
\draw[-Implies,double distance=2pt] (A7) node[right] {$7$} -- (A4) node[above] 
{$4$};
\end{scope}
\end{tikzpicture}\label{equation:square with two diagonals}
\\
&\begin{tikzpicture}[baseline=-.5ex]
\begin{scope}
\draw[fill] (0,0) node (A1) {} circle (2pt)
(0,1) node (A4) {} circle (2pt)
(-1,0) node (A5) {} circle (2pt) 
(0,-1) node (A6) {} circle (2pt) 
(1,0) node (A7) {} circle (2pt);
\draw[->] (A1) node[above right] {$1$} -- (A4) node[above] {$4$};
\draw[->] (A1) -- (A5);
\draw[->] (A1) -- (A6);
\draw[->] (A1) -- (A7);
\draw[->] (A4) -- (A5) node[left] {$5$};
\draw[->] (A5) -- (A6) node[below] {$6$};
\draw[->] (A6) -- (A7) node[right] {$7$};
\draw[->] (A7) -- (A4);
\end{scope}
\begin{scope}[xshift=5cm]
\draw[fill] (0,0) node (A1) {} circle (2pt)
(0,1) node (A4) {} circle (2pt)
(-1,0) node (A5) {} circle (2pt) 
(0,-1) node (A6) {} circle (2pt) 
(1,0) node (A7) {} circle (2pt);
\draw[-Implies,double distance=2pt] (A1) -- (A4);
\draw[->] (A5) -- (A1) node[above right] {$1$};
\draw[-Implies,double distance=2pt] (A1) -- (A6);
\draw[->] (A7) -- (A1);
\draw[->] (A5) -- (A4) node[above] {$4$};
\draw[->] (A6) -- (A5) node[left] {$5$};
\draw[->] (A7) -- (A6) node[below] {$6$};
\draw[->] (A4) -- (A7) node[right] {$7$};
\end{scope}
\draw[->] (2,0) -- (3,0) node[midway, above] {$\mutation_5\mutation_7$}; 
\draw (7,0) node {$\succ$};
\begin{scope}[xshift=9cm]
\draw[fill] (0,0) node (A1) {} circle (2pt) (0,1) node (A4) {} circle (2pt) 
(0,-1) node (A6) {} circle (2pt);
\begin{scope}[lightgray]
\draw[fill] (-1,0) node (A5) {} circle (2pt) (1,0) node (A7) {} circle (2pt);
\draw[->] (A5) -- (A1);
\draw[->] (A7) -- (A1);
\draw[->] (A5) -- (A4);
\draw[->] (A6) -- (A5) node[left] {$5$};
\draw[->] (A7) -- (A6);
\draw[->] (A4) -- (A7) node[right] {$7$};
\end{scope}
\draw[-Implies,double distance=2pt] (A1) -- (A4) node[above] {$4$};
\draw[-Implies,double distance=2pt] (A1) node[above right] {$1$} -- (A6) 
node[below] {$6$};
\end{scope}
\end{tikzpicture}\label{equation:square with two diagonals 2}
\end{align}
This yields a contradiction so we are done.
\end{proof}

\subsubsection{\texorpdfstring{$(\dynX,G,\dynY)=(\exdynE_7, \Z/2\Z, \exdynF_4)$}{(X, G, Y)=(affine E7, Z/2Z, affine F4)}}
\label{section_E7_F4}

Let $\quiver$ be a $\Z/2\Z$-invariant quiver on $[8]$ of type $\exdynE_7$ and $\qbasis=\qbasis(\quiver)$.  
See~\ref{fig_E7_F4} in Appendix~\ref{appendix_actions_on_Dynkin_diagrams}. 
To show the admissibility, it is enough to check the 
condition~\ref{def_admissible_4} because of 
Lemma~\ref{lemma:condition_3_holds_of_order_2}. 

\begin{lemma}\label{lemma:no bigons in affine E7 with Z/2Z-action}
For any $i,j\in[8]$,
\[
b_{i,j}b_{i,\tau(j)}\ge 0.
\]
\end{lemma}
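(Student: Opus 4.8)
The plan is to argue by contradiction, in parallel with the two $\exdynE_6$ cases, using that a quiver of type $\exdynE_7$ is mutation-finite and of standard affine type. Write $\tau$ for the generator of $G=\Z/2\Z$; on $\exdynE_7$ it fixes the two central vertices and exchanges the three pairs of remaining vertices. Condition~\ref{def_admissible_3} of admissibility is already supplied by Lemma~\ref{lemma:condition_3_holds_of_order_2}, so it remains only to prove the stated inequality, which is condition~\ref{def_admissible_4}. If $i$ or $j$ is fixed by $\tau$, then $\Z/2\Z$-invariance gives $b_{i,j}=b_{i,\tau(j)}$, hence $b_{i,j}b_{i,\tau(j)}=b_{i,j}^2\ge 0$; so we may assume that $i$ and $j$ both lie in non-trivial $\tau$-orbits and, after relabeling, that $b_{i,j}<0<b_{i,\tau(j)}$.

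First I would pin down the local picture on $\{i,j,\tau(i),\tau(j)\}$. These four indices are pairwise distinct: if, say, $j=\tau(i)$, then $b_{i,j}=b_{i,\tau(i)}=0$ by Lemma~\ref{lemma:condition_3_holds_of_order_2}, contrary to $b_{i,j}<0$, and the remaining coincidences are ruled out the same way. Invariance then forces the directed $4$-cycle
\[
j\longrightarrow i\longrightarrow \tau(j)\longrightarrow \tau(i)\longrightarrow j,
\]
in which the two ``diagonal'' bonds $b_{i,\tau(i)}$ and $b_{j,\tau(j)}$ vanish (again Lemma~\ref{lemma:condition_3_holds_of_order_2}) and opposite edges carry equal multiplicities, the two values being $p=|b_{i,j}|$ and $q=|b_{i,\tau(j)}|$. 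The restriction $\quiver|_{\{j,i,\tau(j)\}}$ is then a linear quiver of type $(p,q)$; since a linear quiver of type $(a,b)$ with $ab\ge 2$ is mutation-infinite and $\exdynE_7$ is mutation-finite, we get $p=q=1$, so $\quiver|_{\{i,j,\tau(i),\tau(j)\}}$ is the cyclically oriented $\dynD_4$.

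Next I would combine connectivity of $\quiver$ with Lemma~\ref{lemma_on_facets}. By Lemma~\ref{lemma_on_facets}, $\quiver$ is obtained from the acyclic quiver of type $\exdynE_7$ by a sequence of mutations skipping a single index $\ell$, and this $\ell$ may be taken non-fixed: if $\ell$ were a central vertex then $\quiver|_{[8]\setminus\{\ell\}}$ would be mutation equivalent to a restriction of the acyclic $\exdynE_7$-quiver that is a disjoint union of type-$\dynA$ quivers, and hence could contain no directed $4$-cycle, whereas it contains $\quiver|_{\{i,j,\tau(i),\tau(j)\}}$. Now pick, by connectivity, a vertex $u\notin\{i,j,\tau(i),\tau(j)\}$ joined to the $4$-cycle; then $\tau(u)$ is joined to its $\tau$-image. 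If $u$ is $\tau$-fixed, then $u$ is joined to two \emph{opposite} vertices of the $4$-cycle by equal bonds, of multiplicity $1$ by a further linear-quiver restriction, and $\quiver|_{\{u\}\cup\{i,j,\tau(i),\tau(j)\}}$, up to $\mutation_u$, is one of the five-vertex quivers appearing in the proof of Lemma~\ref{lemma:no bigons in affine E6 with Z/2Z-action}, each of which is shown there to reduce to a mutation-infinite quiver --- a contradiction. If $u$ is not $\tau$-fixed, it lies in the remaining pair $\{u,\tau(u)\}$, and I would split further according to whether some central vertex meets the $4$-cycle: if it does, we are back to the fixed-$u$ subcase, and if not, the two central vertices attach only to $\{u,\tau(u)\}$, and using Lemma~\ref{lemma:induced subquiver} (and, if convenient, a few orbit mutations, which preserve $\Z/2\Z$-invariance since same-orbit vertices are never adjacent by Lemma~\ref{lemma:condition_3_holds_of_order_2}) I would exhibit a restriction of $\quiver$ that is either mutation-infinite or a copy of $\quiver(2\exdynA_1)$, both impossible by Corollary~\ref{corollary:seed restriction}.

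I expect this last subcase --- a non-fixed $u$ with neither central vertex meeting the $4$-cycle --- together with the bookkeeping of directions and multiplicities of the bonds around the central vertices, to be the main obstacle; everything else is a finite check of small quivers against the mutation-infinite patterns collected in Lemma~\ref{lemma:complete 4-graph} and Corollary~\ref{corollary:complete 4-graph}, exactly as in the $\exdynE_6$ arguments.
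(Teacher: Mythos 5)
Your overall strategy matches the paper's: argue by contradiction, use $\Z/2\Z$-invariance and mutation-finiteness to force the four vertices $\{i,j,\tau(i),\tau(j)\}$ into a directed $4$-cycle with all multiplicities $1$, then use connectivity to attach further vertices and derive a forbidden subquiver. Your handling of the fixed-vertex attachment is also essentially the paper's: a $\tau$-fixed vertex meeting the $4$-cycle produces, up to a mutation, one of the three five-vertex configurations in \eqref{equation:squares}, each of which reduces to a mutation-infinite quiver or to $\quiver(2\exdynA_1)$, contradicting Corollary~\ref{corollary:seed restriction}. (The detour through Lemma~\ref{lemma_on_facets} to choose the skipped index $\ell$ non-fixed is sound but ends up unused in your argument; the paper does not need it for $\exdynE_7$.)

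However, there is a genuine gap exactly where you flag ``the main obstacle.'' After one concludes that the fixed vertices $1,2$ are not adjacent to the $4$-cycle on $\{4,5,7,8\}$, connectivity forces the remaining orbit $\{3,6\}$ to carry all the connections: $b_{1,3}=b_{1,6}=\pm1$ and at least one of $b_{3,4},b_{3,5},b_{3,7},b_{3,8}$ nonzero. The heart of the paper's proof is the resulting case analysis --- first ruling out that all four of these are nonzero (via the reductions \eqref{equation:square with two diagonals} and \eqref{equation:square with two diagonals 2}), and then treating the cases $b_{3,8}=0$; $b_{3,8}=b_{3,5}=0$; $b_{3,7}=b_{3,8}=0$ (which itself splits into four orientation subcases of $\quiver|_{[8]\setminus\{2\}}$); and $b_{3,5}=b_{3,7}=b_{3,8}=0$ --- each requiring an explicit mutation sequence landing on a mutation-infinite quiver, a linear quiver of type $(2,1)$, or $\quiver(2\exdynA_1)$. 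Your proposal replaces all of this with the statement that you ``would exhibit a restriction of $\quiver$ that is either mutation-infinite or a copy of $\quiver(2\exdynA_1)$,'' which is a declaration of intent rather than an argument; none of the needed configurations or mutation sequences are produced, and it is not a priori clear that every orientation/multiplicity pattern around $\{3,6\}$ does reduce to a forbidden quiver. Until that case analysis is actually carried out, the proof is incomplete.
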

\begin{proof}
If $i\le 2$ or $j\le 2$, then $b_{i,j}b_{i,\tau(j)}=b_{i,j}^2\ge 0$ since 
$\tau(i)=i$ or $\tau(j)=j$.
Suppose that $b_{i,j}b_{i,\tau(j)}<0$ for some $i,j\ge 3$. Then up to 
relabelling, we may assume that
\[
b_{8,7}=b_{5,4}<0<b_{5,7}=b_{8,4}.
\]
As before, $\quiver|_{\{4,5,7,8\}}$ is a cyclic graph and so we must have 
$b_{4,5}=b_{5,7}=b_{7,8}=b_{8,4}=1$.
\begin{align*}
\quiver|_{\{4,5,7,8\}}&=
\begin{tikzpicture}[baseline=.5ex]
\draw[fill] (0,1) node (A4) {} circle (2pt)
(-1,0) node (A5) {} circle (2pt) 
(0,-1) node (A7) {} circle (2pt) 
(1,0) node (A8) {} circle (2pt);
\draw (A4) node[above] {$4$};
\draw (A5) node[left] {$5$};
\draw (A7) node[below] {$7$};
\draw (A8) node[right] {$8$};
\draw[->] (A4) -- (A5);
\draw[->] (A5) -- (A7);
\draw[->] (A7) -- (A8);
\draw[->] (A8) -- (A4);
\end{tikzpicture}
\end{align*}

Suppose that $b_{i, j}\neq 0$ for $i\le 2$ and $j\le 7$. Then the restriction 
$\quiver|_{\{i,4,5,7,8\}}$ will be reduced to a mutation-infinite quiver by 
Lemma~\ref{lemma:no bigons in affine E6 with Z/2Z-action}. Hence we may assume 
that for $i\le 2$,
\[
b_{i,4}=b_{i,5}=b_{i,7}=b_{i,8}=0,
\]
and since $\quiver$ is connected, $b_{i,3}=b_{i,6}\neq0$ for some $i\le 2$, say 
$i=1$. Then the mutation-finiteness of the restriction $\quiver|_{\{1,3,6\}}$ 
implies that $b_{1,3}=b_{1,6}=\pm1$.

If $b_{3,j}b_{3,\tau(j)}<0$ for some $j=4,5,7,8$, then the restriction 
$\quiver|_{\{1,3,6,j,\tau(j)\}}$ is mutation-infinite as before. Therefore, 
$b_{3,j}b_{3,\tau(j)}\ge 0$ for all $j\in[8]$ since $b_{3,j}b_{3,\tau(j)}\ge 0$ 
for any $j=1,2,3,6$.

Again, since $\quiver$ is connected, one of $b_{3,4}, b_{3,5}, b_{3,7}$ and 
$b_{3,8}$ is nonzero.
Suppose that none of $b_{3,j}$ for $j=4,5,7,8$ is 
zero. Then two restrictions $\quiver|_{\{3,4,7\}}$ and $\quiver|_{\{3,5,8\}}$ 
force us to have 
\[
b_{3,4}=b_{3,7}=\pm1\quad\text{ and }\quad b_{3,5}=b_{3,7}=\pm1.
\]
Then the restriction $\quiver|_{\{3,4,5,7,8\}}$ is up to mutation $\mutation_3$ one 
of the following:
\begin{align*}
\begin{tikzpicture}[baseline=-.5ex]
\draw[fill] (0,0) node (A1) {} circle (2pt)
(0,1) node (A4) {} circle (2pt)
(-1,0) node (A5) {} circle (2pt) 
(0,-1) node (A6) {} circle (2pt) 
(1,0) node (A7) {} circle (2pt);
\draw[->] (A4) -- (A1) node[above right] {$3$};
\draw[->] (A6) -- (A1);
\draw[->] (A1) -- (A5);
\draw[->] (A1) -- (A7);
\draw[->] (A4) -- (A5) node[left] {$5$};
\draw[->] (A5) -- (A6) node[below] {$7$};
\draw[->] (A6) -- (A7) node[right] {$8$};
\draw[->] (A7) -- (A4) node[above] {$4$};
\begin{scope}[xshift=4cm]
\draw[fill] (0,0) node (A1) {} circle (2pt)
(0,1) node (A4) {} circle (2pt)
(-1,0) node (A5) {} circle (2pt) 
(0,-1) node (A6) {} circle (2pt) 
(1,0) node (A7) {} circle (2pt);
\draw[->] (A1) node[above right] {$3$} -- (A4);
\draw[->] (A1) -- (A6);
\draw[->] (A5) -- (A1);
\draw[->] (A7) -- (A1);
\draw[->] (A4) -- (A5) node[left] {$5$};
\draw[->] (A5) -- (A6) node[below] {$7$};
\draw[->] (A6) -- (A7) node[right] {$8$};
\draw[->] (A7) -- (A4) node[above] {$4$};
\end{scope}
\begin{scope}[xshift=8cm]
\draw[fill] (0,0) node (A1) {} circle (2pt)
(0,1) node (A4) {} circle (2pt)
(-1,0) node (A5) {} circle (2pt) 
(0,-1) node (A6) {} circle (2pt) 
(1,0) node (A7) {} circle (2pt);
\draw[->] (A1) node[above right] {$3$} -- (A4);
\draw[->] (A1) -- (A5);
\draw[->] (A1) -- (A6);
\draw[->] (A1) -- (A7);
\draw[->] (A4) -- (A5) node[left] {$5$};
\draw[->] (A5) -- (A6) node[below] {$7$};
\draw[->] (A6) -- (A7) node[right] {$8$};
\draw[->] (A7) -- (A4) node[above] {$4$};
\end{scope}
\end{tikzpicture}
\end{align*}
However, as seen in \eqref{equation:square with two diagonals} and 
\eqref{equation:square with two diagonals 2}, these three are reduced to either 
a mutation-infinite quiver or $\quiver(2\exdynA_1)$, which are impossible. Therefore, at 
least one of $b_{3,4},b_{3,5}, b_{3,7}$ and $b_{3,8}$ is zero.
\smallskip

\noindent \textbf{Case (i)} Suppose that $b_{3,8}=0$ but $b_{3,4}, b_{3,7}, 
b_{3,5}\neq 0$. 
Then the restriction $\quiver|_{\{3,4,5,7,8\}}$ is reduced to a 
mutation-infinite quiver as follows:
\[
\begin{tikzpicture}[baseline=-.5ex]
\begin{scope}
\draw[fill] (0,0) node (A1) {} circle (2pt)
(0,1) node (A4) {} circle (2pt)
(-1,0) node (A5) {} circle (2pt) 
(0,-1) node (A6) {} circle (2pt) 
(1,0) node (A7) {} circle (2pt);
\draw (A4) -- (A1) node[above right] {$3$};
\draw (A6) -- (A1);
\draw (A1) -- (A5);
\draw[->] (A4) -- (A5) node[left] {$5$};
\draw[->] (A5) -- (A6) node[below] {$7$};
\draw[->] (A6) -- (A7) node[right] {$8$};
\draw[->] (A7) -- (A4) node[above] {$4$};
\end{scope}
\begin{scope}[xshift=4cm]
\draw[fill] (0,0) node (A1) {} circle (2pt)
(0,1) node (A4) {} circle (2pt)
(-1,0) node (A5) {} circle (2pt) 
(0,-1) node (A6) {} circle (2pt) 
(1,0) node (A7) {} circle (2pt);
\draw (A4) -- (A1) node[above right] {$3$};
\draw (A6) -- (A1);
\draw (A1) -- (A5);
\draw[->] (A4) -- (A5) node[left] {$5$};
\draw[->] (A5) -- (A6) node[below] {$7$};
\draw[->] (A7) node[right] {$8$} to[out=-90,in=0] (A6);
\draw[->] (A4) node[above] {$4$} to[out=0,in=90] (A7);
\draw[->] (A6) to[out=45,in=-45] (A4);
\end{scope}
\begin{scope}[xshift=8cm]
\draw[fill] (0,0) node (A1) {} circle (2pt) (210:1) node (A3) {} circle (2pt) 
(90:1) node (A2) {} circle (2pt) (-30:1) node (A4) {} circle (2pt);
\begin{scope}[lightgray]
\draw[fill] (30:1) node (A8) {} circle (2pt);
\draw[->] (A2) to[out=0,in=120] (A8) node[above right] {$8$};
\draw[->] (A8) to[out=-60,in=60] (A4);
\end{scope}
\draw (A1) node[below] {$3$} -- (A2);
\draw (A1) -- (A3);
\draw (A1) -- (A4);
\draw[->] (A2) node[above] {$4$} -- (A3);
\draw[->] (A3) node[left] {$5$} -- (A4);
\draw[->] (A4) node[right] {$7$} -- (A2);
\end{scope}
\draw (6,0) node {$\succ$};
\end{tikzpicture}
\]
This yields a contradiction so this case cannot occur.

\smallskip
\noindent \textbf{Case (ii)} Suppose that $b_{3,8}=b_{3,5}=0$ but $b_{3,4}, 
b_{3,7}\neq 0$. Then 
$b_{3,4}=b_{3,7}=\pm1$ by the mutation-finiteness of $\quiver|_{\{3,4,7\}}$ and 
so the restriction $\quiver|_{\{3,4,5,7,8\}}$ is reduced to $\quiver(2\exdynA_1)$ as 
seen in \eqref{equation:square with one diagonal} and~\eqref{equation:square 
with two diagonals}, which yields a contradiction so this case cannot happen.

\smallskip
\noindent \textbf{Case (iii)}
Suppose that $b_{3,7}=b_{3,8}=0$. Considering
$\quiver|_{[8]\setminus\{2\}}$, there are four cases as follows:
\[
\begin{tikzpicture}[baseline=-.5ex]
\begin{scope}
\draw[fill] (0,0) node (A1) {} circle(2pt)
(135:0.5) node (A3) {} circle (2pt)
(-45:0.5) node (A6) {} circle (2pt)
(0,1) node (A4) {} circle (2pt)
(-1,0) node (A5) {} circle (2pt) 
(0,-1) node (A7) {} circle (2pt) 
(1,0) node (A8) {} circle (2pt);
\draw[->] (A4) to[out=180,in=90] (A5) node[left] {$5$};
\draw[->] (A5) to[out=-90,in=180] (A7) node[below] {$7$};
\draw[->] (A7) to[out=0,in=-90] (A8) node[right] {$8$};
\draw[->] (A8) to[out=90,in=0] (A4) node[above] {$4$};
\draw (A1) node [above right] {$1$} -- (A3);
\draw (A1) -- (A6);
\draw[->] (A3) node[above left] {$3$} -- (A4);
\draw[->] (A3) -- (A5);
\draw[->] (A6) node[below right] {$6$} -- (A7);
\draw[->] (A6) -- (A8);
\end{scope}
\begin{scope}[xshift=3.5cm]
\draw[fill] (0,0) node (A1) {} circle(2pt)
(135:0.5) node (A3) {} circle (2pt)
(-45:0.5) node (A6) {} circle (2pt)
(0,1) node (A4) {} circle (2pt)
(-1,0) node (A5) {} circle (2pt) 
(0,-1) node (A7) {} circle (2pt) 
(1,0) node (A8) {} circle (2pt);
\draw[->] (A4) to[out=180,in=90] (A5) node[left] {$5$};
\draw[->] (A5) to[out=-90,in=180] (A7) node[below] {$7$};
\draw[->] (A7) to[out=0,in=-90] (A8) node[right] {$8$};
\draw[->] (A8) to[out=90,in=0] (A4) node[above] {$4$};
\draw (A1) node [above right] {$1$} -- (A3);
\draw (A1) -- (A6);
\draw[->] (A4) -- (A3) node[above left] {$3$};
\draw[->] (A5) -- (A3);
\draw[->] (A7) -- (A6) node[below right] {$6$};
\draw[->] (A8) -- (A6);
\end{scope}
\begin{scope}[xshift=7cm]
\draw[fill] (0,0) node (A1) {} circle(2pt)
(135:0.5) node (A3) {} circle (2pt)
(-45:0.5) node (A6) {} circle (2pt)
(0,1) node (A4) {} circle (2pt)
(-1,0) node (A5) {} circle (2pt) 
(0,-1) node (A7) {} circle (2pt) 
(1,0) node (A8) {} circle (2pt);
\draw[->] (A4) to[out=180,in=90] (A5) node[left] {$5$};
\draw[->] (A5) to[out=-90,in=180] (A7) node[below] {$7$};
\draw[->] (A7) to[out=0,in=-90] (A8) node[right] {$8$};
\draw[->] (A8) to[out=90,in=0] (A4) node[above] {$4$};
\draw (A1) node [above right] {$1$} -- (A3);
\draw (A1) -- (A6);
\draw[->] (A4) -- (A3) node[above left] {$3$};
\draw[->] (A3) -- (A5);
\draw[->] (A7) -- (A6) node[below right] {$6$};
\draw[->] (A6) -- (A8);
\end{scope}
\begin{scope}[xshift=10.5cm]
\draw[fill] (0,0) node (A1) {} circle(2pt)
(135:0.5) node (A3) {} circle (2pt)
(-45:0.5) node (A6) {} circle (2pt)
(0,1) node (A4) {} circle (2pt)
(-1,0) node (A5) {} circle (2pt) 
(0,-1) node (A7) {} circle (2pt) 
(1,0) node (A8) {} circle (2pt);
\draw[->] (A4) to[out=180,in=90] (A5) node[left] {$5$};
\draw[->] (A5) to[out=-90,in=180] (A7) node[below] {$7$};
\draw[->] (A7) to[out=0,in=-90] (A8) node[right] {$8$};
\draw[->] (A8) to[out=90,in=0] (A4) node[above] {$4$};
\draw (A1) node [above right] {$1$} -- (A3);
\draw (A1) -- (A6);
\draw[->] (A3) node[above left] {$3$} -- (A4);
\draw[->] (A5) -- (A3);
\draw[->] (A6) node[below right] {$6$} -- (A7);
\draw[->] (A8) -- (A6);
\end{scope}
\end{tikzpicture}
\]
One can check easily that the first two are reduced to $\quiver(2\exdynA_1)$ as follows:
\begin{align*}
&\begin{tikzpicture}[baseline=-.5ex]
\begin{scope}
\draw[fill] (0,0) node (A1) {} circle(2pt)
(135:0.5) node (A3) {} circle (2pt)
(-45:0.5) node (A6) {} circle (2pt)
(0,1) node (A4) {} circle (2pt)
(-1,0) node (A5) {} circle (2pt) 
(0,-1) node (A7) {} circle (2pt) 
(1,0) node (A8) {} circle (2pt);
\draw[->] (A4) to[out=180,in=90] (A5) node[left] {$5$};
\draw[->] (A5) to[out=-90,in=180] (A7) node[below] {$7$};
\draw[->] (A7) to[out=0,in=-90] (A8) node[right] {$8$};
\draw[->] (A8) to[out=90,in=0] (A4) node[above] {$4$};
\draw (A1) node [above right] {$1$} -- (A3);
\draw (A1) -- (A6);
\draw[->] (A3) node[above left] {$3$} -- (A4);
\draw[->] (A3) -- (A5);
\draw[->] (A6) node[below right] {$6$} -- (A7);
\draw[->] (A6) -- (A8);
\end{scope}
\begin{scope}[xshift=4cm]
\draw[fill] (0,0) node (A1) {} circle(2pt)
(135:0.5) node (A3) {} circle (2pt)
(-45:0.5) node (A6) {} circle (2pt)
(0,1) node (A4) {} circle (2pt)
(-1,0) node (A5) {} circle (2pt) 
(0,-1) node (A7) {} circle (2pt) 
(1,0) node (A8) {} circle (2pt);
\draw[<-] (A4) to[out=180,in=90] (A5) node[left] {$5$};
\draw[<-] (A5) to[out=-90,in=180] (A7) node[below] {$7$};
\draw[<-] (A7) to[out=0,in=-90] (A8) node[right] {$8$};
\draw[<-] (A8) to[out=90,in=0] (A4) node[above] {$4$};
\draw (A1) node [above right] {$1$} -- (A3);
\draw (A1) -- (A6);
\draw[->] (A4) -- (A3) node[above left] {$3$};
\draw[-Implies,double distance=2pt] (A3) -- (A5);
\draw[->] (A7) -- (A6) node[below right] {$6$};
\draw[-Implies,double distance=2pt] (A6) -- (A8);
\end{scope}
\begin{scope}[xshift=8cm]
\draw[fill] (135:0.5) node (A3) {} circle (2pt)
(-1,0) node (A5) {} circle (2pt) 
(-45:0.5) node (A6) {} circle (2pt)
(1,0) node (A8) {} circle (2pt);
\begin{scope}[lightgray]
\draw[fill] (0,0) node (A1) {} circle(2pt)
(0,1) node (A4) {} circle (2pt)
(0,-1) node (A7) {} circle (2pt);
\draw[<-] (A4) to[out=180,in=90] (A5);
\draw[<-] (A5) to[out=-90,in=180] (A7) node[below] {$7$};
\draw[<-] (A7) to[out=0,in=-90] (A8);
\draw[<-] (A8) to[out=90,in=0] (A4) node[above] {$4$};
\draw (A1) node [above right] {$1$} -- (A3);
\draw (A1) -- (A6);
\draw[->] (A4) -- (A3);
\draw[->] (A7) -- (A6);
\end{scope}
\draw[-Implies,double distance=2pt] (A3) node[above left] {$3$} -- (A5) 
node[left] {$5$};
\draw[-Implies,double distance=2pt] (A6) node[below right] {$6$} -- (A8) 
node[right] {$8$};
\end{scope}
\draw[->] (1.5,0) -- (2.5,0) node[midway, above] {$\mutation_4\mutation_7$};
\draw (6,0) node {$\succ$};
\end{tikzpicture}\\
&\begin{tikzpicture}[baseline=-.5ex]
\begin{scope}
\draw[fill] (0,0) node (A1) {} circle(2pt)
(135:0.5) node (A3) {} circle (2pt)
(-45:0.5) node (A6) {} circle (2pt)
(0,1) node (A4) {} circle (2pt)
(-1,0) node (A5) {} circle (2pt) 
(0,-1) node (A7) {} circle (2pt) 
(1,0) node (A8) {} circle (2pt);
\draw[->] (A4) to[out=180,in=90] (A5) node[left] {$5$};
\draw[->] (A5) to[out=-90,in=180] (A7) node[below] {$7$};
\draw[->] (A7) to[out=0,in=-90] (A8) node[right] {$8$};
\draw[->] (A8) to[out=90,in=0] (A4) node[above] {$4$};
\draw (A1) node [above right] {$1$} -- (A3);
\draw (A1) -- (A6);
\draw[->] (A4) -- (A3) node[above left] {$3$};
\draw[->] (A5) -- (A3);
\draw[->] (A7) -- (A6) node[below right] {$6$};
\draw[->] (A8) -- (A6);
\end{scope}
\begin{scope}[xshift=4cm]
\draw[fill] (0,0) node (A1) {} circle(2pt)
(135:0.5) node (A3) {} circle (2pt)
(-45:0.5) node (A6) {} circle (2pt)
(0,1) node (A4) {} circle (2pt)
(-1,0) node (A5) {} circle (2pt) 
(0,-1) node (A7) {} circle (2pt) 
(1,0) node (A8) {} circle (2pt);
\draw[<-] (A4) to[out=180,in=90] (A5) node[left] {$5$};
\draw[<-] (A5) to[out=-90,in=180] (A7) node[below] {$7$};
\draw[<-] (A7) to[out=0,in=-90] (A8) node[right] {$8$};
\draw[<-] (A8) to[out=90,in=0] (A4) node[above] {$4$};
\draw (A1) node [above right] {$1$} -- (A3);
\draw (A1) -- (A6);
\draw[-Implies,double distance=2pt] (A4) -- (A3) node[above left] {$3$};
\draw[->] (A3) -- (A5);
\draw[-Implies,double distance=2pt] (A7) -- (A6) node[below right] {$6$};
\draw[->] (A6) -- (A8);
\end{scope}
\begin{scope}[xshift=8cm]
\draw[fill] (135:0.5) node (A3) {} circle (2pt)
(-45:0.5) node (A6) {} circle (2pt)
(0,1) node (A4) {} circle (2pt)
(0,-1) node (A7) {} circle (2pt);
\begin{scope}[lightgray]
\draw[fill] (0,0) node (A1) {} circle(2pt)
(-1,0) node (A5) {} circle (2pt) 
(1,0) node (A8) {} circle (2pt);
\draw[<-] (A4) to[out=180,in=90] (A5) node[left] {$5$};
\draw[<-] (A5) to[out=-90,in=180] (A7);
\draw[<-] (A7) to[out=0,in=-90] (A8) node[right] {$8$};
\draw[<-] (A8) to[out=90,in=0] (A4);
\draw (A1) node [above right] {$1$} -- (A3);
\draw (A1) -- (A6);
\draw[->] (A3) -- (A5);
\draw[->] (A6) -- (A8);
\end{scope}
\draw[-Implies,double distance=2pt] (A4) node[above] {$4$} -- (A3) node[above 
left] {$3$};
\draw[-Implies,double distance=2pt] (A7) node[below] {$7$} -- (A6) node[below 
right] {$6$};
\end{scope}
\draw[->] (1.5,0) -- (2.5,0) node[midway, above] {$\mutation_5\mutation_8$};
\draw (6,0) node {$\succ$};
\end{tikzpicture}
\end{align*}

For the third case, we further reduce it to the quiver 
$\quiver|_{\{3,4,5,7\}}$, which will be reduced to a linear quiver of 
type~$(2,1)$.
\[
\begin{tikzpicture}[baseline=-.5ex]
\begin{scope}
\draw[fill] (0,0) node (A1) {} circle(2pt)
(135:0.5) node (A3) {} circle (2pt)
(-45:0.5) node (A6) {} circle (2pt)
(0,1) node (A4) {} circle (2pt)
(-1,0) node (A5) {} circle (2pt) 
(0,-1) node (A7) {} circle (2pt) 
(1,0) node (A8) {} circle (2pt);
\draw[->] (A4) to[out=180,in=90] (A5) node[left] {$5$};
\draw[->] (A5) to[out=-90,in=180] (A7) node[below] {$7$};
\draw[->] (A7) to[out=0,in=-90] (A8) node[right] {$8$};
\draw[->] (A8) to[out=90,in=0] (A4) node[above] {$4$};
\draw (A1) node [above right] {$1$} -- (A3);
\draw (A1) -- (A6);
\draw[->] (A4) -- (A3) node[above left] {$3$};
\draw[->] (A3) -- (A5);
\draw[->] (A7) -- (A6) node[below right] {$6$};
\draw[->] (A6) -- (A8);
\end{scope}
\begin{scope}[xshift=4cm]
\draw[fill] (135:0.5) node (A3) {} circle (2pt)
(0,1) node (A4) {} circle (2pt)
(-1,0) node (A5) {} circle (2pt) 
(0,-1) node (A7) {} circle (2pt);
\begin{scope}[lightgray]
\draw[fill] (0,0) node (A1) {} circle(2pt)
(-45:0.5) node (A6) {} circle (2pt)
(1,0) node (A8) {} circle (2pt);
\draw[->] (A7) to[out=0,in=-90] (A8) node[right] {$8$};
\draw[->] (A8) to[out=90,in=0] (A4);
\draw (A1) node [above right] {$1$} -- (A3);
\draw (A1) -- (A6);
\draw[->] (A7) -- (A6) node[below right] {$6$};
\draw[->] (A6) -- (A8);
\end{scope}
\draw[->] (A4) node[above] {$4$} to[out=180,in=90] (A5) node[left] {$5$};
\draw[->] (A5) to[out=-90,in=180] (A7) node[below] {$7$};
\draw[->] (A4) -- (A3) node[above left] {$3$};
\draw[->] (A3) -- (A5);
\end{scope}
\begin{scope}[xshift=8cm]
\draw[fill] (135:0.5) node (A3) {} circle (2pt)
(0,1) node (A4) {} circle (2pt)
(-1,0) node (A5) {} circle (2pt) 
(0,-1) node (A7) {} circle (2pt);
\draw[-Implies,double distance=2pt] (A4) node[above] {$4$} to[out=180,in=90] 
(A5) node[left] {$5$};
\draw[->] (A5) to[out=-90,in=180] (A7) node[below] {$7$};
\draw[<-] (A4) -- (A3) node[above left] {$3$};
\draw[<-] (A3) -- (A5);
\end{scope}
\begin{scope}[xshift=12cm]
\draw[fill] (0,1) node (A4) {} circle (2pt)
(-1,0) node (A5) {} circle (2pt) 
(0,-1) node (A7) {} circle (2pt);
\begin{scope}[lightgray]
\draw[fill] (135:0.5) node (A3) {} circle (2pt);
\draw[<-] (A4) -- (A3) node[above left] {$3$};
\draw[<-] (A3) -- (A5);
\end{scope}
\draw[-Implies,double distance=2pt] (A4) node[above] {$4$} to[out=180,in=90] 
(A5) node[left] {$5$};
\draw[->] (A5) to[out=-90,in=180] (A7) node[below] {$7$};
\end{scope}
\draw (2,0) node {$\succ$};
\draw[->] (5.5,0) -- (6.5,0) node[midway, above] {$\mutation_3$};
\draw (10,0) node {$\succ$};
\end{tikzpicture}
\]

The fourth quiver is mutation equivalent to the quiver $\quiver'$
\[
\begin{tikzpicture}[baseline=-.5ex]
\begin{scope}
\draw[fill] (0,0) node (A1) {} circle(2pt)
(135:0.5) node (A3) {} circle (2pt)
(-45:0.5) node (A6) {} circle (2pt)
(0,1) node (A4) {} circle (2pt)
(-1,0) node (A5) {} circle (2pt) 
(0,-1) node (A7) {} circle (2pt) 
(1,0) node (A8) {} circle (2pt);
\draw[->] (A4) to[out=180,in=90] (A5) node[left] {$5$};
\draw[->] (A5) to[out=-90,in=180] (A7) node[below] {$7$};
\draw[->] (A7) to[out=0,in=-90] (A8) node[right] {$8$};
\draw[->] (A8) to[out=90,in=0] (A4) node[above] {$4$};
\draw (A1) node [above right] {$1$} -- (A3);
\draw (A1) -- (A6);
\draw[->] (A3) node[above left] {$3$} -- (A4);
\draw[->] (A5) -- (A3);
\draw[->] (A6) node[below right] {$6$} -- (A7);
\draw[->] (A8) -- (A6);
\end{scope}
\begin{scope}[xshift=4cm]
\draw[fill] (0,0) node (A1) {} circle(2pt)
(135:0.5) node (A3) {} circle (2pt)
(-45:0.5) node (A6) {} circle (2pt)
(0,1) node (A4) {} circle (2pt)
(-1,0) node (A5) {} circle (2pt) 
(0,-1) node (A7) {} circle (2pt) 
(1,0) node (A8) {} circle (2pt);
\draw[<-] (A4) to[out=180,in=90] (A5) node[left] {$5$};
\draw[<-] (A5) to[out=-90,in=180] (A7) node[below] {$7$};
\draw[<-] (A7) to[out=0,in=-90] (A8) node[right] {$8$};
\draw[<-] (A8) to[out=90,in=0] (A4) node[above] {$4$};
\draw (A1) node [above right] {$1$} -- (A3);
\draw (A1) -- (A6);
\draw[<-] (A3) node[above left] {$3$} -- (A4);
\draw[<-] (A6) node[below right] {$6$} -- (A7);
\end{scope}
\begin{scope}[xshift=8cm]
\draw[fill] (0,0) node (A1) {} circle(2pt)
(135:0.5) node (A3) {} circle (2pt)
(-45:0.5) node (A6) {} circle (2pt)
(0,1) node (A4) {} circle (2pt)
(-1,0) node (A5) {} circle (2pt) 
(0,-1) node (A7) {} circle (2pt) 
(1,0) node (A8) {} circle (2pt);
\draw[->] (A4) to[out=180,in=90] (A5) node[left] {$5$};
\draw[->] (A5) to[out=-90,in=180] (A7) node[below] {$7$};
\draw[->] (A7) to[out=0,in=-90] (A8) node[right] {$8$};
\draw[->] (A8) to[out=90,in=0] (A4) node[above] {$4$};
\draw (A1) node [above right] {$1$} -- (A3);
\draw (A1) -- (A6);
\draw[<-] (A3) node[above left] {$3$} -- (A4);
\draw[<-] (A6) node[below right] {$6$} -- (A7);
\end{scope}
\draw[->] (1.5,0) -- (2.5,0) node[midway, above] {$\mutation_4\mutation_7$};
\draw[->] (5.5,0) -- (6.5,0) node[midway, above] {$\mutation_5\mutation_8$};
\end{tikzpicture}=\quiver'
\]
and it reduces to the next case.

\smallskip
\noindent \textbf{Case (iv)} Suppose that $b_{3,4}\neq 0$ but 
$b_{3,5}=b_{3,7}=b_{3,8}=0$. 
Then it looks like the last quiver in the previous case. Then up to mutation 
$\mutation_1$, we may assume that $b_{1,3}b_{3,4}>0$. Namely, 
\[
\begin{tikzpicture}[baseline=-.5ex]
\begin{scope}
\draw[fill] (0,0) node (A1) {} circle(2pt)
(135:0.5) node (A3) {} circle (2pt)
(-45:0.5) node (A6) {} circle (2pt)
(0,1) node (A4) {} circle (2pt)
(-1,0) node (A5) {} circle (2pt) 
(0,-1) node (A7) {} circle (2pt) 
(1,0) node (A8) {} circle (2pt);
\draw[->] (A4) to[out=180,in=90] (A5) node[left] {$5$};
\draw[->] (A5) to[out=-90,in=180] (A7) node[below] {$7$};
\draw[->] (A7) to[out=0,in=-90] (A8) node[right] {$8$};
\draw[->] (A8) to[out=90,in=0] (A4) node[above] {$4$};
\draw[<-] (A1) node [above right] {$1$} -- (A3);
\draw[<-] (A1) -- (A6);
\draw[<-] (A3) node[above left] {$3$} -- (A4);
\draw[<-] (A6) node[below right] {$6$} -- (A7);
\end{scope}
\begin{scope}[xshift=4cm]
\draw[fill] (0,0) node (A1) {} circle(2pt)
(135:0.5) node (A3) {} circle (2pt)
(-45:0.5) node (A6) {} circle (2pt)
(0,1) node (A4) {} circle (2pt)
(-1,0) node (A5) {} circle (2pt) 
(0,-1) node (A7) {} circle (2pt) 
(1,0) node (A8) {} circle (2pt);
\draw[->] (A4) to[out=180,in=90] (A5) node[left] {$5$};
\draw[->] (A5) to[out=-90,in=180] (A7) node[below] {$7$};
\draw[->] (A7) to[out=0,in=-90] (A8) node[right] {$8$};
\draw[->] (A8) to[out=90,in=0] (A4) node[above] {$4$};
\draw[<-] (A1) -- (A4);
\draw[<-] (A1) -- (A7);
\draw[->] (A1) node [above right] {$1$} -- (A3);
\draw[->] (A1) -- (A6);
\draw[->] (A3) node[above left] {$3$} -- (A4);
\draw[->] (A6) node[below right] {$6$} -- (A7);
\end{scope}
\begin{scope}[xshift=8cm]
\draw[fill] (0,0) node (A1) {} circle(2pt)
(0,1) node (A4) {} circle (2pt)
(-1,0) node (A5) {} circle (2pt) 
(0,-1) node (A7) {} circle (2pt) 
(1,0) node (A8) {} circle (2pt);
\begin{scope}[lightgray]
\draw[fill] (135:0.5) node (A3) {} circle (2pt)
(-45:0.5) node (A6) {} circle (2pt);
\draw[->] (A1) -- (A3);
\draw[->] (A1) -- (A6);
\draw[->] (A3) node[above left] {$3$} -- (A4);
\draw[->] (A6) node[below right] {$6$} -- (A7);
\end{scope}
\draw[->] (A4) to[out=180,in=90] (A5) node[left] {$5$};
\draw[->] (A5) to[out=-90,in=180] (A7) node[below] {$7$};
\draw[->] (A7) to[out=0,in=-90] (A8) node[right] {$8$};
\draw[->] (A8) to[out=90,in=0] (A4) node[above] {$4$};
\draw[<-] (A1) -- (A4);
\draw[<-] (A1) node[right] {$1$} -- (A7);
\end{scope}
\draw[->] (1.5,0) -- (2.5,0) node[midway, above] {$\mutation_3\mutation_6$};
\draw (6,0) node {$\succ$};
\end{tikzpicture}
\]
Since the last quiver is reduced to $\quiver(2\exdynA_1)$ by \eqref{equation:square with 
one diagonal} and \eqref{equation:square with two diagonals}, this is a 
contradiction, which completes the proof.
\end{proof}

\begin{proof}[Proof of Theorem~\ref{thm_invaraint_implies_admissible} for 
$\dynX = \exdynE$]
For a $G$-invariant quiver $\quiver$ of type~$\exdynE_6$ or $\exdynE_7$ with 
$G=\Z/2\Z$ or $\Z/3\Z$, the condition~\ref{def_admissible_3} in 
Definition~\ref{definition:admissible quiver}(2) follows from 
Lemmas~\ref{lemma:condition_3_holds_of_order_2} and 
\ref{lemma:no monogons in affine E6 with Z/3Z-action}. 
The condition~\ref{def_admissible_4} follows from 
Lemmas~\ref{lemma:no bigons in affine E6 with Z/3Z-action}, 
\ref{lemma:no bigons in affine E6 with Z/2Z-action} and 
\ref{lemma:no bigons in affine E7 with Z/2Z-action}. Therefore 
$\quiver$ is $G$-admissible as claimed.
\end{proof}

\subsection{\texorpdfstring{Admissibility of quivers of 
type~$\exdynD$}{Admissibility of quivers of affine D}}
Throughout this section, we denote $G = \Z/2\Z, \Z/3\Z$, or $(\Z/2\Z)^2$ and 
$\dynY = \exdynD_n^G$. 
Let $\quiver$ be a $G$-invariant quiver on $[n+1]$ of 
type~$\exdynD_{n}$ and $\qbasis=\qbasis(\quiver)$.  
\begin{lemma}\label{lemma:no monogons in affine D}
For each $i\in[n+1]$ and $g\in G$, we obtain
\[
b_{i,g(i)}=0.
\]
\end{lemma}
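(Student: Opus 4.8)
The plan is to split the argument according to the group $G$. When $G=\Z/2\Z$ or $G=(\Z/2\Z)^2$, every non-identity element of $G$ has order $2$ and $b_{i,i}=0$ since $\quiver$ has no loops; hence the statement follows at once from Lemma~\ref{lemma:condition_3_holds_of_order_2}. The only remaining case is $G=\Z/3\Z$, which by Table~\ref{table:all possible foldings} forces $\dynX=\exdynD_4$, so that $\quiver$ is a quiver on $[5]$. I will treat this case directly, in close parallel to the first part of the proof of Lemma~\ref{lemma:no monogons in affine E6 with Z/3Z-action}.

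So let $G=\Z/3\Z$ with generator $\tau$. Since $|G|=3$ and $\#[5]=5$, the action on $[5]$ has exactly two fixed points, say $a$ and $b$, and one orbit of size three, say $\{c,d,e\}$ with $\tau\colon c\mapsto d\mapsto e\mapsto c$. It suffices to prove $b_{c,d}=0$: then $G$-invariance gives $b_{c,d}=b_{d,e}=b_{e,c}=0$, and consequently $b_{i,g(i)}=0$ for every $i\in[5]$ and $g\in G$ (the fixed-point case and the case $g=e$ being trivial). Assume for contradiction that the common value $k\coloneqq b_{c,d}=b_{d,e}=b_{e,c}$ is nonzero. Then $\quiver|_{\{c,d,e\}}$ is a cyclic triangle of type $(|k|,|k|,|k|)$ with $|k|\ge 1$, and in particular it is a connected subquiver.

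Now I bring in connectedness: $\quiver$ is connected because it is of type~$\exdynD_4$, so at least one of the fixed points is joined to the orbit $\{c,d,e\}$; relabelling if necessary, I may assume $b_{a,c}\neq 0$. By $G$-invariance, $b_{a,c}=b_{a,d}=b_{a,e}=m$ for some $m\neq 0$. Therefore $\quiver|_{\{a,c,d,e\}}$ is a quiver on four vertices in which every pair of distinct vertices is connected, and it contains the cyclic triangle $\quiver|_{\{c,d,e\}}$, whose type $(|k|,|k|,|k|)$ is different from $(1,1,2)$. By Corollary~\ref{corollary:complete 4-graph}, $\quiver|_{\{a,c,d,e\}}$ is mutation-infinite, hence $\quiver$ is mutation-infinite by Lemma~\ref{lemma:mutation_finite_cannot_have_mutation_infinite_subquiver}(2); this contradicts the fact that $\exdynD_4$ is of finite mutation type (Theorem~\ref{thm_finite_type_classification}(2)). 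Thus $k=0$, which finishes the argument.

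The main obstacle, such as it is, is organisational rather than mathematical: one has to notice that for every $\exdynD$-folding except $(\exdynD_4,\Z/3\Z,\dynD_4^{(3)})$ the group consists of involutions, so that only the single three-fold case $\exdynD_4$ needs separate treatment, and in that case the orbit structure on $[5]$ is rigid enough that one complete $4$-subquiver argument suffices. Everything else is a direct appeal to Lemma~\ref{lemma:condition_3_holds_of_order_2} and Corollary~\ref{corollary:complete 4-graph}.
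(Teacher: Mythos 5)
Your proposal is correct and follows essentially the same route as the paper: the involution cases are dispatched by Lemma~\ref{lemma:condition_3_holds_of_order_2}, and the $(\exdynD_4,\Z/3\Z)$ case is handled by producing a complete $4$-vertex subquiver containing a cyclic triangle of type $(|k|,|k|,|k|)$ and invoking Corollary~\ref{corollary:complete 4-graph}. If anything, you spell out the contradiction hypothesis and the orbit structure more explicitly than the paper does.
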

\begin{proof}
Suppose that $G=\Z/2\Z$ or $\Z/2\Z\times\Z/2\Z$. Then we prove the claim by 
Lemma~\ref{lemma:condition_3_holds_of_order_2}.
Let $G=\Z/3\Z$ which acts on $\exdynD_4$ and identifies $\{3,4,5\}$ 
(see~\ref{fig_D4_D4_3} in 
Appendix~\ref{appendix_actions_on_Dynkin_diagrams}). Since 
$\quiver$ is connected, $b_{i,3}=b_{i,4}=b_{i,5}=b\neq 0$ for some $i\le 2$.
Then every pair of distinct vertices in $\quiver|_{\{i,3,4,5\}}$ is connected
and it contains a cyclic triangle $\quiver|_{\{3,4,5\}}$ of type~$(b,b,b)$, 
which is mutation-infinite by Corollary~\ref{corollary:complete 4-graph}. 
This yields a contradiction and we are done.
\end{proof}
Hence, to prove the admissibility, it is sufficient to show that the 
condition~\ref{def_admissible_4} holds, that is,  
$b_{i,j}b_{i,\tau(j)}\ge 0$ for all $i,j\in[n+1]$.

\subsubsection{\texorpdfstring{$(\dynX,G,\dynY)=(\exdynD_4, \Z/2\Z\times\Z/2\Z, \dynA_2^{(2)})$}{(X, G, Y)=(affine D4, Z/2Z x Z/2Z, affine A2 2)} 
or \texorpdfstring{$(\exdynD_4, \Z/3\Z, \dynD_4^{(3)})$}{(affine D4, Z/3Z, affine D4 3)}
}
\label{section_D4_D43}

\begin{lemma}\label{lemma:no bigons in affine D4}
For each $i,j\in[5]$, we have
\[
b_{i,j}b_{i,\tau(j)}\ge 0.
\]
\end{lemma}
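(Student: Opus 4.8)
The plan is to deduce the inequality straight from the monogon statement of Lemma~\ref{lemma:no monogons in affine D} together with the $G$-invariance of $\quiver$; for $\exdynD_4$ the orbit structure is rigid enough that this needs no mutation-finiteness analysis at all, in contrast with the $\exdynE$ cases treated above.

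First I would record the orbit structure of $[5]$. In both cases the unique central (degree-four) vertex of the $\exdynD_4$ diagram is fixed by every $g\in G$. For $G=\Z/3\Z$ the remaining four vertices split into one more fixed leaf and the orbit $\{3,4,5\}$ cycled by $\tau$; for $G=(\Z/2\Z)^2$ the four leaves form a single orbit, with $G$ acting on them as the regular representation of the Klein four-group (this is precisely the action of Appendix~\ref{appendix_actions_on_Dynkin_diagrams}, and it is what forces the folded diagrams $\dynD_4^{(3)}$ and $\dynA_2^{(2)}$ to have three and two vertices respectively). Writing $[5]=F\sqcup L$ with $F$ the set of $G$-fixed vertices and $L=[5]\setminus F$, in each case $L$ is a single $G$-orbit, so $G$ acts transitively on $L$ and maps $L$ to itself.

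The next step is the observation that $b_{i,j}=0$ whenever $i,j\in L$: for $i=j$ this is trivial, and for $i\neq j$ transitivity gives $g\in G$ with $g(i)=j$, hence $b_{i,j}=b_{i,g(i)}=0$ by Lemma~\ref{lemma:no monogons in affine D}. With this in hand, fix $i,j\in[5]$ and $g\in G$; I want $b_{i,j}b_{i,g(j)}\ge 0$, which is condition~\ref{def_admissible_4} rewritten through $G$-invariance (if $i'=g(i)$ then $b_{i',j}=b_{i,g^{-1}(j)}$). If $i\in F$ then $g(i)=i$, so $b_{i,g(j)}=b_{g(i),g(j)}=b_{i,j}$ and the product equals $b_{i,j}^2\ge 0$; if $j\in F$ then $g(j)=j$ and again the product is $b_{i,j}^2\ge 0$; otherwise $i,j\in L$ and the product vanishes by the previous observation. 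Since $[5]=F\sqcup L$ these three cases cover everything, which establishes the lemma.

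The only step that needs genuine care is pinning down the orbit structure: one must know that for $G=(\Z/2\Z)^2$ the action on the four leaves is the transitive (regular) one rather than an intransitive copy of $(\Z/2\Z)^2$ inside $S_4$, and that the $\Z/3\Z$-action fixes exactly the central vertex and one leaf -- both of which are given explicitly in Appendix~\ref{appendix_actions_on_Dynkin_diagrams}. Everything else is immediate from Lemma~\ref{lemma:no monogons in affine D} and $G$-invariance.
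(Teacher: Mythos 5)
Your proof is correct and is essentially the paper's own argument: the paper likewise disposes of every pair by noting that when one index is $G$-fixed the product is a square $b_{i,j}^2$ by invariance, and when both indices lie in the non-trivial orbit the entry vanishes by Lemma~\ref{lemma:no monogons in affine D}. The only cosmetic difference is that you treat the two group actions uniformly via the decomposition $[5]=F\sqcup L$, whereas the paper runs the same two observations separately for $(\Z/2\Z)^2$ and $\Z/3\Z$.
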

\begin{proof}
Suppose that $G=\Z/2\Z\times \Z/2\Z$ and $\exdynD_4^G = \dynA_2^{(2)}$. Then we 
only need to prove that 
\[
b_{1,j}b_{1,\tau(j)}\ge 0
\]
for $2\le j$, which is obvious since 
$b_{1,j}=b_{\tau(1),\tau(j)}=b_{1,\tau(j)}$.

For $G=\Z/3\Z$, we have $\exdynD_4^G = \dynD_4^{(3)}$. If $i\le 2$ or $j\le 2$, 
then there is nothing to prove. Otherwise, $b_{i,j}=0$ by Lemma~\ref{lemma:no 
monogons in affine D} and we are done.
\end{proof}

\subsubsection{\texorpdfstring{$(\dynX,G,\dynY)=(\exdynD_n, \Z/2\Z, \exdynC_{n-2})$}{(X, G, Y)=(affine Dn, Z/2Z, affine C n-2)}}
\label{section_Dn_Cn-2}

The action of the generator $\tau\in\Z/2\Z$ is as follows:
\begin{align*}
\tau(i) =\begin{cases}
i & \text{ if }i=1, 4,\dots, n-1;\\
3 &\text{ if } i=2;\\
2 & \text{ if }i=3;\\
n+1 &\text{ if } i=n;\\
n & \text{ if }i=n+1.
\end{cases}
\end{align*}
See~\ref{fig_Dn_Cn-2} in Appendix~\ref{appendix_actions_on_Dynkin_diagrams}. 

\begin{lemma}\label{lemma:no bigons in affine Dn yielding twisted affine Cn-2}
For each $i,j\in[n+1]$, we have
\[
b_{i,j}b_{i,\tau(j)}\ge 0.
\]
\end{lemma}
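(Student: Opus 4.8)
The plan is to argue by contradiction, following the template of the earlier type-by-type arguments, the new feature of $\exdynD_n$ being that the two pairs of leaves $\{2,3\}$ and $\{n,n+1\}$ sit at opposite ends of the diagram.

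\emph{Reduction.} Since $b_{i,j}b_{i,\tau(j)}=b_{i,j}^2\ge 0$ whenever $\tau(j)=j$, it suffices to treat $j\in\{2,3\}\cup\{n,n+1\}$; and since $\Z/2\Z$-invariance gives $b_{i,j}=b_{\tau(i),\tau(j)}$, the product is a square when $\tau(i)=i$, while $b_{i,i}b_{i,\tau(i)}=0$ when $i$ lies in a two-element orbit, and the case $i\in\{2,3\}$, $j\in\{n,n+1\}$ equals $b_{n,2}b_{n,3}$ once one uses $b_{2,n}=-b_{n,2}$ and $b_{2,n+1}=-b_{n,3}$. So everything comes down to proving $b_{n,2}b_{n,3}\ge 0$. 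Suppose not. Then $b_{n+1,3}=b_{n,2}$ and $b_{n+1,2}=b_{n,3}$ by invariance, $b_{2,3}=b_{n,n+1}=0$ by Lemma~\ref{lemma:no monogons in affine D}, and the three-vertex restriction $\quiver|_{\{2,3,n\}}$, being a linear quiver, is mutation-finite only if $|b_{n,2}|=|b_{n,3}|=1$; hence $\quiver|_{\{2,3,n,n+1\}}$ is a directed $4$-cycle with simple arrows, which after a possible relabeling of $\{2,3\}$ we write as $n\to 2\to(n+1)\to 3\to n$. Examining $\quiver|_{\{1,2,3\}}$ and $\quiver|_{\{1,n,n+1\}}$ in the same way, $b_{1,2}=b_{1,3}$ and $b_{1,n}=b_{1,n+1}$ each lie in $\{0,\pm1\}$.

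\emph{The facet dichotomy.} By Lemma~\ref{lemma_on_facets}, $\quiver$ is obtained from $\quiver(\exdynD_n)$ by a sequence of mutations missing some index $\ell$; since $\quiver$ and $\quiver(\exdynD_n)$ are both $\tau$-invariant, $\quiver$ is also reachable by a sequence missing $\tau(\ell)$. If some missed index lies outside $\{2,3,n,n+1\}$, then it is an interior vertex of the path of the $\exdynD_n$ diagram joining the two forks, so deleting it disconnects $\{2,3\}$ from $\{n,n+1\}$. Thus these two pairs lie in distinct connected components of $\quiver(\exdynD_n)|_{[n+1]\setminus\{\ell\}}$, and — because mutations preserve the decomposition of a quiver into connected components (a new arrow produced by $\mutation_k$ joins two vertices already in the component of $k$) — also of $\quiver|_{[n+1]\setminus\{\ell\}}$. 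This contradicts the directed $4$-cycle on $\{2,3,n,n+1\}$.

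\emph{The remaining case.} It remains to handle the situation in which every missed index lies in $\{2,3,n,n+1\}$. Replacing $\ell$ by $\tau(\ell)$ if needed, and using the reflection of the $\exdynD_n$ diagram interchanging its two forks — which commutes with $\tau$, hence preserves both the class of $\tau$-invariant type-$\exdynD_n$ quivers and the hypothesis that the four leaves carry a directed $4$-cycle — I would reduce to the case $\ell=2$. Then $\quiver|_{[n+1]\setminus\{2\}}$ is obtained from $\quiver(\exdynD_n)|_{[n+1]\setminus\{2\}}$, an acyclic quiver on a $\dynD_{n-1}$ diagram, by mutations, so it is of finite type $\dynD_{n-1}$; inside it the leaves $3,n,n+1$ form the directed path $n+1\to 3\to n$, and the trivalent vertex $1$ is present with $b_{1,3}=b_{1,2}$, $b_{1,n}=b_{1,n+1}$. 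Now I would run through the few possibilities for $b_{1,3}$ and $b_{1,n}$: in any case where one of them is nonzero, the restriction $\quiver|_{\{1,3,n,n+1\}}$ (or $\quiver|_{\{1,2,3,n,n+1\}}$), after a short sequence of single-vertex mutations exactly as in \eqref{equation:square with two diagonals} and \eqref{equation:square with two diagonals 2}, reduces $\quiver$ either to a mutation-infinite quiver — typically through a forbidden cyclic triangle, cf.\ Corollary~\ref{corollary:complete 4-graph} — or to $\quiver(2\exdynA_1)$, contradicting Corollary~\ref{corollary:seed restriction}. This forces $b_{1,2}=b_{1,3}=b_{1,n}=b_{1,n+1}=0$, so $1$ is non-adjacent to all four leaves; connectedness of $\quiver$ then produces an edge from one of the leaves into the inner path $\{4,\dots,n-1\}$ (for $n=4$ the inner path is empty and $\quiver$ is already disconnected, so this case is vacuous), and one last reduction of the resulting local configuration — again to a mutation-infinite quiver or to $\quiver(2\exdynA_1)$ — gives the contradiction.

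The main obstacle is precisely this last case. In the $(\Z/2\Z)^2$-folding of $\exdynD_4$ the inequality $b_{i,j}b_{i,\tau(j)}\ge 0$ is forced purely by symmetry, since there the four leaves form a single orbit and any two of them are interchanged by a group element; here $\{2,3\}$ and $\{n,n+1\}$ are distinct orbits not swapped by $G$, so the ``cross'' inequality $b_{n,2}b_{n,3}\ge 0$ carries genuine content and has to be extracted by a mutation-finiteness argument. Keeping that analysis uniform in $n$ is where the care goes: the key device is that a restriction missing a single inner path vertex is a disjoint union of finite-type $\dynD$-pieces, which pins down almost the whole structure and leaves only finitely many local configurations near the four leaves to be ruled out by hand.
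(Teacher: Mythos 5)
Your reduction to the single inequality $b_{n,2}b_{n,3}\ge 0$ and your derivation of the simple directed $4$-cycle on $\{2,3,n,n+1\}$ are exactly what the paper does. The decisive step is also the same in substance: attach a $\tau$-fixed vertex to the $4$-cycle and observe that the resulting five-vertex restriction is, up to a mutation at that vertex, one of the three quivers in \eqref{equation:squares}, each of which reduces to a mutation-infinite quiver or to $\quiver(2\exdynA_1)$, contradicting Corollary~\ref{corollary:seed restriction}. Where you differ is the ``facet dichotomy'': using Lemma~\ref{lemma_on_facets} plus the fact that mutations preserve connected components to kill the case where the missed index is interior is a valid and rather elegant observation the paper does not make. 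But it buys you nothing here, because the argument you are forced to run in the remaining case --- connectedness of $\quiver$ produces some $\tau$-fixed vertex $\ell\notin\{2,3,n,n+1\}$ with $b_{\ell,2}=b_{\ell,3}\neq 0$ or $b_{\ell,n}=b_{\ell,n+1}\neq 0$, and then the five-vertex restriction is ruled out --- works uniformly, with no reference to which index the mutation sequence misses. That is the paper's entire proof.

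The one place your write-up does not stand on its own is precisely that remaining case. You privilege vertex $1$ and call it ``the trivalent vertex,'' which suggests you are importing the shape of the initial Dynkin diagram into the mutated quiver $\quiver$; all you actually have is the invariance relations $b_{1,2}=b_{1,3}$ and $b_{1,n}=b_{1,n+1}$, and vertex $1$ is just one candidate for the attaching vertex. The phrases ``run through the few possibilities'' and ``one last reduction of the resulting local configuration'' defer the entire content of the lemma to an unexecuted computation. To close the argument you need to say, for an arbitrary $\tau$-fixed $\ell$ adjacent to a leaf orbit: (i) $|b_{\ell,2}|=|b_{\ell,3}|\le 1$ and $|b_{\ell,n}|=|b_{\ell,n+1}|\le 1$ because $\quiver|_{\{\ell,2,3\}}$ and $\quiver|_{\{\ell,n,n+1\}}$ are linear quivers of type $(a,a)$; (ii) the symmetry $b_{\ell,2}=b_{\ell,3}$, $b_{\ell,n}=b_{\ell,n+1}$ forces $\quiver|_{\{2,3,n,n+1,\ell\}}$ to be one of the three quivers of \eqref{equation:squares} up to $\mutation_\ell$ and relabelling; and (iii) the reductions \eqref{equation:square with one diagonal}, \eqref{equation:square with two diagonals}, \eqref{equation:square with two diagonals 2} then apply verbatim. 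With those three sentences your proof is complete and coincides with the paper's; without them it is a sketch.
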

\begin{proof}
If $i\not\in\{2,3,n,n+1\}$ or $j\not\in\{2,3,n,n+1\}$, then there is nothing to 
prove since $\tau(i)=i$ or $\tau(j)=j$ and  
\[
b_{i,j}=b_{\tau(i),\tau(j)} = 
\begin{cases}
b_{i,\tau(j)} & \text{ if } i\not\in\{2,3,n,n+1\};\\
b_{\tau(i),j} = b_{\tau^2(i),\tau(j)} = b_{i,\tau(j)} & \text{ if } j\not\in\{2,3,n,n+1\}.
\end{cases}
\]

If $i,j$ are in the same $\Z/2\Z$-orbit, namely, either $\{i,j\}=\{2,3\}$ or 
$\{i,j\}=\{n,n+1\}$, then we are done since $b_{i,j}=0$ by  Lemma~\ref{lemma:no 
monogons in affine D}.

Finally, suppose that $i\in\{2,3\}$, $j\in\{n,n+1\}$ and 
$b_{i,j}b_{i,\tau(j)}<0$. Then we may assume that $i=2$, $j=n$ and
\[
b_{3,n+1}=b_{2,n}<b_{2,n+1}=b_{3,n}
\]
and therefore the restriction $\quiver|_{\{2,3,n,n+1\}}$ is a directed cycle of 
length $4$. On the other hand, since $\quiver$ is connected, there exists 
$\ell\in[n+1]\setminus\{2,3,n,n+1\}$ such that 
\[
b_{\ell,2}=b_{\ell,3}\neq 0\quad\text{ or }\quad
b_{\ell,n}=b_{\ell,n+1}\neq 0.
\]
Then up to $\mutation_\ell$, the restriction $\quiver|_{\{2,3,n,n+1,\ell\}}$ 
looks like one of three quivers depicted in \eqref{equation:squares}. Indeed, 
as seen in \eqref{equation:square with one diagonal}, \eqref{equation:square 
with two diagonals} and \eqref{equation:square with two diagonals 2}, the 
quiver $\quiver|_{\{2,3,n,n+1,\ell\}}$ eventually reduces to a 
mutation-infinite quiver or the quiver $2\exdynA_1$. This contradicts to 
Corollary~\ref{corollary:seed restriction}.
\end{proof}

\subsubsection{\texorpdfstring{$(\dynX,G,\dynY)=(\exdynD_n, \Z/2\Z, \dynA_{2(n-1)-1}^{(2)})$}{(X, G, Y)=(affine Dn, Z/2Z, affine A 2(n-1)-1 2)}}
\label{section_Dn_A2n-1_2}

The action of the generator $\tau\in\Z/2\Z$ is as follows:
\begin{align*}
\tau(i) =\begin{cases}
i & \text{ if } i\le n-1;\\
n+1 & \text{ if } i=n;\\
n & \text{ if } i=n+1.
\end{cases}
\end{align*}
See~\ref{fig_Dn_A2n-1_2} in Appendix~\ref{appendix_actions_on_Dynkin_diagrams}.

\begin{lemma}\label{lemma:no bigons in affine Dn yielding twisted affine A2n-1}
For each $i,j\in[n+1]$, we have
\[
b_{i,j}b_{i,\tau(j)}\ge 0.
\]
\end{lemma}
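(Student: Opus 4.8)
The plan is to reduce the claim to a short case check that exploits the key feature of this action: $\tau$ has a single nontrivial orbit, namely $\{n,n+1\}$, consisting of the two leaves of one of the two forks of the $\exdynD_n$ diagram. In contrast with the twisted $\exdynC_{n-2}$ case of Section~\ref{section_Dn_Cn-2}, where $\tau$ swaps the leaves of \emph{both} forks and one must exclude a directed $4$-cycle via the $2\exdynA_1$ obstruction of Corollary~\ref{corollary:seed restriction}, here no such configuration is forced, and the argument will be purely formal.

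Concretely, I would split into cases according to whether $i$ and $j$ lie in $\{n,n+1\}$. If $j\le n-1$, then $\tau(j)=j$, so $b_{i,j}b_{i,\tau(j)}=b_{i,j}^2\ge 0$, which settles all cases except $j\in\{n,n+1\}$. If $j\in\{n,n+1\}$ and $i\le n-1$, then $\tau(i)=i$, and the $\tau$-invariance $b_{i,j}=b_{\tau(i),\tau(j)}$ yields $b_{i,j}=b_{i,\tau(j)}$, so again $b_{i,j}b_{i,\tau(j)}=b_{i,j}^2\ge 0$. The only remaining possibility is $i,j\in\{n,n+1\}$: if $i=j$ the product vanishes since $b_{i,i}=0$; if $i\ne j$ then $\tau(j)=i$, so $b_{i,\tau(j)}=b_{i,i}=0$ and the product vanishes once more (and $b_{n,n+1}=0$ is in any case already recorded in Lemma~\ref{lemma:no monogons in affine D}). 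This exhausts all $i,j\in[n+1]$.

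I do not anticipate any real difficulty here; this subsection is the easiest of the type~$\exdynD$ cases. The substantive content for type $\exdynD$ lies rather in the cases where $\tau$ (or the $(\Z/2\Z)^2$-action) couples two distinct forks, for which one imports the reductions recorded in \eqref{equation:square with one diagonal}, \eqref{equation:square with two diagonals} and \eqref{equation:square with two diagonals 2} together with Corollary~\ref{corollary:seed restriction}. In the present triple, the single transposition of a pair of leaves makes $G$-invariance force condition~\ref{def_admissible_4} for free, which, combined with Lemma~\ref{lemma:no monogons in affine D} for condition~\ref{def_admissible_3}, yields the $G$-admissibility of $\quiver$.
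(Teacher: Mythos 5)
Your proposal is correct and follows essentially the same route as the paper: the cases with $i\le n-1$ or $j\le n-1$ are handled by the formal $\tau$-invariance identity $b_{i,j}=b_{\tau(i),\tau(j)}$ (exactly the computation the paper imports from the $\exdynC_{n-2}$ case), and the remaining case $i,j\in\{n,n+1\}$ is killed because the relevant entry is a diagonal entry or is covered by Lemma~\ref{lemma:no monogons in affine D}. No gaps.
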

\begin{proof}
If $i\le n-1$ or $j\le n-1$, then there is nothing to prove by the same 
argument as Lemma~\ref{lemma:no bigons in affine Dn yielding twisted affine 
Cn-2}. 
Otherwise, $b_{i,j}=0$ by Lemma~\ref{lemma:no monogons in affine D} and we are 
done.
\end{proof}

\subsubsection{\texorpdfstring{$(\dynX,G,\dynY)=(\exdynD_{2n}, \Z/2\Z, \exdynB_n)$}{(X, G, Y)=(affine D2n, Z/2Z, affine Bn)}}
\label{section_D2n_Bn}

The action of the generator $\tau\in\Z/2\Z$ is as follows:
\begin{align*}
\tau(i) =\begin{cases}
2n-1 & \text{ if } i=1;\\
2n+1 & \text{ if }i=2;\\
2n & \text{ if }i=3;\\
2n+2-i & \text{ if }4\le i\le 2n-2;\\
1 & \text{ if }i = 2n-1; \\
3 & \text{ if }i=2n;\\
2 & \text{ if }i=2n+1.
\end{cases}
\end{align*} 
See~\ref{fig_D2n_Bn} in Appendix~\ref{appendix_actions_on_Dynkin_diagrams}. 

\begin{lemma}\label{lemma:no bigons in affine D2n yielding twisted affine Bn}
For each $i,j\in[2n+1]$, we have
\[
b_{i,j}b_{i,\tau(j)}\ge 0.
\]
\end{lemma}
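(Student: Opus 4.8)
The plan is to follow the scheme used for the other twisted $\exdynD$-cases, in particular Lemma~\ref{lemma:no bigons in affine Dn yielding twisted affine Cn-2}: by Lemma~\ref{lemma:no monogons in affine D} condition~\ref{def_admissible_3} already holds, so only the inequality $b_{i,j}b_{i,\tau(j)}\ge 0$ has to be proved. The unique $\tau$-fixed vertex is $n+1$ (from the prescribed $\tau$, the equation $\tau(i)=i$ forces $2n+2-i=i$). Hence the easy reductions: if $i=n+1$ then $b_{n+1,j}=b_{n+1,\tau(j)}$ by $G$-invariance, so the product equals $b_{n+1,j}^{2}\ge 0$; if $j=n+1$ then $\tau(j)=j$ and the product is again a square; and if $i,j$ lie in a common $\tau$-orbit then $b_{i,j}=0$ by Lemma~\ref{lemma:no monogons in affine D}. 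So I may assume $i,j\ne n+1$ lie in distinct orbits and, for contradiction, that $b_{i,j}b_{i,\tau(j)}<0$. Exactly as in the earlier lemmas, $i,\tau(i),j,\tau(j)$ are then four distinct vertices, the diagonals $b_{i,\tau(i)}=b_{j,\tau(j)}=0$ vanish by Lemma~\ref{lemma:no monogons in affine D}, and $G$-invariance forces $\quiver|_{\{i,\tau(i),j,\tau(j)\}}$ to be a directed $4$-cycle $i\to\tau(j)\to\tau(i)\to j\to i$; thus $\{i,\tau(i)\}$ and $\{j,\tau(j)\}$ are the two antipodal pairs of this cycle.

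The first regime I would treat is when the fixed vertex $n+1$ is joined to the $4$-cycle. Being $\tau$-fixed, $n+1$ satisfies $b_{n+1,v}=b_{n+1,\tau(v)}$ for every cycle vertex $v$, so it is joined to each antipodal pair with a common multiplicity; if that multiplicity were $\ge 2$ for some pair, then $\quiver|_{\{v,\tau(v),n+1\}}$ would be a linear quiver of type $(a,a)$ with $a\ge 2$ (the two vertices of an antipodal pair are non-adjacent in the $4$-cycle), which is mutation-infinite --- impossible. Hence all such multiplicities equal $1$ and, up to the mutation $\mutation_{n+1}$, the subquiver $\quiver|_{\{i,\tau(i),j,\tau(j),n+1\}}$ is one of the three five-vertex ``square with handle(s)'' quivers appearing in~\eqref{equation:squares}; by \eqref{equation:square with one diagonal}, \eqref{equation:square with two diagonals} and \eqref{equation:square with two diagonals 2} it reduces to a mutation-infinite quiver or to $\quiver(2\exdynA_1)$, contradicting Corollary~\ref{corollary:seed restriction}.

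It remains to rule out the case in which no $\tau$-fixed vertex is adjacent to the $4$-cycle. Here I would argue as in the proof of Lemma~\ref{lemma:no bigons in affine A}: by Lemma~\ref{lemma_on_facets} and $G$-invariance of $\quiver$, the quiver $\quiver$ is obtained from $\quiver(\exdynD_{2n})$ by a sequence of mutations avoiding a $\tau$-orbit $\{\ell,\tau(\ell)\}$. If this orbit is disjoint from $\{i,\tau(i),j,\tau(j)\}$, then the (connected, $\tau$-invariant) $4$-cycle must lie in a $\tau$-invariant connected component of $\quiver|_{[2n+1]\setminus\{\ell,\tau(\ell)\}}$, which up to mutation equivalence is a component of $\quiver(\exdynD_{2n})|_{[2n+1]\setminus\{\ell,\tau(\ell)\}}$; inspecting the affine $\exdynD_{2n}$ diagram, deleting any $\tau$-orbit leaves at most one $\tau$-invariant component and it is a sub-path through $n+1$, i.e.\ a quiver of type $\dynA$ (or the one-vertex/empty quiver), which has no directed $4$-cycle by~\cite[Proposition~2.4]{BuanVatne08} --- a contradiction. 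In the remaining sub-case $\{\ell,\tau(\ell)\}\in\{\{i,\tau(i)\},\{j,\tau(j)\}\}$ the restriction above no longer detects the cycle, and I would fall back on connectivity: pick a non-fixed vertex $\ell'$ adjacent to the $4$-cycle, record (via $G$-invariance) how $\ell'$ and $\tau(\ell')$ attach to the antipodal pairs, and reduce the resulting five- or six-vertex subquiver to a mutation-infinite quiver (a multiple edge, an acyclic triangle with $abc\ge 2$, or a complete $4$-graph barred by Corollary~\ref{corollary:complete 4-graph}) or to $\quiver(2\exdynA_1)$ by short mutation sequences in the spirit of \eqref{equation:square with one diagonal}--\eqref{equation:square with two diagonals 2}, again contradicting Corollary~\ref{corollary:seed restriction}. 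I expect this last step to be the main obstacle: a finite but delicate enumeration of the $G$-invariant attachment patterns of an extra vertex or orbit to the directed $4$-cycle, together with a check that each collapses --- the same kind of bookkeeping carried out in Sections~\ref{section_E6_E6_2} and~\ref{section_E7_F4}.
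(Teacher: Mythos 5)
Your easy reductions, your ``first regime'' (the fixed vertex $n+1$ adjacent to the $4$-cycle, reduced to the quivers of \eqref{equation:squares}), and your treatment of the sub-case where the omitted orbit $\{\ell,\tau(\ell)\}$ from Lemma~\ref{lemma_on_facets} is disjoint from the cycle are all sound and close in spirit to the paper's argument. The gap is in the last sub-case, which you yourself flag as ``the main obstacle'': when $n+1$ is not adjacent to the cycle and $\{\ell,\tau(\ell)\}$ is one of the antipodal pairs. Your plan there --- attach one non-fixed orbit $\{\ell',\tau(\ell')\}$ to the $4$-cycle and show every invariant attachment pattern collapses to a mutation-infinite quiver or to $\quiver(2\exdynA_1)$ --- cannot work as stated, because it is a purely local analysis and the relevant local configurations are \emph{not} all contradictory. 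For instance, a directed $4$-cycle with a pendant vertex at $i$ and its $\tau$-image pendant at $\tau(i)$ is block-decomposable and mutation-finite, and it is a perfectly legal induced subquiver of a type-$\exdynD$ quiver; no amount of mutating a five- or six-vertex restriction will produce a contradiction from it. The quivers of \eqref{equation:squares} are contradictory precisely because a \emph{single} vertex is attached $\tau$-symmetrically to both members of an antipodal pair, and only the fixed vertex $n+1$ can play that role.

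The paper closes this gap with a global argument that your sketch is missing: it assumes the offending $4$-cycle is the one \emph{closest} to $n+1$ in the undirected distance, takes a shortest path $n+1=i_0,i_1,\dots,i_{M+1}$ from $n+1$ to the cycle together with its $\tau$-image, shows that the restriction to the union $R$ of the two paths is a $\Z/2\Z$-admissible quiver whose underlying graph is a path of type $\dynA_{2M+1}$ (using that $\quiver|_{[2n+1]\setminus\{\ell,\tau(\ell)\}}$ is of type $\dynA_{2n-1}$ when $\ell\in\{2,3,2n,2n+1\}$), and then performs orbit mutations along this path to transport $n+1$ until it is directly joined to both $i$ and $\tau(i)$ --- at which point one lands in \eqref{equation:squares} and invokes Corollary~\ref{corollary:seed restriction}. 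The minimality of the cycle's distance to $n+1$ is what guarantees these orbit mutations do not destroy the cycle along the way. Without some version of this transport-to-the-fixed-vertex step, your enumeration of attachment patterns of a two-element orbit has no contradiction to find, so the proof is incomplete at exactly the point where this lemma differs from the easier $\exdynD$ cases.
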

\begin{proof}
If $i=n+1$ or $j=n+1$, then there is nothing to prove since
\[
b_{n+1,j} = b_{\tau(n+1),\tau(j)} = b_{n+1, \tau(j)}\quad\text{ and }\quad
b_{i,n+1} = b_{i, \tau(n+1)}.
\]
Suppose that $b_{i,j}b_{i,\tau(j)}<0$ for $i,j\in[2n+1]\setminus\{n+1\}$. Then 
we may assume that $i,j<n+1$ and 
\[
b_{\tau(i),j}=b_{i,\tau(j)}<0<b_{i,j}=b_{\tau(i),\tau(j)}.
\]
Therefore the restriction $\quiver|_{\{i,j,\tau(i),\tau(j)\}}$ is a directed 
cycle of length 4. We furthermore assume that this cycle is the closest one to 
the vertex $n+1$ with respect to the length of undirected edge-path. In other 
words, for any vertex $p$ closer than $i$ and $j$ from $n+1$, we have 
$b_{p,q}b_{p,\tau(q)}\ge0$ for all $q\in[2n+1]$.

On the other hand, there is a sequence of mutations
\[
\quiver = 
(\mutation_{j_L}^{\exdynD_{2n}}\cdots\mutation_{j_1}^{\exdynD_{2n}})(\quiver(\exdynD_{2n})),
\]
where the sequence $j_1,\dots, j_L$ misses at least one vertex $\ell\in[2n+1]$ 
and so $\tau(\ell)$ as well by Lemma~\ref{lemma_on_facets}. 

We consider subquivers separated by $\ell,\tau(\ell)$ and observe 
that in $\quiver$, there are no edges between pieces separated by 
$\{\ell,\tau(\ell)\}$. Suppose that $\ell=n+1$. Then either $i,j$ or 
$i,\tau(j)$ are contained in different subquivers separated by $n+1$. However 
since we are assuming $b_{i,j}b_{i,\tau(j)}<0$, this is a contradiction and so 
we may assume that $\ell\neq n+1$. Then among separated quivers, there exists a 
central piece $\quiver'$ containing $n+1$, which is $\Z/2\Z$-invariant. 
Moreover, $\quiver'$ is of type~$\dynA_m$ for some $m<2n$.

\smallskip
\noindent\textbf{Claim (i)} We claim that $\ell\in\{i,j,\tau(i),\tau(j)\}$. 
Assume on the contrary that $\ell\not\in\{i,j,\tau(i), \tau(j)\}$.
Then, by the observation above, four vertices $i,j,\tau(i), \tau(j)$ are 
contained in the central piece $\quiver'$. This yields a contradiction since 
any $\Z/2\Z$-invariant quiver of type~$\dynA_m$ is $\Z/2\Z$-admissible. 
Therefore, we get $\ell\in\{i,j,\tau(i),\tau(j)\}$.
\smallskip

\noindent \textbf{Claim (ii)} We claim that $\ell\in\{2,3,2n,2n+1\}$. 
Assume on the contrary that $\ell\not\in\{2,3,2n,2n+1\}$. 
Then the restriction 
$\quiver''$ containing vertices of $\quiver'$ and $\{\ell,\tau(\ell)\}$ is 
$\Z/2\Z$-admissible of type~$\dynA_{2m+2}$. Then by the observation above, 
$i,j,\tau(i),\tau(j)$ are contained in $\quiver''$. This yields a contradiction 
again and therefore we obtain $\ell\in\{2,3,2n,2n+1\}$.

\smallskip

Because of the above two claims (i) and (ii), we may assume that 
\[
\ell\in\{i,j,\tau(i),\tau(j)\}\cap\{2,3,2n,2n+1\}.
\]
Let $i_0=n+1,i_1,\dots,i_{M+1}$ be a sequence of vertices which gives us a 
shortest (undirected) path from $n+1$ to the cycle $\quiver|_{\{i,j,\tau(i),\tau(j)\}}$. 
That is, $i_{M+1}\in\{i,j,\tau(i),\tau(j)\}$. Then we also have another 
shortest (undirected) path given by a sequence of vertices $\tau(i_{M+1}),\dots,\tau(i_1), 
n+1$ from the cycle $\quiver|_{\{i,j,\tau(i),\tau(j)\}}$ to the vertex $n+1$.

Notice that the set 
\[
R=\{\tau(i_M),\dots, \tau(i_1), n+1, i_1,\dots, i_M\}\subset[2n+1]
\]
misses $\{i,j,\tau(i),\tau(j)\}$.

We claim that the restriction $\quiver|_{R}$ is a $\Z/2\Z$-admissible quiver of 
type $\dynA_{2M+1}$. Indeed, $\quiver|_R$ is a restriction of 
$\quiver|_{[2n+1]\setminus\{\ell,\tau(\ell)\}}$, which is of 
type~$\dynA_{2n-1}$. Since any connected subquiver of a quiver mutation 
equivalent 
to $\dynA$ is again of type~$\dynA$, we proved the claim.

Because we take the shortest undirected path connecting $n+1$ and $i$, the restriction $\quiver|_{R}$ is an undirected path having $2M+1$ vertices, that is, the underlying graph of $\quiver|_{R}$ is isomorphic to the Dynkin diagram of type $\dynA_{2M+1}$.

Now consider the quiver $\quiver|_{R\cup\{i,j,\tau(i),\tau(j)\}}$
which is  $\Z/2\Z$-invariant and looks like the left picture 
below. 
Finally, by a 
sequence of orbit mutations, the vertex $n+1$ can be directly connected with both $i$ and $\tau(i)$, so $\quiver|_{R\cup\{i,j,\tau(i),\tau(j)\}}$ can be reduced to one of the quivers in~\eqref{equation:squares} 
as displayed in the right picture below.
This contradiction completes the proof.
\[
\begin{tikzpicture}[baseline=-.5ex,scale=2]
\begin{scope}
\draw[fill] (0,0) node (A1) {} circle(1pt)
(135:0.5) node (A3) {} circle (1pt)
(-45:0.5) node (A6) {} circle (1pt)
(0,1) node (A4) {} circle (1pt)
(-1,0) node (A5) {} circle (1pt) 
(0,-1) node (A7) {} circle (1pt) 
(1,0) node (A8) {} circle (1pt);
\draw[->] (A4) to[out=180,in=90] (A5) node[left] {$j$};
\draw[->] (A5) to[out=-90,in=180] (A7) node[below] {$\tau(i)$};
\draw[->] (A7) to[out=0,in=-90] (A8) node[right] {$\tau(j)$};
\draw[->] (A8) to[out=90,in=0] (A4) node[above] {$i$};
\draw[dashed] (A1) node [above right] {$n+1$} -- (A3);
\draw[dashed] (A1) -- (A6);
\draw[-] (A3) node[above left] {$i_M$} -- (A4);
\draw[-] (A6) node[below right] {$\tau(i_M)$} -- (A7);
\end{scope}
\begin{scope}[xshift=4cm]
\draw[fill, lightgray]
(135:0.5) node (A3) {} circle (1pt)
(-45:0.5) node (A6) {} circle (1pt);
\draw[fill] (0,0) node (A1) {} circle(1pt)
(0,1) node (A4) {} circle (1pt)
(-1,0) node (A5) {} circle (1pt) 
(0,-1) node (A7) {} circle (1pt) 
(1,0) node (A8) {} circle (1pt);
\draw[->] (A4) to[out=180,in=90] (A5) node[left] {$j$};
\draw[->] (A5) to[out=-90,in=180] (A7) node[below] {$\tau(i)$};
\draw[->] (A7) to[out=0,in=-90] (A8) node[right] {$\tau(j)$};
\draw[->] (A8) to[out=90,in=0] (A4) node[above] {$i$};
\draw[dotted, lightgray] (A3) node[above left] {$i_M$};
\draw[dotted, lightgray] (A6) node[below right] {$\tau(i_M)$};
\draw[->] (A1) node [above right] {$n+1$} -- (A4);
\draw[->] (A1) -- (A7);
\end{scope}
\end{tikzpicture}\qedhere
\]
\end{proof}

\subsubsection{\texorpdfstring{$(\dynX,G,\dynY)=(\exdynD_{2n}, \Z/2\Z\times\Z/2\Z, \dynA_{2n-2}^{(2)})$}{(X, G, Y)=(affine D2n, Z/2Z x Z/2Z, affine A 2n-2 2)}}
\label{section_D2n_A2n-2_2}

The actions of two generators $\tau_1, \tau_2\in\Z/2\Z$ are as follows:
\begin{align*}
\tau_1(i) &=\begin{cases}
i & \text{ if }i\le n-1;\\
n+1 & \text{ if }i=n;\\
n & \text{ if }i=n+1,
\end{cases}&
\tau_2(i) &=\begin{cases}
i & \text{ if }i=1,4,\dots, 2n-1;\\
3 & \text{ if }i=2;\\
2 & \text{ if }i=3;\\
2n+1 & \text{ if }i=2n;\\
2n & \text{ if }i=2n+1.
\end{cases}
\end{align*}
See~\ref{fig_D2n_A2n-2_2} in 
Appendix~\ref{appendix_actions_on_Dynkin_diagrams}.

\begin{lemma}\label{lemma:no bigons in affine D2n yielding twisted affine A2n-2}
For each $i,j\in[2n+1]$ and $g\in\Z/2\Z\times\Z/2\Z$, we have
\[
b_{i,j}b_{i,g(j)}\ge 0.
\]
\end{lemma}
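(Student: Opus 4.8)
The plan is to deduce the inequality $b_{i,j}b_{i,g(j)}\ge 0$ for every $g\in\Z/2\Z\times\Z/2\Z$ from the two $\Z/2\Z$-subcases already settled in \S\ref{section_Dn_Cn-2} and \S\ref{section_Dn_A2n-1_2}. Write $\Z/2\Z\times\Z/2\Z=\{e,\tau_1,\tau_2,\tau_1\tau_2\}$. The case $g=e$ is immediate since $b_{i,j}b_{i,j}=b_{i,j}^2\ge 0$, so it is enough to treat $g\in\{\tau_1,\tau_2,\tau_1\tau_2\}$, and together with Lemma~\ref{lemma:no monogons in affine D} this will give the full admissibility of a $(\Z/2\Z\times\Z/2\Z)$-invariant quiver $\quiver$ of type~$\exdynD_{2n}$.

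Since such a $\quiver$ is in particular invariant under each of the cyclic subgroups $\langle\tau_1\rangle$ and $\langle\tau_2\rangle$, the first step is to recognise each single-generator action on an $\exdynD_{2n}$-quiver as one of the $\Z/2\Z$-actions on an $\exdynD_m$-quiver already analysed. Here $\tau_2$ interchanges the two pairs $\{2,3\}$ and $\{2n,2n+1\}$ simultaneously and fixes every other vertex, so it is literally the action of \S\ref{section_Dn_Cn-2} with $n$ replaced by $2n$; and $\tau_1$ interchanges a single such pair while fixing the rest, so up to a relabeling of $[2n+1]$ it is the action of \S\ref{section_Dn_A2n-1_2} (again with $n$ replaced by $2n$). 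As Lemmas~\ref{lemma:no bigons in affine Dn yielding twisted affine Cn-2} and~\ref{lemma:no bigons in affine Dn yielding twisted affine A2n-1} are stated for an arbitrary $\exdynD_m$, they apply directly and yield $b_{i,j}b_{i,\tau_2(j)}\ge 0$ and $b_{i,j}b_{i,\tau_1(j)}\ge 0$ for all $i,j\in[2n+1]$.

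For the remaining element $g=\tau_1\tau_2$ I would argue that no new work is needed. Writing $\tau_1\tau_2(j)=\tau_1(\tau_2(j))$, the key observation is that for every vertex $j$ one has $\tau_1\tau_2(j)\in\{\tau_1(j),\tau_2(j)\}$: if $\tau_2$ fixes $j$ then $\tau_1\tau_2(j)=\tau_1(j)$, while if $\tau_1$ fixes $\tau_2(j)$ then $\tau_1\tau_2(j)=\tau_2(j)$; and for $2n\ge 8$ one of these alternatives always holds because the set of vertices moved by $\tau_1$, namely $\{n,n+1\}$, is disjoint from the set moved by $\tau_2$, namely $\{2,3,2n,2n+1\}$, so $\tau_2(j)$ can lie in the support of $\tau_1$ only when $j$ does, in which case $\tau_2$ already fixes $j$. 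Hence $b_{i,j}b_{i,\tau_1\tau_2(j)}$ coincides with one of $b_{i,j}b_{i,\tau_1(j)}$ or $b_{i,j}b_{i,\tau_2(j)}$, both of which have just been shown to be nonnegative.

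The two points I expect to require care are the following. First, the identification in the second paragraph must be made precise: one should check that the relabeling carrying $\langle\tau_1\rangle$ to the model action of \S\ref{section_Dn_A2n-1_2} genuinely preserves the property of being ``of type~$\exdynD_{2n}$'', so that the cited lemma applies without loss. Second, the smallest diagram $\exdynD_6$ is exceptional, since there the supports of $\tau_1$ and $\tau_2$ overlap (at the single vertex $3$) and the clean dichotomy $\tau_1\tau_2(j)\in\{\tau_1(j),\tau_2(j)\}$ can fail; for that one diagram I would fall back on a direct mutation analysis of the finitely many relevant subquiver configurations, exactly in the style of the earlier subsections, or invoke the SageMath verification mentioned after Theorem~\ref{thm_invaraint_implies_admissible}. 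An alternative to the whole argument would be to mimic the direct proof of Lemma~\ref{lemma:no bigons in affine D2n yielding twisted affine Bn}, tracking how a hypothetical directed $4$-cycle $\quiver|_{\{i,j,g(i),g(j)\}}$ is forced to sit inside a type-$\dynA$ restriction produced by Lemma~\ref{lemma_on_facets}; the reduction above is shorter but the two approaches are interchangeable.
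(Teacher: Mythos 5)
Your overall strategy---reducing each nontrivial element of $\Z/2\Z\times\Z/2\Z$ to a $\Z/2\Z$-case that has already been settled---is the same as the paper's, and your treatment of $\tau_2$ is sound: that generator is exactly the action of \S\ref{section_Dn_Cn-2} applied to $\exdynD_{2n}$, so Lemma~\ref{lemma:no bigons in affine Dn yielding twisted affine Cn-2} gives $b_{i,j}b_{i,\tau_2(j)}\ge 0$. (The paper instead handles $\tau_2$ directly: invariance when $i$ or $j$ is $\tau_2$-fixed, and $b_{i,j}=0$ via Lemma~\ref{lemma:no monogons in affine D} when both $i,j$ lie in $\{2,3,2n,2n+1\}$; either route is fine.)

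The genuine gap is your identification of $\tau_1$. For the folding of $\exdynD_{2n}$ to produce $\dynA_{2n-2}^{(2)}$, all four fork tips $\{2,3,2n,2n+1\}$ must form a \emph{single} orbit and the resulting diagram must have $n$ vertices; this forces $\tau_1$ to be the left--right symmetry of $\exdynD_{2n}$ (pairing $1$ with $2n-1$, $i$ with $2n+2-i$, and the left tips with the right tips), as drawn in \ref{fig_D2n_A2n-2_2} and as used in the paper's own proof. The displayed formula for $\tau_1$ in \S\ref{section_D2n_A2n-2_2} is an index slip; read literally it transposes the adjacent chain vertices $n,n+1$, which is not even a diagram automorphism (invariance would force $b_{n,n+1}=0$). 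With the correct $\tau_1$, both of your remaining steps fail: (a) $\tau_1$ is not ``up to relabeling'' the single-pair swap of \S\ref{section_Dn_A2n-1_2}, so Lemma~\ref{lemma:no bigons in affine Dn yielding twisted affine A2n-1} is the wrong citation---the relevant result is Lemma~\ref{lemma:no bigons in affine D2n yielding twisted affine Bn}; and (b) the supports of $\tau_1$ and $\tau_2$ are \emph{not} disjoint (both move $2,3,2n,2n+1$), so the dichotomy $\tau_1\tau_2(j)\in\{\tau_1(j),\tau_2(j)\}$ breaks, e.g.\ $\tau_1\tau_2(2)=2n$ while $\tau_1(2)=2n+1$ and $\tau_2(2)=3$. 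The repair is exactly the paper's argument: $\tau_1$ and $\tau_1\tau_2$ are the two left--right symmetries of $\exdynD_{2n}$, each generating an action isomorphic to the one of \S\ref{section_D2n_Bn}, so Lemma~\ref{lemma:no bigons in affine D2n yielding twisted affine Bn} disposes of both at once, leaving only $g=\tau_2$ to check. Your proposed fallback to a SageMath check for the small case is not a proof and should be replaced by this reduction.
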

\begin{proof}
Since $\quiver$ is already $\Z/2\Z=\langle \tau_1\rangle$-invariant and 
$\tau_1$ and $\tau_1\tau_2$ generate isomorphic actions on $\exdynD_{2n}$, the 
only thing to check is for $g=\tau_2$ by Lemma~\ref{lemma:no bigons in affine 
D2n yielding twisted affine Bn}.

If $i\not\in\{2,3,2n,2n+1\}$ or $j\not\in\{2,3,2n,2n+1\}$, then there is 
nothing to prove since 
\[
b_{i,j}=b_{\tau_2(i),\tau_2(j)} = 
\begin{cases}
b_{i,\tau_2(j)} & \text{ if }i\not\in\{2,3,2n,2n+1\};\\
b_{\tau_2(i),j} = b_{\tau_2^2(i),\tau_2(j)} = b_{i,\tau_2(j)} & 
\text{ if }j\not\in\{2,3,2n,2n+1\}.
\end{cases}
\]
Otherwise, if $i,j\in\{2,3,2n,2n+1\}$, then $b_{i,j}=0$ by Lemma~\ref{lemma:no 
monogons in affine D} and we are done.
\end{proof}

\begin{proof}[Proof of Theorem~\ref{thm_invaraint_implies_admissible} for 
$\dynX = \exdynD$]
For a $G$-invariant quiver $\quiver$ of type~$\exdynD_n$ with $G=\Z/2\Z,\Z/3\Z$ 
or $\Z/2\Z\times\Z/2\Z$, the condition \ref{def_admissible_3} in 
Definition~\ref{definition:admissible quiver}(2) follows from 
Lemma~\ref{lemma:no monogons in affine D} and the condition 
\ref{def_admissible_4} follows from 
Lemmas~\ref{lemma:no bigons in affine D4},
\ref{lemma:no bigons in affine Dn yielding twisted affine Cn-2},
\ref{lemma:no bigons in affine Dn yielding twisted affine A2n-1},
\ref{lemma:no bigons in affine D2n yielding twisted affine Bn}, and
\ref{lemma:no bigons in affine D2n yielding twisted affine A2n-2}.
Therefore $\quiver$ is $G$-admissible as 
claimed.
\end{proof}

\section{Connections with cluster algebras: folded cluster patterns}
\label{section_connections_with_CA}
Under certain conditions, one can \textit{fold} cluster patterns to produce new 
ones. This procedure is used to study cluster algebras of non-simply-laced 
affine type from those of simply-laced affine type (see Table~\ref{table:all 
possible foldings}). 
In this section, we observe the properties of folded cluster patterns of 
non-simply-laced affine type in 
Corollary~\ref{cor_invariant_seeds_form_folded_pattern}.

For a globally foldable quiver $\quiver$ on $[n]$ with respect to $G$-action, we can fold all the seeds in the corresponding 
cluster pattern.
Let $\field^G$ be the field of rational functions in $\#([n]/G)$ independent 
variables and $\psi \colon \field \to \field^G$ be a surjective homomorphism. 
A seed $\seed = (\mathbf{x}, \quiver)$ is called \emph{$(G, 
\psi)$-invariant} (respectively, \emph{$(G,\psi)$-admissible}) if 
\begin{itemize}
\item for any $i \sim i'$, we have $\psi(x_i) = \psi(x_{i'})$;
\item $\quiver$ is $G$-invariant (respectively, $G$-admissible).
\end{itemize}
In this situation, we define a new ``folded'' seed $\seed^G = (\bfx^G, 
(\qbasis(\quiver))^G)$ in $\field^G$ whose exchange matrix is given as before 
and cluster 
variables $\bfx^G = (x_I)$ are indexed by the $G$-orbits and given by $x_I = 
\psi(x_i)$.

\begin{proposition}[{\cite[Corollary~4.4.11]{FWZ_chapter45}}]\label{proposition:folded cluster pattern}
Let $\quiver$ be a quiver on $[n]$ which is globally foldable with respect to a group 
$G$ acting on $[n]$. Let $\initialseed = (\mathbf{x}, 
\quiver)$ be a seed in the field $\field$ of rational functions freely 
generated by a cluster $\mathbf{x} = (x_1,\dots,x_n)$. Define $\psi \colon 
\field  \to \field^G$ so that $\initialseed$ is a $(G, \psi)$-admissible seed. 
Then, for any $G$-orbits $I_1,\dots,I_\ell$, the seed 
$(\mutation_{I_\ell} \dots \mutation_{I_1})(\initialseed)$ is $(G, 
\psi)$-admissible, and moreover, the folded seeds $((\mutation_{I_\ell} \dots 
\mutation_{I_1})(\initialseed))^G$ form a cluster pattern in $\field^G$ with the 
initial seed $\initialseed^G=(\bfx^G, (\qbasis(\quiver))^G)$.
\end{proposition}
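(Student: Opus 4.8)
The plan is to reduce the whole statement to a single orbit mutation and then propagate along the tree $\mathbb{T}_m$, where $m=\#([n]/G)$. Observe first that the quiver-level assertions are already in hand: by definition, $\quiver$ being globally foldable with respect to $G$ means that $\quiver$ and every $(\mutation_{I_\ell}\cdots\mutation_{I_1})(\quiver)$ is $G$-admissible, and for a $G$-admissible quiver the identity $(\mutation_I(\qbasis))^G=\mutation_I(\qbasis^G)$ for the exchange matrices was recorded above. So the only genuinely new content is the behaviour of the cluster variables under orbit mutation, and I would package it as the following \emph{one-step lemma}: if $\seed=(\bfx,\quiver)$ is $(G,\psi)$-admissible and $I$ is a $G$-orbit with $\mutation_I(\quiver)$ again $G$-admissible, then $\mutation_I(\seed)$ is $(G,\psi)$-admissible and $(\mutation_I(\seed))^G=\mutation_I(\seed^G)$.

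To prove the one-step lemma I would first note that the vertices of $I$ are pairwise non-adjacent by admissibility condition~\ref{def_admissible_3}, so the mutations $\mutation_k$ ($k\in I$) are commuting involutions and, for each $k\in I$, mutating at the remaining vertices of $I$ changes neither the $k$-th column of $\qbasis$ nor the variables $x_j$ with $b_{j,k}\neq 0$ (all such $j$ lie outside $I$). Hence $\mutation_I(\seed).x_k=x_k^{-1}\bigl(\prod_{b_{j,k}>0}x_j^{\,b_{j,k}}+\prod_{b_{j,k}<0}x_j^{\,-b_{j,k}}\bigr)$, computed from the original seed. Applying $\psi$, the right-hand side depends only on the values $\psi(x_j)$; choosing $g\in G$ with $g(k)=k'$ and using $G$-invariance of $\quiver$ ($b_{j,k'}=b_{g^{-1}(j),k}$) together with $\psi(x_j)=\psi(x_{g^{-1}(j)})$ yields $\psi(\mutation_I(\seed).x_{k'})=\psi(\mutation_I(\seed).x_k)$, so $\mutation_I(\seed)$ is still $(G,\psi)$-invariant, and it is $G$-admissible by hypothesis. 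For the equality $(\mutation_I(\seed))^G=\mutation_I(\seed^G)$ I would group the two products over $G$-orbits: admissibility condition~\ref{def_admissible_4} forces all integers $b_{j,k}$ with $j$ in a fixed orbit $J$ to share one sign, so $\sum_{j\in J}b_{j,k}$ equals $b^G_{J,I}$ and is exactly the exponent with which $x_J=\psi(x_j)$ enters the positive or negative product; thus $\psi(\mutation_I(\seed).x_k)$ is the exchanged variable of $\mutation_I(\seed^G)$ at the index $I$, and combining this with the exchange-matrix identity finishes the lemma.

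With the one-step lemma available, the first assertion of the proposition follows by induction on $\ell$, the hypothesis that each intermediate quiver is $G$-admissible being supplied by global foldability. For the second assertion I would identify the edge-labels of $\mathbb{T}_m$ with the $G$-orbits, fix a base vertex, and assign to a vertex $\bar t$ the folding of $(\mutation_{I_\ell}\cdots\mutation_{I_1})(\initialseed)$, where $I_1,\dots,I_\ell$ reads off the path from the base vertex to $\bar t$; this is well defined because backtracking edges cancel ($\mutation_I$ being an involution), and for an edge of $\mathbb{T}_m$ with endpoints $\bar t,\bar t'$ and label $I$ the one-step lemma gives $\mutation_I(\seed_{\bar t})=\seed_{\bar t'}$, so the assignment is a cluster pattern. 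Finally $\initialseed^G=(\bfx^G,\qbasis^G)$ is a genuine seed: surjectivity of $\psi$ makes $\bfx^G$ a generating set of $\field^G$, and $m=\#([n]/G)$ generators of the transcendence-degree-$m$ field $\field^G$ are automatically algebraically independent.

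The step I expect to demand the most care, and the only place where $G$-admissibility is really used, is the orbit-regrouping inside the one-step lemma: condition~\ref{def_admissible_3} is what makes $\mutation_I$ well defined and column-stable, while condition~\ref{def_admissible_4} (uniform sign of $b_{j,k}$ along each orbit) is precisely what turns $\sum_{j\in J}b_{j,k}$ into the positive- or negative-part exponent $\pm b^G_{J,I}$, so that folding intertwines mutation of both the matrix and the variables simultaneously. Once this commutation is in place, the remainder is routine induction and bookkeeping on the tree $\mathbb{T}_m$.
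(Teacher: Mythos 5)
Your proof is correct, and it is essentially the standard argument: the paper itself gives no proof of this proposition but imports it from \cite[Corollary~4.4.11]{FWZ_chapter45}, whose proof is exactly your one-step commutation lemma (condition~\ref{def_admissible_3} making $\mutation_I$ a product of commuting, column-stable mutations, and condition~\ref{def_admissible_4} turning the orbit-grouped exponents into $\pm b^G_{J,I}$), followed by induction along $\mathbb{T}_{\#([n]/G)}$. The only detail you leave implicit is that $\qbasis^G$ is skew-symmetrizable (take $d_I$ proportional to $1/|I|$, using $G$-invariance to see $|J|\,b^G_{I,J}=-|I|\,b^G_{J,I}$), which is needed for $\initialseed^G$ to be a seed but is routine.
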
	

\begin{proposition}\label{prop_admissible_seeds_form_folded_seed_pattern}
Let $\quiver$ be an acyclic quiver on $[n]$ which is globally foldable with respect to a 
group $G$ acting on $[n]$. Let $\initialseed = (\mathbf{x}, 
\quiver)$ be a seed in the field $\field$ of rational functions freely 
generated by a cluster $\mathbf{x} = (x_1,\dots,x_n)$. Define $\psi \colon 
\field  \to \field^G$ so that $\initialseed$ is a $(G, \psi)$-admissible seed. 
Then, the set of $(G,\psi)$-admissible seeds are connected via orbit mutations. 
Indeed, the set of $(G,\psi)$-admissible seeds forms a cluster pattern with the 
initial seed $\initialseed^G$.
\end{proposition}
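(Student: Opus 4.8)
The plan is to bootstrap from Proposition~\ref{proposition:folded cluster pattern}. That result already supplies one inclusion: every seed obtained from $\initialseed$ by a sequence of orbit mutations is $(G,\psi)$-admissible, and the foldings of these seeds assemble into a cluster pattern, call it $\mathcal{P}^G$, in $\field^G$ with initial seed $\initialseed^G=(\bfx^G,\qbasis(\quiver)^G)$. Since by hypothesis $\quiver$ is acyclic of affine type, its underlying graph is the Dynkin diagram of the corresponding $\dynX$, and one reads off from Table~\ref{table:all possible foldings} that $\qbasis(\quiver)^G$ is an acyclic exchange matrix of affine type~$\dynY$ (every $\dynY$ occurring there has a tree as Dynkin diagram, hence is acyclic). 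Thus $\mathcal{P}^G$ is a cluster pattern whose initial seed is acyclic of affine type, so Lemma~\ref{lemma_on_facets} is available both for $\mathcal{P}=\{\seed_t\}_{t\in\mathbb{T}_n}$ (type~$\dynX$) and for $\mathcal{P}^G$ (type~$\dynY$). It therefore remains to establish the reverse inclusion: every $(G,\psi)$-admissible seed $\seed$ occurring in $\mathcal{P}$ is reachable from $\initialseed$ by orbit mutations. Granting this, connectivity follows at once because each orbit mutation is an involution, and the identification of the set of $(G,\psi)$-admissible seeds with the cluster pattern of $\initialseed^G$ is then precisely the statement of Proposition~\ref{proposition:folded cluster pattern}.

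The engine for the reverse inclusion is an elementary observation: if $\seed'$ and $\mutation_k(\seed')$ are \emph{both} $(G,\psi)$-admissible, then $k$ is fixed by $G$, so $\mutation_k$ is already the orbit mutation on the one-element orbit $\{k\}$. Indeed, $G$-invariance of $\mutation_k(\quiver_{\seed'})$ forces $\mutation_{g(k)}(\quiver_{\seed'})=\mutation_k(\quiver_{\seed'})$ for all $g\in G$; since the $(k,g(k))$-entry of $\qbasis(\quiver_{\seed'})$ vanishes by $G$-admissibility, mutation at $g(k)$ leaves the arrows incident to $k$ unchanged while mutation at $k$ reverses all of them, and as $\quiver_{\seed'}$ is connected with at least two vertices this is only consistent when $g(k)=k$. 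With this in hand I would argue by induction on the length of a shortest mutation sequence from $\initialseed$ to $\seed$: along a suitable such sequence, locate the last $(G,\psi)$-admissible seed $\seed''$ preceding $\seed$; the passage from $\seed''$ to $\seed$ is realized by a single orbit mutation, while $\seed''$ is admissible and strictly closer to $\initialseed$, hence orbit-reachable by the inductive hypothesis. The delicate point is that along a generic geodesic in $\mathbb{T}_n$ the intermediate seeds between two admissible ones need not themselves be admissible, so ``the previous admissible seed'' is not visible as a neighbour in $\mathbb{T}_n$; one resolves this by replacing the geodesic with the avoided-vertex presentation furnished by Lemma~\ref{lemma_on_facets} for $\mathcal{P}$, and matching, via Lemma~\ref{lemma_on_facets} for $\mathcal{P}^G$, the admissible seeds met along the way with facets of the folded pattern $\mathcal{P}^G$.

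The main obstacle is exactly this rerouting: proving that from any $(G,\psi)$-admissible seed other than $\initialseed$ one can retreat, by a single orbit mutation, to a strictly closer $(G,\psi)$-admissible seed. For general globally foldable quivers this can fail — it is the phenomenon illustrated by Example~\ref{example_non_admissible} and the reason Dupont's Problem~9.5 is nontrivial — and the acyclicity together with the affine type, entering through the facet structure of Lemma~\ref{lemma_on_facets}, is precisely what excludes it here. Everything else is formal: the single-mutation observation upgrades each retreat to an orbit mutation, the induction then exhibits $\seed$ in the form $(\mutation_{I_\ell}\cdots\mutation_{I_1})(\initialseed)$, and Proposition~\ref{proposition:folded cluster pattern} packages the resulting set of $(G,\psi)$-admissible seeds as the folded cluster pattern with initial seed $\initialseed^G$.
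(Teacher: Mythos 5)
Your overall architecture --- take the orbit-reachable seeds supplied by Proposition~\ref{proposition:folded cluster pattern} and then show every $(G,\psi)$-admissible seed is orbit-reachable --- matches the paper's, and your preliminary observation is correct: if $\seed'$ and $\mutation_k(\seed')$ are both admissible, then $b_{k,g(k)}=0$ makes $\mutation_{g(k)}$ fix all arrows at $k$ while $\mutation_k$ reverses them, so $\mutation_{g(k)}(\quiver_{\seed'})=\mutation_k(\quiver_{\seed'})$ forces $g(k)=k$ by connectedness. But the engine of your induction is never built. The step you yourself flag as ``the main obstacle'' --- that from any admissible seed other than $\initialseed$ one can retreat by a \emph{single orbit mutation} to an admissible seed strictly closer to $\initialseed$ --- is essentially the entire content of the proposition, and you assert it rather than prove it. Your adjacency observation does not supply it: along a geodesic the last admissible seed $\seed''$ before $\seed$ is generally not a neighbour of $\seed$ in $\mathbb{T}_n$, and nothing in Lemma~\ref{lemma_on_facets} (which, as stated, applies to acyclic quivers of affine type, not to the skew-symmetrizable folded pattern) tells you that the segment from $\seed''$ to $\seed$ collapses to one orbit mutation, nor even that an admissible seed has an admissible orbit-neighbour at smaller distance. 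You have also imported an affine-type hypothesis that the proposition does not state: it assumes only that $\quiver$ is acyclic and globally foldable.

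The paper closes the loop by a completely different, non-inductive mechanism: it shows that two $(G,\psi)$-admissible seeds with the same folded image must coincide. Concretely, if $\seed_s$ is admissible and $\psi$ carries its cluster onto the cluster of an orbit-reachable seed $\seed_t$, then positivity of the Laurent phenomenon (Gross--Hacking--Keel--Kontsevich) forbids cancellation under $\psi$ and forces the two clusters themselves to agree; the acyclicity hypothesis then enters through the theorem of Buan--Marsh--Reiten--Todorov that, in an acyclic cluster algebra, a cluster determines its seed, giving $\seed_s=\seed_t$. This uniqueness statement is where acyclicity is actually used, and it entirely sidesteps the combinatorial rerouting problem on which your argument is stuck. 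To salvage your approach you would have to prove the retreat lemma from scratch; as written, the proposal is incomplete.
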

\begin{proof}
We denote by $\mathcal{S}$ the set of $(G,\psi)$-admissible seeds in the cluster pattern obtained by $\initialseed$. Consider a subset $\mathcal{S}'$ of $\mathcal{S}$ such that each of which element is connected to the initial seed $\initialseed$ via a sequence of orbit mutations. 

To show $\mathcal{S}' = \mathcal{S}$, it is enough to prove that for each seed 
$\seed'$ in the cluster pattern of $\initialseed^G$, there exists only one 
$(G,\psi)$-admissible seed $\seed_t$ in $\mathcal{S}$ such that $\seed_t^G  = 
\seed'$. Take a seed $\seed_{t}$ in $\mathcal{S}'$ such that $\seed_{t}^G 
=\seed'$. We may assume that $\seed_t$ is the initial seed in the cluster pattern, 
that is, $\seed_t$ has cluster variables $\{x_i \mid i \in [n]\}$. Assume on 
the contrary that there exists another $(G,\psi)$-admissible seed $\seed_{s} = 
(\mathbf{x}_{s}, \qbasis_{s})$ satisfies $\seed_{s}^G = \seed'$, then 
\begin{equation}\label{eq_xi_and_xi'}
\{\psi(x_i) \mid i \in [n] \} = \{\psi(x_{i;s}) \mid i \in [n]\}.
\end{equation} 

The Positivity of Laurent phenomenon, which was conjectured 
in Fomin--Zelevinsky~\cite{FZ1_2002} and proved in Gross--Hacking--Keel--Kontsevich~\cite[Corollary~4.4.11]{GHKK18}, states that every non-zero cluster 
variable can be uniquely written by a rational polynomial whose numerator is a 
polynomial with \emph{non-negative} integer coefficients in the initial cluster 
variables $x_1,\dots,x_n$. 
Accordingly, to get~\eqref{eq_xi_and_xi'}, the 
cluster variables $x_{i;s},\dots,x_{n;s}$ should be the initial cluster 
variables $x_1,\dots,x_n$ because the non-negativity of coefficients means no 
cancellation exists. 
Since we are considering a cluster pattern whose initial seed has an acyclic 
quiver, the cluster variables determine a seed by~\cite[Theorem~4.1]{BMRT07} 
(also, see~\cite[Conjecture~4.14]{FZ03_CDM}), so we have $\seed_s = \seed_t$  
and this proves the claim. 
\end{proof}

If a seed $\seed = 
(\mathbf{x}, \quiver)$ is $(G,\psi)$-admissible, then $\seed$ is 
$(G,\psi)$-invariant by Definition~\ref{definition:admissible quiver}.
As a direct corollary of Theorem~\ref{thm_invaraint_implies_admissible} and  
Proposition~\ref{prop_admissible_seeds_form_folded_seed_pattern}, 
we obtain that the converse 
holds when we consider the foldings presented in Table~\ref{table:all possible 
foldings}.
\begin{corollary}\label{cor_invariant_seeds_form_folded_pattern}
Let $(\dynX,G,\dynY)$ be a triple given by a column of Table~\ref{table:all 
possible foldings}.
Let $\initialseed = (\mathbf x, \quiver)$ be a seed. Suppose 
that $\quiver$ is of type~$\dynX$. Define $\psi \colon \field  \to \field^G$ so 
that $\initialseed$ is a $(G, \psi)$-admissible seed. Then, any 
$(G,\psi)$-invariant seed can be reached 
by a sequence of orbit mutations from $\initialseed$. Moreover, the set of $(G,\psi)$-invariant seeds forms the `folded' cluster pattern given by $\initialseed^G$ of $\dynY$ via folding. 
\end{corollary}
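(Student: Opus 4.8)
The plan is to reduce this corollary to two ingredients already available: Theorem~\ref{thm_invaraint_implies_admissible}, which says that for the triples in Table~\ref{table:all possible foldings} a $G$-invariant quiver of type~$\dynX$ is automatically $G$-admissible (hence globally foldable, by the corollary following Theorem~\ref{thm_invaraint_implies_admissible}); and Proposition~\ref{prop_admissible_seeds_form_folded_seed_pattern}, which says that the $(G,\psi)$-admissible seeds in the cluster pattern of an acyclic globally foldable initial seed are connected via orbit mutations and form a cluster pattern isomorphic to the one generated by $\initialseed^G$. So the first step is simply to observe that the hypotheses of Proposition~\ref{prop_admissible_seeds_form_folded_seed_pattern} are met: $\quiver$ is acyclic of affine type~$\dynX$ (we are always in the standing assumption that the initial exchange matrix is acyclic of affine type), and by Theorem~\ref{thm_invaraint_implies_admissible} together with its corollary, $\quiver$ is globally foldable with respect to~$G$. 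Hence Proposition~\ref{prop_admissible_seeds_form_folded_seed_pattern} applies verbatim, and the set of $(G,\psi)$-admissible seeds forms a cluster pattern with initial seed $\initialseed^G$, which is of type~$\dynY = \dynX^G$ by construction of the folding (Table~\ref{table:all possible foldings}).

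The remaining point is to upgrade ``admissible'' to ``invariant'' in the statement. Here I would argue as follows. One inclusion is trivial: every $(G,\psi)$-admissible seed is $(G,\psi)$-invariant, since $G$-admissibility of the quiver implies $G$-invariance by Definition~\ref{definition:admissible quiver}, and the condition on cluster variables $\psi(x_i)=\psi(x_{i'})$ is part of both definitions. For the reverse inclusion, let $\seed = (\mathbf x, \quiver)$ be a $(G,\psi)$-invariant seed occurring in the cluster pattern generated by $\initialseed$. Then $\quiver$ is a $G$-invariant quiver, and since every quiver in this cluster pattern is mutation equivalent to $\quiver_{t_0}$, it is in particular of type~$\dynX$; applying Theorem~\ref{thm_invaraint_implies_admissible} to $\quiver$ shows that $\quiver$ is $G$-admissible, and since the $(G,\psi)$-invariance already provides $\psi(x_i)=\psi(x_{i'})$ for $i\sim i'$, the seed $\seed$ is in fact $(G,\psi)$-admissible. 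Thus the set of $(G,\psi)$-invariant seeds coincides with the set of $(G,\psi)$-admissible seeds, and the conclusion of Proposition~\ref{prop_admissible_seeds_form_folded_seed_pattern} transfers directly: any $(G,\psi)$-invariant seed is reached from $\initialseed$ by a sequence of orbit mutations, and the collection of all such seeds forms the folded cluster pattern generated by $\initialseed^G$, which is of type~$\dynY$.

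There is one subtlety worth spelling out in the write-up: Theorem~\ref{thm_invaraint_implies_admissible} is a statement about quivers of type~$\dynX$, i.e.\ quivers mutation equivalent to an acyclic quiver with underlying Dynkin diagram~$\dynX$, so I must make sure that every quiver appearing in the cluster pattern of $\initialseed$ genuinely has type~$\dynX$ in that sense. This is immediate from the definition of ``type'' in Definition~\ref{definition:type} together with the fact that mutation equivalence is an equivalence relation: $\quiver_{t_0}$ is acyclic of type~$\dynX$ by hypothesis, every $\quiver_t$ is mutation equivalent to $\quiver_{t_0}$, hence every $\quiver_t$ is of type~$\dynX$. The only place where something could conceivably go wrong is the $\exdynA_{n,n}$ case, where the type remembers the orientation data $\{p,q\}$; but since mutation equivalence preserves this data, invariance of the abstract quiver type is not even needed — we only need that $\quiver_t$ is mutation equivalent to the specific acyclic quiver $\quiver(\exdynA_{n,n})$, which it is. I do not anticipate a genuine obstacle here; the whole argument is a matter of assembling Theorem~\ref{thm_invaraint_implies_admissible} and Proposition~\ref{prop_admissible_seeds_form_folded_seed_pattern} correctly, and the only ``hard part'' — the combinatorial equivalence of invariance and admissibility — has already been done in Section~\ref{section:proof of admissibility}.
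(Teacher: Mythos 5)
Your proposal is correct and follows essentially the same route as the paper: the paper derives the corollary directly by combining Theorem~\ref{thm_invaraint_implies_admissible} (to upgrade $(G,\psi)$-invariance to $(G,\psi)$-admissibility, the converse direction being immediate from Definition~\ref{definition:admissible quiver}) with Proposition~\ref{prop_admissible_seeds_form_folded_seed_pattern}. Your additional remarks on verifying the hypotheses (acyclicity via the standing assumption, preservation of type~$\dynX$ under mutation) are sound and merely make explicit what the paper leaves implicit.
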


\begin{remark}\label{rmk_Dupont}
Let $\quiver$ be an acyclic quiver which is globally foldable with respect to a 
finite group~$G$. Define $\psi \colon \field \to \field^G$ so that 
$\initialseed = (\mathbf x_{t_0}, \quiver)$ is $(G,\psi)$-admissible. Let 
$\seed_t = (\mathbf x_t, \quiver_t)$ be a $(G,\psi)$-invariant seed.
Dupont asked in~\cite[Problem~9.5]{Dupont08} that \textit{can $\seed_t$ be 
reached by sequences of orbit mutations from the initial seed 
$\initialseed$?} Corollary~\ref{cor_invariant_seeds_form_folded_pattern} 
implies that when the quiver $\quiver$ is of type~$\exdynA_{n,n},\exdynD_{n}, 
\exdynE_6$ or $\exdynE_7$ and with the specific choice of~$G$ (as in 
Table~\ref{table:all possible foldings}), we get an affirmative answer to the 
question proposed by Dupont.
\end{remark}

\appendix
\section{Group actions on Dynkin diagrams of affine type}
\label{appendix_actions_on_Dynkin_diagrams}
In the appendix, we provide group actions on Dynkin diagrams of affine type. 
More precisely, for each triple $(\dynX, G, \dynY)$ given by a column of 
Table~\ref{table:all possible foldings}, we describe the $G$-action of the 
Dynkin diagram of $\dynX$. Throughout this section, we denote by $\tau$ the 
generator of each finite group $\Z/2\Z$ or $\Z/3\Z$. 
For $i \in [n]$, we denote by $I_i$ the orbit $G \cdot i$.
We decorate vertices of Dynkin diagram of type~$\dynY$ with orbits $I_i$.
\begin{enumerate}[ref = (\arabic*)]
\item $(\dynX, G, \dynY) = (\exdynA_{2,2}, \Z/2\Z, \exdynA_1)$ \label{fig_A22_A1}
\begin{figure}[H]
\begin{minipage}[b]{.45\textwidth}
\centering
\begin{tikzpicture}[baseline=-.5ex]
\foreach \x in {1,...,4}{
\draw[fill] (90*\x:1) coordinate (A\x) {} circle (2pt);
}

\draw (A1) node[above] {$2$} 
(A2) node[left] {$1$}
(A3) node[below] {$3$}
(A4) node[right] {$4$};

\draw (A1) to (A2);
\draw (A2) to (A3);
\draw (A4) to (A3);
\draw (A1) to (A4);

\node[left = 0.5cm of A2] (L) {};
\node[right = 0.5cm of A4] (R) {};
\draw[dotted, blue, thick] (L) -- (R);
\node[blue] at (L) {$\tau \updownarrow$};

\node[above = 0.5cm of A1] (Above) {};
\node[below = 0.5cm of A3] (Below) {};
\draw[dotted, blue, thick] (Above) -- (Below);
\node[blue] at (Above) {$\leftrightarrow$};
\node[blue, above = 0.1cm of Above] {$\tau$};

\end{tikzpicture}
\end{minipage}
$\rightsquigarrow$
\begin{minipage}[b]{0.45\textwidth}
\centering
\begin{tikzpicture}[baseline=-.5ex]
\tikzstyle{state}=[draw, circle, inner sep = 1.4pt, fill]
\node[state, label = below:$I_1$] (1) {};
\node[state, label = below:$I_2$] (2) [right = of 1] {};

\draw[double line] (1)-- node[pos=0.2]{\scalebox{1}{ $<$}} 
node[pos=0.7]{\scalebox{1}{ $>$}} (2);

\end{tikzpicture}
\end{minipage}
\end{figure}

\item $(\dynX, G, \dynY) = (\exdynA_{n,n}, \Z/2\Z, \dynD_{n+1}^{(2)})$ \label{fig_Ann_Dn+1_2}
\begin{figure}[H]
\begin{minipage}[b]{.45\textwidth}
\centering
\begin{tikzpicture}[baseline=-.5ex]
\draw[fill] 
(-2.3,0) circle(2pt) coordinate (A1) {}
(-1.5,1) circle(2pt) coordinate (A2) {}
(0,1) circle(2pt) coordinate (A3) {}
(1.5,1) circle(2pt) coordinate (An) {}
(-1.5,-1) circle(2pt) coordinate (An+1) {}
(0,-1) circle(2pt) coordinate (An+2) {};
\draw[fill] 
(1.5,-1) circle(2pt) coordinate (A2n-1) {}
(2.3,0) circle(2pt) coordinate (A2n) {};
\draw (A1) node[above] {$1$} -- (A2);
\draw (A1) -- (An+1);
\draw[dashed] (A3) node[above] {$i$} -- (A2) node[above] {$2$};
\draw[dashed] (An+2) node[below] {$n+i-1$} -- (An+1) node[below] {$n+1$};
\draw[dashed] (A3) -- (An) node[above] {$n$};
\draw[dashed] (An+2) -- (A2n-1) node[below] {$2n-1$};
\draw (An) -- (A2n) node[above] {$2n$};
\draw (A2n-1) -- (A2n);

\node[left = 0.5cm of A1] (L) {};
\node[right = 0.5cm of A2n] (R) {};

\draw[dotted, blue, thick] (L) -- (R);
\node[blue] at (L) {$\tau \updownarrow$};
\end{tikzpicture}
\end{minipage} 
$\rightsquigarrow$
\begin{minipage}[b]{0.45\textwidth}
\centering
\begin{tikzpicture}[baseline=-.5ex]
\tikzstyle{state}=[draw, circle, inner sep = 1.4pt, fill]

\node[state, label=below:{$I_2$}] (1) {};
\node[state, label=below:{$I_3$}] (2) [right = of 1] {};
\node[state, label=below:{$I_{n-1}$}] (3) [right = of 2] {};
\node[state, label=below:{$I_n$}] (4) [right =of 3] {};
\node[state, label=below:{$I_{2n}$}] (5) [right =of 4] {};			
\node[state, label=below:{$I_1$}] (6) [left =of 1] {};
\draw (1)--(2)
(3)--(4);
\draw [dotted] (2)--(3);
\draw[double line] (4)-- node{\scalebox{1}{ $>$}} (5);
\draw[double line] (1)-- node{\scalebox{1}{ $<$}} (6);

\end{tikzpicture} 
\end{minipage}
\end{figure}

\item $(\dynX, G, \dynY) = (\exdynD_4, (\Z/2\Z)^2, \dynA_2^{(2)})$
\begin{figure}[H]
\begin{minipage}[b]{0.45\textwidth}
\centering
\begin{tikzpicture}[baseline=-.5ex]
\draw[fill] (0,0) coordinate (A1) {} circle (2pt)
(0,1) coordinate (A2) {} circle (2pt)
(-1,0) coordinate (A3) {} circle (2pt) 
(0,-1) coordinate (A4) {} circle (2pt) 
(1,0) coordinate (A5) {} circle (2pt);
\draw (A1) node[above right] {$1$} -- (A4) node[below] {$4$};
\draw (A1) -- (A2) node[above] {$2$};
\draw (A1) -- (A3) node[left] {$3$};
\draw (A1) -- (A5) node[right] {$5$};

\draw[->, dotted, thick, blue] (5:1) arc (0:85:1);
\draw[->, dotted, thick, blue] (95:1) arc (95:175:1);
\draw[->, dotted, thick, blue] (185:1) arc (185:265:1);
\draw[->, dotted, thick, blue] (275:1) arc (275:355:1);
\end{tikzpicture}
\end{minipage}
$\rightsquigarrow$
\begin{minipage}[b]{0.45\textwidth}
\centering
\begin{tikzpicture}[baseline=-.5ex]
\tikzstyle{state}=[draw, circle, inner sep = 1.4pt, fill]

\node[state, label=below:{$I_1$}] (1) {};
\node[state, label=below:{$I_2$}] (2) [right = of 1] {};

\draw[double distance = 2.7pt] (1)--(2);
\draw[double distance = 0.9pt] (1)-- node{\scalebox{1}{ $<$}} (2);
\end{tikzpicture}
\end{minipage}
\end{figure}
\item $(\dynX, G, \dynY) = (\exdynD_4, \Z/3\Z, 
\dynD_4^{(3)})$\label{fig_D4_D4_3}
\begin{figure}[H]
\begin{minipage}[b]{0.45\textwidth}
\centering
\begin{tikzpicture}[baseline=-.5ex]
\draw[fill] (0,0) coordinate (A1) {} circle (2pt)
(0,1) coordinate (A2) {} circle (2pt)
(-1,0) coordinate (A3) {} circle (2pt) 
(0,-1) coordinate (A4) {} circle (2pt) 
(1,0) coordinate (A5) {} circle (2pt);
\draw (A1) node[above right] {$1$} -- (A4) node[below] {$4$};
\draw (A1) -- (A2) node[above] {$2$};
\draw (A1) -- (A3) node[left] {$3$};
\draw (A1) -- (A5) node[right] {$5$};

\draw[->, dotted, thick, blue] (185:1) arc (185:265:1);
\draw[->, dotted, thick, blue] (275:1) arc (275:355:1);
\draw[<-,dotted, thick, blue, rounded corners] (A3) 
to[out=-50,in=180] (0,-0.5)
to[out=0,in=-130] (A5);
\end{tikzpicture}
\end{minipage}
$\rightsquigarrow$
\begin{minipage}[b]{0.45\textwidth}
\centering
\begin{tikzpicture}[baseline=-.5ex]
\tikzstyle{state}=[draw, circle, inner sep =1.4pt, fill]

\node[state, label=below:{$I_2$}] (1) {};
\node[state, fill=black, label=below:{$I_1$}] (2) [right = of 1] {};
\node[state, label=below:{$I_3$}] (3) [right=of 2] {};

\draw[triple line] (2)-- node{\scalebox{1}{$<$}} (3);
\draw (1)--(2);
\draw (2)--(3);
\end{tikzpicture}
\end{minipage}
\end{figure}
\item $(\dynX, G, \dynY) = (\exdynD_n, \Z/2\Z, 
\exdynC_{n-2})$\label{fig_Dn_Cn-2}
\begin{figure}[H]
\begin{minipage}[b]{0.45\textwidth}
\centering
\begin{tikzpicture}[baseline=-.5ex]
\draw[fill]
(0,0) circle(2pt) coordinate (A1) {}
(120:1) circle(2pt) coordinate (A2) {}
(240:1) circle(2pt) coordinate (A3) {}
(4,0) circle (2pt) coordinate (A2n-1) {}
(4,0) ++(60:1) circle(2pt) coordinate (A2n) {}
(4,0) ++(-60:1) circle(2pt) coordinate (A2n+1) {};
\draw[fill]
(1,0) circle(2pt) coordinate (A4) {}
(2,0) circle(2pt) coordinate (An+1) {}
(3,0) circle (2pt) coordinate (A2n-2) {};
\draw (A2) node[right] {$2$} -- 
(A1) node[left] {$1$};
\draw  (A3) node[right] {$3$} -- (A1);
\draw (A4) -- (A1);
\draw[dashed] (An+1) node[below] {} -- (A4) node[above] {$4$};
\draw[dashed] (A2n-2) node[above] {$n-2$} -- (An+1);
\draw (A2n-2) -- (A2n-1) node[right] {$n-1$};
\draw (A2n) node[left] {$n$} -- (A2n-1);
\draw (A2n+1) node[left] {$n+1$} -- (A2n-1);

\draw[<->, dotted, thick, blue] (A2n) to [out = -30, in = 30] (A2n+1);
\draw[<->, dotted, thick, blue] (A2) to [out = 210, in = -210] (A3);
\end{tikzpicture}
\end{minipage}
$\rightsquigarrow$
\begin{minipage}[b]{0.45\textwidth}
\centering
\begin{tikzpicture}[baseline=-.5ex]
\tikzstyle{state}=[draw, circle, inner sep = 1.4pt, fill]

\node[state, label=below:{$I_1$}] (1) {};
\node[state, label=below:{$I_4$}] (2) [right = of 1] {};
\node[state, label=below:{$I_{n-2}$}] (3) [right = of 2] {};
\node[state, label=below:{$I_{n-1}$}] (4) [right =of 3] {};
\node[state, label=below:{$I_n$}] (5) [right =of 4] {};			
\node[state, label=below:{$I_2$}] (6) [left =of 1] {};
\draw (1)--(2)
(3)--(4);
\draw [dotted] (2)--(3);
\draw[double line] (4)-- node{\scalebox{1}{ $<$}} (5);
\draw[double line] (1)-- node{\scalebox{1}{ $>$}} (6);

\end{tikzpicture}
\end{minipage}
\end{figure}
\item $(\dynX, G, \dynY) = (\exdynD_n, \Z/2\Z, 
\dynA_{2(n-1)-1}^{(2)})$\label{fig_Dn_A2n-1_2}
\begin{figure}[H]
\begin{minipage}[b]{0.45\textwidth}
\centering
\begin{tikzpicture}[baseline=-.5ex]
\draw[fill]
(0,0) circle(2pt) coordinate (A1) {}
(120:1) circle(2pt) coordinate (A2) {}
(240:1) circle(2pt) coordinate (A3) {}
(4,0) circle (2pt) coordinate (A2n-1) {}
(4,0) ++(60:1) circle(2pt) coordinate (A2n) {}
(4,0) ++(-60:1) circle(2pt) coordinate (A2n+1) {};
\draw[fill]
(1,0) circle(2pt) coordinate (A4) {}
(2,0) circle(2pt) coordinate (An+1) {}
(3,0) circle (2pt) coordinate (A2n-2) {};
\draw (A2) node[right] {$2$} -- 
(A1) node[left] {$1$};
\draw  (A3) node[right] {$3$} -- (A1);
\draw (A4) -- (A1);
\draw[dashed] (An+1) node[below] {} -- (A4) node[above] {$4$};
\draw[dashed] (A2n-2) node[above] {$n-2$} -- (An+1);
\draw (A2n-2) -- (A2n-1) node[right] {$n-1$};
\draw (A2n) node[left] {$n$} -- (A2n-1);
\draw (A2n+1) node[left] {$n+1$} -- (A2n-1);

\draw[<->, dotted, thick, blue] (A2n) to [out = -30, in = 30] (A2n+1);
\end{tikzpicture}
\end{minipage}
$\rightsquigarrow$
\begin{minipage}[b]{0.45\textwidth}
\centering
\begin{tikzpicture}[baseline=-.5ex]
\tikzstyle{state}=[draw, circle, inner sep = 1.4pt, fill]

\node[state, fill=black,  label=below:{$I_1$}] (1) {};
\node[state, label=below:{$I_4$}] (2) [right = of 1] {};
\node[state, label=below:{$I_{n-2}$}] (3) [right = of 2] {};
\node[state, label=below:{$I_{n-1}$}] (4) [right =of 3] {};
\node[state, label=below:{$I_n$}] (5) [right =of 4] {};
\node[state, label=left:{$I_3$}] (6) [below left = 0.6cm and 0.6cm of 1] {};
\node[state, label=left:{$I_2$}] (7) [above left = 0.6cm and 0.6cm of 1] {};	

\draw (1)--(2)
(3)--(4)
(6)--(1)--(7);
\draw [dotted] (2)--(3);
\draw[double line] (4)-- node{\scalebox{1}{ $<$}} (5);

\end{tikzpicture}
\end{minipage}
\end{figure}
\item $(\dynX, G, \dynY) = (\exdynD_{2n}, \Z/2\Z, \exdynB_n)$\label{fig_D2n_Bn}
\begin{figure}[H]
\begin{minipage}[b]{0.45\textwidth}
\centering
\begin{tikzpicture}[baseline=-.5ex]
\draw[fill]
(0,0) circle(2pt) coordinate (A1) {}
(120:1) circle(2pt) coordinate (A2) {}
(240:1) circle(2pt) coordinate (A3) {}
(4,0) circle (2pt) coordinate (A2n-1) {}
(4,0) ++(60:1) circle(2pt) coordinate (A2n) {}
(4,0) ++(-60:1) circle(2pt) coordinate (A2n+1) {};
\draw[fill]
(1,0) circle(2pt) coordinate (A4) {}
(2,0) circle(2pt) coordinate (An+1) {}
(3,0) circle (2pt) coordinate (A2n-2) {};
\draw (A2) node[right] {$2$} -- (A1) node[left] {$1$};
\draw  (A3) node[right] {$3$} -- (A1);
\draw (A4) -- (A1);
\draw[dashed] (An+1) node[below] {$n+1$} -- (A4) node[above] {$4$};
\draw[dashed] (A2n-2) node[above] {$2n-2$} -- (An+1);
\draw (A2n-2) -- (A2n-1) node[right] {$2n-1$};
\draw (A2n) node[left] {$2n$} -- (A2n-1);
\draw (A2n+1) node[left] {$2n+1$} -- (A2n-1);

\draw[->, dotted, thick, blue] ([shift=(An+1)]30:1) arc (30:150:1) ;
\draw[->, dotted, thick, blue] ([shift=(An+1)]210:1) arc (210:330:1) ;

\end{tikzpicture}
\end{minipage}
$\rightsquigarrow$
\begin{minipage}[b]{0.45\textwidth}
\centering
\begin{tikzpicture}[baseline=-.5ex]
\tikzstyle{state}=[draw, circle, inner sep = 1.4pt, fill]

\node[state, label=below:{$I_1$}] (1) {};
\node[state, label=below:{$I_4$}] (2) [right = of 1] {};
\node[state, label=below:{$I_{n-1}$}] (3) [right = of 2] {};
\node[state, label=below:{$I_n$}] (4) [right =of 3] {};
\node[state, label=below:{$I_{n+1}$}] (5) [right =of 4] {};
\node[state, label=left:{$I_3$}] (6) [below left = 0.6cm and 0.6cm of 1] {};
\node[state, label=left:{$I_2$}] (7) [above left = 0.6cm and 0.6cm of 1] {};

\draw (1)--(2)
(3)--(4)
(6)--(1)--(7);
\draw [dotted] (2)--(3);
\draw[double line] (4)-- node{\scalebox{1}{ $>$}} (5);

\end{tikzpicture}
\end{minipage}
\end{figure}

\item $(\dynX, G, \dynY) = (\exdynD_{2n}, (\Z/2\Z)^2, \dynA_{2n-2}^{(2)})$
\label{fig_D2n_A2n-2_2}
\begin{figure}[H]
\begin{minipage}[b]{0.45\textwidth}
\centering
\begin{tikzpicture}[baseline=-.5ex]
\draw[fill]
(0,0) circle(2pt) coordinate (A1) {}
(120:1) circle(2pt) coordinate (A2) {}
(240:1) circle(2pt) coordinate (A3) {}
(4,0) circle (2pt) coordinate (A2n-1) {}
(4,0) ++(60:1) circle(2pt) coordinate (A2n) {}
(4,0) ++(-60:1) circle(2pt) coordinate (A2n+1) {};
\draw[fill]
(1,0) circle(2pt) coordinate (A4) {}
(2,0) circle(2pt) coordinate (An+1) {}
(3,0) circle (2pt) coordinate (A2n-2) {};
\draw (A2) node[right] {$2$} -- (A1) node[left] {$1$};
\draw (A3) node[right] {$3$} -- (A1);
\draw (A4) -- (A1);
\draw[dashed] (An+1) node[below] {$n+1$} -- (A4) node[above] {$4$};
\draw[dashed] (A2n-2) node[above] {$2n-2$} -- (An+1);
\draw (A2n-2) -- (A2n-1) node[right] {$2n-1$};
\draw (A2n) node[left] {$2n$} -- (A2n-1);
\draw (A2n+1) node[left] {$2n+1$} -- (A2n-1);

\node[above = 1cm of An+1] (Above) {};
\node[below = 1cm of An+1] (Below) {};
\draw[dotted, blue, thick] (Above)--(Below);
\node[blue] at (Above) {$\leftrightarrow$};
\node[blue, above = 0.1cm of Above] {$\tau_1$};

\node[left = 1cm of A1] (L) {};
\node[right = 1cm of A2n-1] (R) {};

\draw[dotted, blue, thick] (L) -- (R);
\node[blue] at (L) {$\tau_2 \updownarrow$};
\end{tikzpicture}
\end{minipage}
$\rightsquigarrow$
\begin{minipage}[b]{0.45\textwidth}
\centering
\begin{tikzpicture}[baseline=-.5ex]
\tikzstyle{state}=[draw, circle, inner sep = 1.4pt, fill]

\node[state, label=below:{$I_1$}, fill=black] (1) {};
\node[state, label=below:{$I_4$}] (2) [right = of 1] {};
\node[state, label=below:{$I_{n-1}$}] (3) [right = of 2] {};
\node[state, label=below:{$I_n$}] (4) [right =of 3] {};
\node[state, label=below:{$I_{n+1}$}] (5) [right =of 4] {};			
\node[state, label=below:{$I_2$}] (6) [left =of 1] {};
\draw (1)--(2)
(3)--(4);
\draw [dotted] (2)--(3);
\draw[double line] (4)-- node{\scalebox{1}{ $>$}} (5);
\draw[double line] (1)-- node{\scalebox{1}{ $>$}} (6);	
\end{tikzpicture}
\end{minipage}
\end{figure}
\item $(\dynX, G, \dynY) = (\exdynE_6, \Z/3\Z, \exdynG_2)$\label{fig_E6_Z3}
\begin{figure}[H]
\begin{minipage}[b]{0.45\textwidth}
\centering
\begin{tikzpicture}[baseline=-.5ex]	
\draw[fill] (0,0) circle (2pt) coordinate (A1) {} 
(60:2) circle (2pt) coordinate (A3) {} 
(180:2) circle (2pt) coordinate (A5) {} 
(300:2) circle (2pt) coordinate (A7) {};
\draw[fill] (60:1) circle (2pt) coordinate (A2) {} 
(180:1) circle (2pt) coordinate (A4) {} 
(300:1) circle (2pt) coordinate (A6) {};
\draw (A1) node[above left] {$1$} -- (A2) node[above left] {$2$};
\draw (A1) -- (A4) node[above left] {$4$};
\draw (A1) -- (A6) node[right] {$6$};
\draw (A3) node[above left] {$3$} -- (A2);
\draw (A5) node[above left] {$5$} -- (A4);
\draw (A7) node[right] {$7$} -- (A6);
\draw[->, dotted, thick, blue] (A1) edge[loop right] (A1);
\draw[->, dotted, thick, blue] (70:1) arc (70:170:1) node[midway, above left] 
{};
\draw[->, dotted, thick, blue] (65:2) arc (65:175:2) node[midway, above left] 
{};
\draw[->, dotted, thick, blue] (190:1) arc (190:290:1) node[midway, below left] 
{};
\draw[->, dotted, thick, blue] (185:2) arc (185:295:2) node[midway, below left] 
{};
\draw[->, dotted, thick, blue] (310:1) arc (310:410:1) node[midway, right] {};
\draw[->, dotted, thick, blue] (305:2) arc (305:415:2) node[midway, right] {};
\end{tikzpicture}
\end{minipage}
$\rightsquigarrow$
\begin{minipage}[b]{0.45\textwidth}
\centering
\begin{tikzpicture}[baseline=-.5ex]
\tikzstyle{state}=[draw, circle, inner sep = 1.4pt, fill]

\node[state, label=below:{$I_1$}] (1){};
\node[state, label=below:{$I_2$}] (2) [right = of 1] {};
\node[state, label=below:{$I_3$}] (3) [right=of 2] {};

\draw[triple line] (1)-- node{\scalebox{1}{$<$}} (2);
\draw (1)--(2);
\draw (2)--(3);
\end{tikzpicture}
\end{minipage}
\end{figure}
\item $(\dynX, G, \dynY) = (\exdynE_6, \Z/2\Z, 
\dynE_6^{(2)})$\label{fig_E6_E6_2}
\begin{figure}[H]
\begin{minipage}[b]{0.45\textwidth}
\centering
\begin{tikzpicture}[baseline=-.5ex]
\draw[fill] (0,0) circle (2pt) coordinate (A1) {} 
(90:2) circle (2pt) coordinate (A3) {} 
(180:2) circle (2pt) coordinate (A5) {} 
(0:2) circle (2pt) coordinate (A7) {};
\draw[fill] (90:1) circle (2pt) coordinate (A2) {} 
(180:1) circle (2pt) coordinate (A4) {}
(0:1) circle (2pt) coordinate (A6) {};
\draw (A1) node[above right] {$1$} -- (A2) node[left] {$2$};
\draw (A1) -- (A4) node[above] {$4$};
\draw (A1) -- (A6) node[above] {$6$};
\draw (A3) node[left] {$3$} -- (A2);
\draw (A5) node[above] {$5$} -- (A4);
\draw (A7) node[above] {$7$} -- (A6);

\node[above = 0.5cm of A3] (Above) {};
\node[below = 0.5cm of A1] (Below) {};

\draw[dotted, blue, thick] (Above)--(Below) {};
\node[blue] at (Above) {$\leftrightarrow$};
\node[blue, above = 0.1cm of Above] {$\tau$};

\end{tikzpicture}
\end{minipage}
$\rightsquigarrow$
\begin{minipage}[b]{0.45\textwidth}
\centering
\begin{tikzpicture}[baseline=-.5ex]
\tikzstyle{state}=[draw, circle, inner sep = 1.4pt, fill]

\node[state,label=below:{$I_3$}] (1) {};
\node[state,label=below:{$I_2$},fill=black] (2) [right = of 1] {};
\node[state,label = below:{$I_1$}] (3) [right = of 2] {};
\node[state,label = below:{$I_4$},fill=black] (4) [right=of 3] {};
\node[state,label = below:{$I_5$}] (5) [right=of 4] {};

\draw (1)--(2)
(2)--(3)
(4)--(5);
\draw[double line] (3)-- node{\scalebox{1}{$<$}} (4);

\end{tikzpicture}
\end{minipage}
\end{figure}
\item $(\dynX, G, \dynY) = (\exdynE_7, \Z/2\Z, \exdynF_4)$\label{fig_E7_F4}
\begin{figure}[H]
\begin{minipage}[b]{0.45\textwidth}
\centering
\begin{tikzpicture}[baseline=-.5ex]
\draw[fill] (0,0) circle (2pt) coordinate (A1) {} 
(180:2) circle (2pt) coordinate (A4) {} 
(0:2) circle (2pt) coordinate (A7) {};
\draw[fill] (90:1) circle (2pt) coordinate (A2) {} 
(180:1) circle (2pt) coordinate (A3) {} 
(180:3) circle (2pt) coordinate (A5) {} 
(0:1) circle (2pt) coordinate (A6) {} 
(0:3) circle (2pt) coordinate (A8) {};
\draw (A1) node[above right] {$1$} -- (A2) node[left] {$2$};
\draw (A1) -- (A3) node[above] {$3$};
\draw (A1) -- (A6) node[above] {$6$};
\draw (A4) node[above] {$4$} -- (A3);
\draw (A4) -- (A5) node[above] {$5$};
\draw (A7) node[above] {$7$} -- (A6);
\draw (A7) -- (A8) node[above] {$8$};

\node[above = 0.5cm of A2] (Above) {};
\node[below = 0.5cm of A1] (Below) {};

\draw[dotted, blue, thick] (Above)--(Below) {};
\node[blue] at (Above) {$\leftrightarrow$};
\node[blue, above = 0.1cm of Above] {$\tau$};

\end{tikzpicture}
\end{minipage}
$\rightsquigarrow$
\begin{minipage}[b]{0.45\textwidth}
\centering
\begin{tikzpicture}[baseline=-.5ex]
\tikzstyle{state}=[draw, circle, inner sep = 1.4pt, fill]

\node[state,fill=black,  label=below:{$I_5$}] (1) {};
\node[state, label=below:{$I_4$}] (2) [right = of 1] {};
\node[state, fill=black, label=below:{$I_3$}] (3) [right = of 2] {};
\node[state, label=below:{$I_1$}] (4) [right =of 3] {};
\node[state, fill=black, label=below:{$I_2$}] (5) [right =of 4] {};

\draw (1)--(2)
(2)--(3)
(4)--(5);
\draw[double line] (3)-- node{\scalebox{1}{$>$}} (4);

\end{tikzpicture}
\end{minipage}
\end{figure}
\end{enumerate}


\end{document}